\documentclass[12pt]{amsart}
\usepackage{fullpage}
\usepackage{xspace, xcolor}
\usepackage{mathtools, enumitem}
\usepackage{cite, thmtools, thm-restate}
\usepackage{mathrsfs, amssymb}
\usepackage[T1]{fontenc}
\usepackage[colorlinks, unicode, pdfencoding=auto, hypertexnames=false]{hyperref}
\usepackage{cleveref}
\usepackage{tikz-cd}

\theoremstyle{plain}
\newtheorem{theorem}{Theorem}[section]
\newtheorem{proposition}[theorem]{Proposition}
\newtheorem{lemma}[theorem]{Lemma}
\newtheorem{corollary}[theorem]{Corollary}

\theoremstyle{definition}
\newtheorem{definition}[theorem]{Definition}

\theoremstyle{remark}
\newtheorem{remark}[theorem]{Remark}
 
\newcommand{\et}{\mathrm{\acute{e}t}}
\newcommand{\ab}{\mathrm{ab}}
\newcommand{\tor}{\mathrm{tor}}
\newcommand{\red}{\mathrm{red}}
\newcommand{\dual}{\vee}

\DeclareMathOperator{\Pic}{\mathbf{Pic}}
\DeclareMathOperator{\pic}{\mathrm{Pic}}
\DeclareMathOperator{\Alb}{\mathbf{Alb}}
\DeclareMathOperator{\Hilb}{\mathbf{Hilb}}
\DeclareMathOperator{\Div}{\mathbf{Div}}
\DeclareMathOperator{\Gr}{\mathbf{Gr}}
\DeclareMathOperator{\NNS}{\mathbf{NS}}
\DeclareMathOperator{\NS}{NS}
\newcommand{\mmu}{\boldsymbol{\mu}}


\DeclareMathOperator{\GL}{\mathbf{GL}}
\DeclareMathOperator{\Mat}{\mathbf{Mat}}
\DeclareMathOperator{\codim}{codim}
\DeclareMathOperator{\Fitt}{Fitt}
\DeclareMathOperator{\sgn}{sgn}
\DeclareMathOperator{\Aut}{Aut}
\DeclareMathOperator{\Hom}{Hom}

\DeclareMathOperator{\Spec}{Spec}

\DeclareMathOperator{\im}{im}
\DeclareMathOperator{\id}{id}
\DeclareMathOperator{\charac}{char}
\DeclareMathOperator{\divisor}{div}

\newcommand{\cat}{\mathbin{\smallfrown}}
\newcommand{\Inc}[2]{\operatorname{Inc}(#1, #2)}


\begin{document}
\title{Computing Picard Schemes}

\author{Hyuk Jun Kweon}
\address{Department of Mathematics, Seoul National University, South Korea}
\email{kweon7182@snu.ac.kr}

\author{Madhavan Venkatesh}
\address{Max Planck Institute for Software Systems, Saarbr\"ucken, Germany}
\email{madhavan@mpi-sws.org}
\begin{abstract}
    We present an algorithm to compute the torsion component $\Pic^\tau X$ of the Picard scheme of a smooth projective variety $X$ over a field $k$. Specifically, we describe $\Pic^\tau X$ as a closed subscheme of a projective space defined by explicit homogeneous polynomials. Furthermore, we compute the group scheme structure on $\Pic^\tau X$. As applications, we provide algorithms to compute various homological invariants. Among these, we compute the abelianization of the geometric \'etale fundamental group $\pi^\et_1(X_{\bar{k}}, x)^{\ab}$. Moreover, we determine the Galois module structure of the first \'etale cohomology groups $H^1_{\et}(X_{\bar{k}}, \mathbb{Z}/n\mathbb{Z})$ without requiring $n$ to be prime to the characteristic of $k$.
\end{abstract}

\maketitle

\section{Introduction}

The Picard scheme $\Pic X$, established by Grothendieck \cite{Grothendieck, Grothendieck2}, is the moduli space parametrizing line bundles on a variety $X$. Just as the Jacobian plays a central role in the arithmetic of curves, its higher-dimensional generalization $\Pic X$ is essential in the study of higher-dimensional varieties. We refer the reader to Kleiman \cite{Kle2} for a comprehensive exposition. Despite its fundamental importance, the classical existence proofs for the Picard scheme are highly non-constructive.

Throughout this paper, we assume that $X$ is a smooth connected projective variety over a field $k$. To ensure computability, we also assume that $k$ is finitely generated over its prime field, either $\mathbb{Q}$ or $\mathbb{F}_p$. This entails no loss of generality. Indeed, $X$ descends to a model $X_0$ over a finitely generated subfield $k_0 \subset k$ generated by the coefficients of the homogeneous polynomials defining $X$. Because the formation of the Picard scheme commutes with base change, the computation can be performed over $k_0$.

Let $\Pic^\tau X \subset \Pic X$ be the torsion component. This projective group scheme parametrizes numerically trivial line bundles, or equivalently, line bundles whose classes are torsion in the N\'eron-Severi group. Our first main result is the effective construction of this moduli space.

\begin{restatable*}{theorem}{PicTau}\label{thm:Pic_tau}
  There exists an explicit algorithm to compute the homogeneous equations defining $\Pic^\tau X$ as a closed subscheme of a projective space.
\end{restatable*}

Since the Picard scheme parametrizes line bundles, the tensor product of line bundles endows it with a group scheme structure. Our second main result is the computation of this structure. By computing a morphism $f: Y \to Z$ between projective schemes, we mean computing the polynomials defining its graph $\Gamma_f \subset Y \times Z$. We apply this to the addition, inverse, and identity morphisms.

\begin{restatable*}{theorem}{PicTauGroup}\label{thm:group_structure}
  There exists an explicit algorithm to compute the group scheme structure on $\Pic^\tau X$, consisting of the addition $\alpha$, the inverse $\iota$, and the identity section $\epsilon$.
\end{restatable*}

Conceptually, $\Pic^\tau X$ represents the dual of the universal integral first homology group of $X$. Consequently, any object derived from a well-behaved integral first homology theory can be recovered from $\Pic^\tau X$. As a major application, we present an algorithm to compute the abelianization $\pi^\et_1(X_{\bar{k}}, x)^\ab$ of the geometric \'etale fundamental group. The most challenging aspect of this group is the $p$-power torsion subgroup in characteristic $p > 0$. 

\begin{restatable*}{theorem}{firstHomology}\label{thm:first_homology}
    There exists an explicit algorithm to compute the structure of the profinite abelian group $\pi^\et_1(X_{\bar{k}}, x)^\ab$.
\end{restatable*}

Moreover, we address the computation of the first \'etale cohomology groups $H_{\et}^1(X_{\bar{k}}, \mathbb{Z}/n\mathbb{Z})$. The difficulty here lies in the case where $n$ is not coprime to the characteristic of $k$. We demonstrate that our method allows for the computation of these groups for arbitrary $n$, including the explicit description of the action of the absolute Galois group of $k$.

\begin{restatable*}{theorem}{firstCohomology}
\label{thm:first_cohomology}
    For any integer $n > 0$, there exists an algorithm to compute the $\Aut(\bar{k}/ k)$-module $H_{\et}^1(X_{\bar{k}}, \mathbb{Z}/n\mathbb{Z})$. Specifically, the algorithm determines the following.
    \begin{enumerate}
        \item The finite abelian group $H_{\et}^1(X_{\bar{k}}, \mathbb{Z}/n\mathbb{Z})$.
        \item A finite extension $L$ of $k$ such that the $\Aut(\bar{k}/k)$-action factors through $\Aut(L/k)$.
        \item The action of the finite group $\Aut(L/k)$ on $H_{\et}^1(X_{\bar{k}}, \mathbb{Z}/n\mathbb{Z})$.
    \end{enumerate}
\end{restatable*}

We emphasize that the scheme structure established by Grothendieck is indispensable for these computations, particularly when $\charac k = p > 0$. Igusa \cite{Igusa} demonstrated that $\Pic X$ can be non-reduced in positive characteristic. Far from being a mere pathology, this non-reducedness encodes essential $p$-torsion data of both $\pi^\et_1(X_{\bar{k}}, x)^{\ab}$ and $H_{\et}^1(X_{\bar{k}}, \mathbb{Z}/N\mathbb{Z})$. Consequently, this cohomological information cannot be derived from the abstract Picard group $\pic X_{\bar{k}}$, nor from Matsusaka's Picard variety $(\Pic^0 X)_\red$ \cite{Matsusaka} together with the geometric N\'eron-Severi group $\NS X_{\bar{k}}$.

In addition to these main applications, we provide algorithms for the Albanese variety $\Alb X$ and the torsion subgroup scheme $(\NNS X)_\tor$. Furthermore, we compute the finite quotients $\pi^\et_1(X_{\bar{k}}, x)^{\ab}/n\pi^\et_1(X_{\bar{k}}, x)^{\ab}$ and the flat cohomology groups $H^1_{\mathrm{fppf}}(X_{\bar{k}}, \mmu_n)$, including their explicit Galois module structures.

A primary obstacle to computing the full Picard scheme $\Pic X$ is determining the geometric N\'eron-Severi group $\NS X_{\bar{k}}$. The current best algorithm, proposed by Poonen, Testa, and van Luijk \cite{PTL}, relies conditionally on the Tate conjecture. Despite this, we believe our methods can compute each component individually. Specifically, for any line bundle $\mathcal{L}$ defined over a finite extension of $k$, it should be feasible to compute the components parametrizing line bundles numerically equivalent to one of the Galois conjugates of $\mathcal{L}$. Nevertheless, we restrict our focus to $\Pic^\tau X$ to avoid excessive complexity. We remark that if $k$ is algebraically closed, $\Pic X$ is simply the disjoint union of copies of $\Pic^\tau X$ indexed by the quotient $\NS X / (\NS X)_\tor$.

Our main technical tool is an explicit and systematic use of Grassmannians. We describe closed subschemes of Grassmannians in Stiefel coordinates and convert the resulting equations into Pl\"ucker coordinates. Moreover, to determine the explicit bounds required for our work, we utilize the Gotzmann Persistence Theorem \cite[Satz]{Got} and related results. Beyond the specific results of this work, we expect that our systematic approach can be extended to prove the computability of numerous other moduli spaces.

The paper is organized as follows. \Cref{sec:grassmannian} reviews Stiefel and Pl\"ucker coordinates on Grassmannians and presents an algorithm to convert equations given in Stiefel coordinates into Pl\"ucker coordinates. \Cref{sec:numerical} establishes the numerical bounds required for our work. \Cref{sec:Div_mH_X} details the computation of the moduli space of effective Cartier divisors numerically equivalent to $mH$, where $H$ is a hyperplane section of $X$. In \Cref{sec:Pic_tau_X}, we construct $\Pic^\tau X$ as a quotient, compute its defining equations, and determine its group structure. Finally, \Cref{sec:applications} discusses applications to \'etale fundamental groups and cohomology.

\section{Notation}

Given a field $k$, let $\bar{k}$ be an algebraic closure of $k$. Given a $k$-algebra $R$, let $Y_R \coloneqq Y \times_{\Spec k} \Spec R$ and $V_R \coloneqq V \otimes_k R$ denote the base changes of a scheme $Y$ and a vector space $V$ over $k$, respectively. We use the superscript $^\dual$ to denote the dual of various objects: the dual vector space, the Pontryagin dual of a locally compact group, the Cartier dual of a finite group scheme, or the dual abelian variety. For a graded module $M$, let $M_t$ denote its graded component of degree $t$.

Let $Z \subset X$ be a closed subscheme of $X$. We denote by $\mathscr{I}_{Z/X}$ the ideal sheaf of $Z$ in $X$. We define the saturated ideal $I_{Z/X}$ and the coordinate ring $S_{Z/X}$ by
\[
    I_{Z/X} = \bigoplus_{t \geq 0} H^0(X, \mathscr{I}_{Z/X}(t)) \quad \text{and} \quad S_{Z/X} = \bigoplus_{t \geq 0} H^0(Z, \mathcal{O}_Z(t)),
\]
respectively. When $X = \mathbb{P}^r$, we omit the subscript $/X$ and simply write $\mathscr{I}_Z$, $I_Z$, and $S_Z$. In the specific case where $Z = \mathbb{P}^r$, we denote the homogeneous coordinate ring by $S = k[x_0, \dots, x_r]$.

For a coherent sheaf $\mathcal{F}$, we denote by $\chi(\mathcal{F})$ its Euler characteristic. For a closed subscheme $Z \hookrightarrow \mathbb{P}^r$, we denote by $P_Z(s) = \chi(\mathcal{O}_Z(s))$ its Hilbert polynomial and by $Q_Z(s) = \chi(\mathscr{I}_Z(s))$ the Hilbert polynomial of the ideal sheaf. These polynomials satisfy the relation
\[
    P_Z(s) + Q_Z(s) = \binom{s+r}{r}.
\]

We denote by $\sim$ the linear equivalence of divisors and by $\equiv$ numerical equivalence. For a global section $f$ of a line bundle, we denote by $\divisor f$ its associated divisor. Similarly, for a rational function $f/g$ in the total quotient ring, we define $\divisor(f/g) \coloneqq \divisor f - \divisor g$.

For a vector space $V$, we denote by $\mathbb{P}(V)$ the projective space of one-dimensional subspaces of $V$. We denote by $\Gr(d, V)$ the Grassmannian of $d$-dimensional subspaces of $V$, and write $\Gr(d, n) \coloneqq \Gr(d, k^n)$. We denote by $\Hilb_P X$ and $\Div_P X$ the Hilbert scheme and the moduli space of effective Cartier divisors on $X$ with Hilbert polynomial $P$, respectively. Given a polynomial $Q$, we define $\Hilb^Q X \coloneqq \Hilb_P X$ and $\Div^Q X \coloneqq \Div_P X$, where $P$ is determined by the relation
\[
    P(s) + Q(s) = \binom{s+r}{r}.
\]
When regarding these schemes as subschemes of $\Gr(d, V)$, for an $R$-point $D$, we denote by $[D] \subset V_R$ the corresponding submodule.

For a divisor $D$, we denote by $\Div_D X$ and $\Pic_D X$ the subschemes parametrizing effective divisors and line bundles numerically equivalent to $D$ and $\mathcal{O}_X(D)$, respectively. We denote by $\Pic X$ the Picard scheme of $X$, by $\Pic^\tau X$ the torsion component parametrizing numerically trivial line bundles, and by $\Pic^0 X$ the identity component parametrizing algebraically trivial line bundles. We denote by $\NS X$ the N\'eron-Severi group of $X$.

Let $\mmu_n$ be the group scheme of $n$-th roots of unity. We denote by $\GL_n$ the general linear group scheme of degree $n$. Let $\Mat_{n \times m}$ denote the scheme of $n \times m$ matrices, and we write $\Mat_n$ for the scheme of $n \times n$ square matrices.

For a non-negative integer $n$, we set $[n] \coloneqq \{0, \dots, n-1\}$. Let $\Inc{d}{n}$ be the set of strictly increasing sequences $\alpha \colon [d] \to [n]$. Let $M = (m_{i,j})$ be an $n \times m$ matrix. Given a sequence of row indices $\alpha \colon [a] \to [n]$ and column indices $\beta \colon [b] \to [m]$, we define the $a \times b$ submatrix $M_{\alpha, \beta}$ by
\[
    M_{\alpha, \beta} = (m_{\alpha(i), \beta(j)})_{i \in [a], \, j \in [b]}.
\]
For a set $A$, let $\id_A$ denote the identity map. We define the submatrices formed by the rows indexed by $\alpha$ and the columns indexed by $\beta$ as $M_{\alpha, \bullet} \coloneqq M_{\alpha, \id_{[m]}}$ and $M_{\bullet, \beta} \coloneqq M_{\id_{[n]}, \beta}$, respectively.

For a morphism $f \colon X \to Y$, we denote by $\Gamma_f \subset X \times Y$ its graph. Let $g \colon Y \to Z$ be another morphism. The graph of the composite morphism $g \circ f$ is obtained scheme-theoretically as
\[
    \Gamma_{g \circ f} = \pi_{X, Z}\left( (\Gamma_f \times Z) \cap (X \times \Gamma_g) \right),
\]
where the intersection is taken in $X \times Y \times Z$, and $\pi_{X, Z} \colon X \times Y \times Z \to X \times Z$ denotes the natural projection.

Throughout the paper, let $X \hookrightarrow \mathbb{P}^r$ be a smooth connected projective variety over a field $k$, and let $H$ denote a hyperplane section of $X$. Unless otherwise specified, $R$ denotes a local $k$-algebra. In \Cref{sec:numerical}, however, we relax this assumption and allow $R$ to be an arbitrary commutative $k$-algebra.

We assume that $k$ is a finitely generated field over the prime field $\mathbb{F}$ which is $\mathbb{Q}$ or $\mathbb{F}_p$. Explicitly, we assume that $k$ is presented as
\[
    k \cong \mathbb{F}(x_0, \dots, x_{a-1})[y_0, \dots, y_{b-1}]/\mathfrak{m},
\]
where $\mathfrak{m}$ is a maximal ideal of the polynomial ring $\mathbb{F}(x_0, \dots, x_{a-1})[y_0, \dots, y_{b-1}]$. Under this assumption, there exist explicit algorithms for computing the primary decomposition \cite{Steel, GTZ} and the radical \cite{KrickLogar, Matsumoto} of an ideal, as well as for the factorization of univariate polynomials over $k$ \cite{Steel, DavenportTrager}.

\section{Preliminaries on Grassmannians}\label{sec:grassmannian}

The goal of this section is to recall Stiefel and Pl\"ucker coordinates and to explain how to rigorously define closed subschemes of the Grassmannian $\Gr(d,n)$ and products of Grassmannians using Stiefel coordinates. Furthermore, we present an algorithm to convert the defining equations from Stiefel coordinates to Pl\"ucker coordinates. The existence of this canonical conversion is the only essential prerequisite for the subsequent sections. Thus, readers willing to accept this result may skip the technical details and proceed directly to \Cref{sec:numerical}. We conclude the section by providing an auxiliary lemma, which will be required later. Unless otherwise stated, $R$ denotes a local $k$-algebra throughout the section.


\subsection{Stiefel and Pl\"ucker Coordinates}
Closed subschemes of $\Gr(d,n)$ can generally be described using Pl\"ucker coordinates via the Pl\"ucker embedding $\Gr(d,n)\hookrightarrow \mathbb P(\bigwedge^d k^n)$. However, this approach has two drawbacks. First, the number of Pl\"ucker coordinates is $\binom{n}{d}$, which is often too large. Second, given Pl\"ucker coordinates, it is typically cumbersome to recover the corresponding rank $d$ submodule. Stiefel coordinates resolve both issues. They reduce the number of variables to $dn$, and the column space $\im S$ of the Stiefel matrix $S$ directly represents the rank $d$ submodule.

\begin{definition}
Let $R$ be a local $k$-algebra and let $M\in \Gr(d,n)(R)$. A \emph{Stiefel matrix} at $M$ is a matrix over $R$
\[ 
S = (s_{i,j})_{i\in[n],\, j\in[d]} = \begin{pmatrix} 
s_{0,0} & s_{0,1} & \cdots & s_{0,d-1} \\
s_{1,0} & s_{1,1} & \cdots & s_{1,d-1} \\
\vdots& \vdots& \ddots & \vdots\\
s_{n-1,0} & s_{n-1,1} & \cdots & s_{n-1,d-1} 
\end{pmatrix}
\]
such that $\im S = M$. The entries of $S$ are called the \emph{Stiefel coordinates}.
\end{definition}

Since $R$ is local, every $M \in \Gr(d,n)(R)$ is free and thus admits a Stiefel matrix. Conversely, a matrix $S \in \Mat_{n \times d}(R)$ represents some $M \in \Gr(d,n)(R)$ if and only if at least one $d \times d$ minor of $S$ is a unit. Two Stiefel matrices $S_0$ and $S_1$ represent the same module if and only if $S_1 = S_0 \cdot B$ for some $B \in \GL_d(R)$. Note that when $d=1$, we have $\Gr(1,n) = \mathbb{P}^{n-1}$, and the Stiefel coordinates coincide with the standard projective coordinates.

As with projective coordinates, if $M \in \Gr(d,n)(R)$ is not specified, we regard the symbols $s_{i,j}$ as formal indeterminates. In this setting, we define the polynomial ring
\[k[s] \coloneqq k\big[s_{i,j} \,\big|\, i \in [n], \, j \in [d]\big].\]
We view the argument of a polynomial $f \in k[s]$ as an $n \times d$ matrix. That is, for an $n \times d$ matrix $A = (a_{i,j})$, the value $f(A)$ is obtained by substituting $s_{i,j} = a_{i,j}$ for all $i \in [n]$ and $j \in [d]$.

Over a general ring $R$, a projective module of rank $d$ need not be free, implying that some $M \in \Gr(d,n)(R)$ cannot be represented by a Stiefel matrix. To avoid the resulting technical difficulties, we follow the approach of \cite[p.~753]{HaSt} and assume that $R$ is local. The following standard lemma justifies the sufficiency of this assumption.

\begin{lemma}\label{lem:subscheme_local_determination}
Let $Y$ be a $k$-scheme, and let $Z_0, Z_1 \subset Y$ be two closed subschemes. Then $Z_0 = Z_1$ if and only if $Z_0(R) = Z_1(R)$ for all local $k$-algebras $R$.
\end{lemma}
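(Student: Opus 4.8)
The forward implication is immediate: if $Z_0 = Z_1$ as closed subschemes of $Y$, then their functors of points coincide, so in particular $Z_0(R) = Z_1(R)$ for every $k$-algebra $R$, local or not. The content is the reverse implication, and the plan is to reduce it to a purely commutative-algebra statement and then detect nonzero elements of an ideal by testing against localizations.

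First I would reduce to the affine case. Since equality of closed subschemes is local on $Y$, it suffices to check $Z_0 \cap U = Z_1 \cap U$ on each member $U = \Spec A$ of an affine open cover. The key point enabling this reduction is that a $k$-morphism $\Spec R \to Y$ out of a local ring $R$ sends the unique closed point of $\Spec R$ into some member $U$ of the cover, and hence factors through $U$; thus $Y(R) = \bigcup_U U(R)$ for local $R$, and intersecting the hypothesis $Z_0(R) = Z_1(R)$ with $U(R)$ yields $(Z_0 \cap U)(R) = (Z_1 \cap U)(R)$ for all local $R$. Writing $Z_j \cap U = V(I_j)$ for ideals $I_j \subset A$, the identity $(Z_j \cap U)(R) = \Hom_{k\text{-alg}}(A/I_j, R)$ reduces the claim to showing $I_0 = I_1$.

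To prove $I_0 = I_1$, suppose toward a contradiction that some $a \in I_0 \setminus I_1$. Set $B = A/I_1$ and let $\bar a$ be the image of $a$, which is nonzero by assumption. Since the canonical map $B \to \prod_{\mathfrak p} B_{\mathfrak p}$, indexed by the primes of $B$, is injective, there is a prime $\mathfrak p$ with $\bar a \neq 0$ in the local $k$-algebra $R \coloneqq B_{\mathfrak p}$. The composite $\phi \colon A \twoheadrightarrow B \to B_{\mathfrak p} = R$ then satisfies $I_1 \subseteq \ker \phi$ but $a \notin \ker \phi$, so $\phi$ defines a point of $(Z_1 \cap U)(R)$ that does not lie in $(Z_0 \cap U)(R)$, contradicting the previous paragraph. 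By symmetry $I_0 = I_1$, which gives $Z_0 = Z_1$.

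The one place demanding care — and the reason the statement is phrased for local algebras rather than, say, fields — is the detection of \emph{nilpotent} elements of $A/I_1$. A nonzero nilpotent $\bar a$ lies in every prime, so no map to a field can witness $\bar a \neq 0$, and testing only against residue fields $\kappa(\mathfrak p)$ would recover merely the reduced subscheme. The localizations $B_{\mathfrak p}$ are precisely the local $k$-algebras rich enough to see this nilpotent structure, which is essential here since the schemes of interest (such as $\Pic^\tau X$ in positive characteristic) are genuinely non-reduced. This is the conceptual heart of the argument; the remaining steps are routine.
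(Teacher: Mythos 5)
Your proof is correct and follows essentially the same route as the paper: reduce to the affine case, then detect any discrepancy between the ideals $I_0$ and $I_1$ by mapping to the local rings $(A/I_1)_{\mathfrak p}$, which is exactly the paper's choice of test ring. The only cosmetic differences are that you phrase the detection step as a contradiction via the injectivity of $B \to \prod_{\mathfrak p} B_{\mathfrak p}$ where the paper localizes the ideals and cites a standard commutative-algebra fact, and that you spell out the factorization of $\Spec R \to Y$ through an affine chart, which the paper leaves implicit in ``the problem is local on $Y$.''
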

\begin{proof}
The forward implication is trivial. For the converse, since the problem is local on $Y$, we may assume $Y = \Spec A$. Let $I_0, I_1 \subset A$ be the ideals defining the closed subschemes $Z_0$ and $Z_1$. Let $\mathfrak{p} \subset A$ be an arbitrary prime ideal. By \cite[Exercise~13.53]{AltmanKleiman2013}, it suffices to show that $(I_0)_{\mathfrak{p}} = (I_1)_{\mathfrak{p}}$.

Consider the local ring $R = (A/I_1)_{\mathfrak{p}}$. The canonical map $\phi: A \to R$ satisfies $\phi(I_1) = 0$, so $\phi \in Z_1(R)$. By the hypothesis $Z_0(R) = Z_1(R)$, we have $\phi \in Z_0(R)$, which implies $\phi(I_0) = 0$. Hence, the image of $I_0$ in $A_{\mathfrak{p}}$ is contained in $(I_1)_{\mathfrak{p}}$, meaning that $(I_0)_{\mathfrak{p}} \subseteq (I_1)_{\mathfrak{p}}$. By symmetry, we obtain $(I_1)_{\mathfrak{p}} \subseteq (I_0)_{\mathfrak{p}}$, and thus $(I_0)_{\mathfrak{p}} = (I_1)_{\mathfrak{p}}$.
\end{proof}

Now, we recall the definition of Pl\"ucker coordinates.

\begin{definition}\label{def:plucker_coordinates}
Let $R$ be a local $k$-algebra, and let $S = (s_{i,j})_{i\in[n],\, j\in[d]}$ be a Stiefel matrix representing an $R$-module $M \in \Gr(d,n)(R)$. The Pl\"ucker coordinate $p_\alpha$ corresponding to a strictly increasing sequence $\alpha \in \Inc{d}{n}$ is defined by
\[
p_\alpha \coloneqq \det S_{\alpha,\bullet} = \det \begin{pmatrix}
s_{\alpha(0), 0} & s_{\alpha(0), 1} & \cdots & s_{\alpha(0), d-1} \\
s_{\alpha(1), 0} & s_{\alpha(1), 1} & \cdots & s_{\alpha(1), d-1} \\
\vdots& \vdots& \ddots & \vdots\\
s_{\alpha(d-1), 0} & s_{\alpha(d-1), 1} & \cdots & s_{\alpha(d-1), d-1}
\end{pmatrix}.
\]
\end{definition}

Pl\"ucker coordinates are unique up to a unit factor, since replacing a Stiefel matrix $S$ with $S \cdot B$ for some $B \in \GL_d(R)$ scales the Pl\"ucker matrix by $\det B$. In the absence of a specific $M \in \Gr(d,n)(R)$, we regard the symbols $p_\alpha$ for $\alpha \in \Inc{d}{n}$ as formal indeterminates. We define the polynomial ring
\[k[p] \coloneqq k[p_\alpha \mid \alpha \in \Inc{d}{n}].\]
The Pl\"ucker coordinates serve as the homogeneous coordinates for the Pl\"ucker embedding $\Gr(d,n) \hookrightarrow \mathbb{P}(\bigwedge^d k^n)$. They are subject to the Pl\"ucker relations, which are explicit quadratic polynomial equations generating the ideal $I_{\Gr(d,n)} \subset k[p]$ \cite[p.~1063, (QR)]{KleimanLaksov}\cite[p.~65]{Harris}.

For convenience, we extend the definition of $p_\alpha$ to arbitrary sequences $\alpha \colon [d] \to [n]$, not necessarily strictly increasing, by the formula
\[ 
p_\alpha = \det S_{\alpha,\bullet}. 
\]
If the sequence $\alpha$ contains repeated terms, the corresponding matrix has identical rows, implying $p_\alpha = 0$. If $\alpha$ consists of distinct terms, let $\beta \in \Inc{d}{n}$ be the unique strictly increasing sequence obtained by reordering $\alpha$. Then there exists a unique permutation $\sigma \colon [d] \to [d]$ such that $\alpha = \beta \circ \sigma$. By the alternating property of the determinant, we have
\[
p_\alpha = \sgn(\sigma) p_\beta,
\]
where $\sgn(\sigma)$ denotes the sign of the permutation $\sigma$. For example, 
\[p_{1,1,2} = 0 \text{ and } p_{0,3,2} = -p_{0,2,3}.\]

\begin{definition}
Given $\alpha \in \Inc{d}{n}$, the affine chart $U_\alpha$ of $\Gr(d,n)$ indexed by $\alpha$ is the open subscheme defined by the condition $p_\alpha \neq 0$.
\end{definition}

Let $M \in U_\alpha(R)$ be an $R$-point. If $S \in \Mat_{n \times d}(R)$ is a Stiefel matrix representing $M$, then the $d \times d$ submatrix $S_{\alpha,\bullet}$ is invertible, meaning that $\det(S_{\alpha,\bullet})$ is a unit in $R$. Since the Stiefel matrix is defined up to the right action of $\GL_d(R)$, we may replace $S$ with the normalized matrix $S \cdot (S_{\alpha,\bullet})^{-1}$. Consequently, on the chart $U_\alpha$, there is a unique Stiefel matrix $S$ representing $M$ such that $S_{\alpha,\bullet} = I_d$.

This normalization is the Grassmannian analogue of the standard affine charts on projective space $\mathbb{P}^N$, where a nonzero homogeneous coordinate is normalized to $1$. The remaining $d(n-d)$ entries of $S$ serve as affine coordinates, establishing the isomorphism $U_\alpha \cong \mathbb{A}^{(n-d)d}_k$.

Since the Pl\"ucker coordinates $p_\alpha$ encode the submodule $M$, one expects to reconstruct $M$ directly from these coordinates. More precisely, we will introduce an explicit matrix $P$ expressed in terms of Pl\"ucker coordinates, whose column space coincides globally with $M$ on $\Gr(d,n)$. For $\beta \in \Inc{d-1}{n}$ and $m \in [n]$, let $\beta\cat m$ denote the sequence of length $d$ obtained by appending $m$ to $\beta$. Formally,
\[
(\beta\cat m)(i) = \begin{cases}
\beta(i) & \text{if } i < d-1, \\
m & \text{if } i = d-1.
\end{cases}
\]

\begin{definition}
The \emph{Pl\"ucker matrix} of $\Gr(d,n)$ is the $n \times \binom{n}{d-1}$ matrix defined by
\[
P = (p_{\beta\cat i})_{i \in [n], \, \beta \in \Inc{d-1}{n}}.
\]
\end{definition}

As an example, the Pl\"ucker matrix for $\Gr(2,4)$ is
\[
P = \begin{pmatrix}
p_{0,0} & p_{1,0} & p_{2,0} & p_{3,0} \\
p_{0,1} & p_{1,1} & p_{2,1} & p_{3,1} \\
p_{0,2} & p_{1,2} & p_{2,2} & p_{3,2} \\
p_{0,3} & p_{1,3} & p_{2,3} & p_{3,3}
\end{pmatrix}
= \begin{pmatrix}
0 & -p_{0,1} & -p_{0,2} & -p_{0,3} \\
p_{0,1} & 0 & -p_{1,2} & -p_{1,3} \\
p_{0,2} & p_{1,2} & 0 & -p_{2,3} \\
p_{0,3} & p_{1,3} & p_{2,3} & 0
\end{pmatrix}.
\]
In standard literature, the term Pl\"ucker matrix is defined only for $\Gr(2,4)$ as the skew-symmetric matrix shown above. Our definition generalizes this classical construction to $\Gr(d,n)$ for arbitrary $d$ and $n$. For another example, the Pl\"ucker matrix for $\Gr(3,4)$ is
\begin{align*}
P &= \begin{pmatrix}
p_{0,1,0} & p_{0,2,0} & p_{0,3,0} & p_{1,2,0} & p_{1,3,0} & p_{2,3,0} \\
p_{0,1,1} & p_{0,2,1} & p_{0,3,1} & p_{1,2,1} & p_{1,3,1} & p_{2,3,1} \\
p_{0,1,2} & p_{0,2,2} & p_{0,3,2} & p_{1,2,2} & p_{1,3,2} & p_{2,3,2} \\
p_{0,1,3} & p_{0,2,3} & p_{0,3,3} & p_{1,2,3} & p_{1,3,3} & p_{2,3,3}
\end{pmatrix} \\
&= \begin{pmatrix}
0 & 0 & 0 & p_{0,1,2} & p_{0,1,3} & p_{0,2,3} \\
0 & -p_{0,1,2} & -p_{0,1,3} & 0 & 0 & p_{1,2,3} \\
p_{0,1,2} & 0 & -p_{0,2,3} & 0 & -p_{1,2,3} & 0 \\
p_{0,1,3} & p_{0,2,3} & 0 & p_{1,2,3} & 0 & 0
\end{pmatrix}.
\end{align*}

In general, every entry of $P$ is either $0$ or equal to $\pm p_\alpha$ for a unique $\alpha \in \Inc d n$.

\begin{lemma} \label{lem:im_P_subset_M}
The Pl\"ucker matrix $P$ representing $M \in \Gr(d,n)(R)$ satisfies $\im P \subset M$.
\end{lemma}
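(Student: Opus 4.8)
The plan is to produce each column of $P$ as an explicit $R$-linear combination of the columns of a Stiefel matrix $S$ representing $M$; this immediately gives $\im P \subset \im S = M$. Since $R$ is local, such a Stiefel matrix $S$ exists, and since rescaling $P$ by a unit does not alter its column space, there is no harm in fixing one choice of $S$ and computing every Pl\"ucker coordinate $p_\alpha = \det S_{\alpha,\bullet}$ from it. Thus the statement reduces to a column-by-column computation.

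Fix a column index $\beta \in \Inc{d-1}{n}$. The $i$-th entry of the column $P_{\bullet,\beta}$ is, by definition, $p_{\beta\cat i} = \det S_{(\beta\cat i),\bullet}$, the determinant of the $d\times d$ matrix whose first $d-1$ rows are $S_{\beta(0),\bullet},\dots,S_{\beta(d-2),\bullet}$ and whose last row is $S_{i,\bullet}$. The key step is a Laplace expansion of this determinant along its last row, which yields
\[
p_{\beta\cat i} = \sum_{j\in[d]} (-1)^{(d-1)+j}\, s_{i,j}\, C_j,
\qquad
C_j \coloneqq \det S_{\beta,\gamma_j},
\]
where $\gamma_j \in \Inc{d-1}{d}$ is the increasing enumeration of $[d]\setminus\{j\}$. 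The essential point is that each cofactor $C_j$ depends only on $\beta$ and $j$, and not on the row index $i$. Reading this identity simultaneously over all $i\in[n]$ therefore expresses the whole column as
\[
P_{\bullet,\beta} = \sum_{j\in[d]} (-1)^{(d-1)+j}\, C_j\, S_{\bullet,j},
\]
a fixed $R$-linear combination of the columns $S_{\bullet,j}$ of $S$. Hence $P_{\bullet,\beta} \in \im S = M$, and letting $\beta$ range over $\Inc{d-1}{n}$ gives $\im P \subset M$.

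I do not expect a genuine obstacle here: the entire content is the cofactor expansion together with the observation that the cofactors are constant in $i$. The only points that demand care are the correct bookkeeping of the signs and of the deleted-column index sets $\gamma_j$, and the remark at the outset that both the existence of $S$ and the well-definedness of $\im P$ (despite the unit ambiguity in the $p_\alpha$) rely on the standing assumption that $R$ is local.
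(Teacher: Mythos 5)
Your proof is correct and follows essentially the same route as the paper's: expand $p_{\beta\cat i}=\det S_{\beta\cat i,\bullet}$ along its last row, observe the cofactors depend only on $\beta$ and $j$ (not on $i$), and read off each column of $P$ as a fixed $R$-linear combination of the columns of $S$. Your $\gamma_j$ is exactly the paper's $\epsilon(j)$, and the sign bookkeeping matches.
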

\begin{proof}
Let $S$ be a Stiefel matrix representing $M$. Since Pl\"ucker coordinates are unique up to a unit factor, we may assume that $P$ is defined using this Stiefel matrix. Fix an arbitrary $\beta \in \Inc{d-1}{n}$. For each $j \in [d]$, let $\epsilon(j) \colon [d-1] \to [d]$ denote the sequence $(0, 1, \dots, \widehat{j}, \dots, d-1)$. The $(i, \beta)$-entry of $P$ is
\begin{align*}
p_{\beta\cat i} &= \det S_{\beta\cat i, \bullet} \\
 &= \sum_{j=0}^{d-1} \left((-1)^{d-1+j} \det S_{\beta,\epsilon(j)}\right) \cdot s_{i,j},
\end{align*}
by the cofactor expansion along the $i$-th row. Let $c_j = (-1)^{d-1+j} \det S_{\beta,\epsilon(j)}$, which is independent of the row index $i$. Then the $\beta$-th column of $P$ can be expressed as
\[
 P_{\bullet, \beta} = \sum_{j=0}^{d-1} c_j S_{\bullet, j}.
\]
Consequently, $\im P \subset \im S = M$.
\end{proof}

We will eventually show that $\im P = M$. To this end, we demonstrate that the columns of a suitable $n \times d$ submatrix of $P$ generate $M$. For $\alpha \in \Inc{d}{n}$, let $\alpha[j \mapsto m]$ denote the sequence obtained by replacing the $j$-th entry of $\alpha$ with $m$. Formally,
\[
\alpha[j \mapsto m](i) = \begin{cases}
\alpha(i) & \text{if } i \neq j, \\
m & \text{if } i = j.
\end{cases}
\]

\begin{definition}\label{def:plucker_semisubmatrix}
Given $\alpha \in \Inc{d}{n}$, we define the matrix
\[
P_\alpha = (p_{\alpha[j \mapsto i]})_{i \in [n], \, j \in [d]}.
\]
\end{definition}

Every column of $P_\alpha$ corresponds to a column of $P$ or its negative. More precisely, let $\beta \in \Inc{d-1}{n}$ be the sequence obtained from $\alpha$ by removing the $j$-th entry. Then the $j$-th column of $P_\alpha$ is equal to the $\beta$-th column of $P$ up to a sign.

As an example, if $\alpha = 0, 1, 3$, then for $\Gr(3,4)$, we have
\[
P_{0,1,3} = \begin{pmatrix}
p_{0,1,3} & p_{0,0,3} & p_{0,1,0} \\
p_{1,1,3} & p_{0,1,3} & p_{0,1,1} \\
p_{2,1,3} & p_{0,2,3} & p_{0,1,2} \\
p_{3,1,3} & p_{0,3,3} & p_{0,1,3}
\end{pmatrix}
= \begin{pmatrix}
p_{1,3,0} & -p_{0,3,0} & p_{0,1,0} \\
p_{1,3,1} & -p_{0,3,1} & p_{0,1,1} \\
p_{1,3,2} & -p_{0,3,2} & p_{0,1,2} \\
p_{1,3,3} & -p_{0,3,3} & p_{0,1,3}
\end{pmatrix}.
\]

\begin{lemma} \label{lem:P_alpha_eq_S}
Let $M \in U_\alpha(R)$, and let $S$ be the unique Stiefel matrix representing $M$ such that $S_{\alpha, \bullet} = I_d$. Then we have $P_\alpha = S$.
\end{lemma}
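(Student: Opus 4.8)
The plan is to prove the identity entrywise, showing that the $(i,j)$-entry of $P_\alpha$ equals $s_{i,j}$ for every $i \in [n]$ and $j \in [d]$. By \Cref{def:plucker_semisubmatrix} and the extended definition of the Pl\"ucker coordinates, this entry is
\[
(P_\alpha)_{i,j} = p_{\alpha[j\mapsto i]} = \det S_{\alpha[j\mapsto i],\bullet}.
\]
Since $S$ is a genuine (normalized) Stiefel matrix rather than a formal one, the coordinates $p_\gamma = \det S_{\gamma,\bullet}$ are honest elements of $R$ with no ambiguity up to units; in particular $p_\alpha = \det S_{\alpha,\bullet} = \det I_d = 1$. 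Thus the whole computation reduces to evaluating a single $d\times d$ determinant for each pair $(i,j)$.

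First I would unwind the submatrix $S_{\alpha[j\mapsto i],\bullet}$ using the normalization $S_{\alpha,\bullet} = I_d$. Recall that the row of $S_{\alpha,\bullet}$ in position $\ell$ is the $\alpha(\ell)$-th row of $S$; the hypothesis $S_{\alpha,\bullet} = I_d$ says precisely that this row equals the standard basis vector $e_\ell$. The sequence $\alpha[j\mapsto i]$ agrees with $\alpha$ except that its $j$-th entry is $i$ instead of $\alpha(j)$. Consequently $S_{\alpha[j\mapsto i],\bullet}$ coincides with $I_d = S_{\alpha,\bullet}$ except in its $j$-th row, which is replaced by the $i$-th row $S_{i,\bullet} = (s_{i,0},\dots,s_{i,d-1})$ of $S$. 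In other words, $S_{\alpha[j\mapsto i],\bullet}$ is the identity matrix with its $j$-th row overwritten by $S_{i,\bullet}$.

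It then remains to compute the determinant of the identity matrix with its $j$-th row replaced by a vector $v = (v_0,\dots,v_{d-1})$, which I claim is simply $v_j$. This follows from linearity of the determinant in the $j$-th row together with the observation that substituting $v = e_\ell$ yields $\det I_d = 1$ when $\ell = j$ and yields a matrix with two identical rows, hence determinant $0$, when $\ell \neq j$; equivalently, one may expand along the $j$-th row. Applying this with $v = S_{i,\bullet}$ gives $\det S_{\alpha[j\mapsto i],\bullet} = s_{i,j}$, whence $(P_\alpha)_{i,j} = s_{i,j}$ and therefore $P_\alpha = S$. There is no real obstacle here: the argument is a direct determinant evaluation, and the only point demanding care is the index bookkeeping in the previous step, namely tracking correctly, under the normalization $S_{\alpha,\bullet}=I_d$ and the substitution $\alpha[j\mapsto i]$, which row of $S$ occupies which position of the $d\times d$ submatrix.
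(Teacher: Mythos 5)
Your proof is correct and follows essentially the same route as the paper: both compute the $(i,j)$-entry $p_{\alpha[j\mapsto i]} = \det S_{\alpha[j\mapsto i],\bullet}$, observe that under the normalization $S_{\alpha,\bullet} = I_d$ this submatrix is the identity with its $j$-th row replaced by the $i$-th row of $S$, and conclude that the determinant equals $s_{i,j}$. Your additional justification of the determinant evaluation (via multilinearity or row expansion) is a detail the paper leaves implicit, but the argument is identical.
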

\begin{proof}
By definition, the $(i,j)$-entry of $P_\alpha$ is given by
\[
p_{\alpha[j \mapsto i]} = \det(S_{\alpha[j \mapsto i], \bullet}).
\]
Since $S_{\alpha, \bullet} = I_d$, the matrix $S_{\alpha[j \mapsto i], \bullet}$ is obtained from the identity matrix $I_d$ by replacing its $j$-th row with the $i$-th row of $S$. Hence, $\det(S_{\alpha[j \mapsto i], \bullet}) = s_{i,j}$, implying that $P_\alpha = S$.
\end{proof}

\begin{theorem} \label{thm:im_P_eq_M}
The Pl\"ucker matrix $P$ representing $M \in \Gr(d,n)(R)$ satisfies $\im P = M$.
\end{theorem}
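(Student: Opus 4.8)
The plan is to combine the inclusion $\im P \subseteq M$ already established in \Cref{lem:im_P_subset_M} with the reverse inclusion $M \subseteq \im P$, which I would extract from the normalized description of $P$ on a suitable affine chart. The only substantive inputs needed are the locality of $R$ and \Cref{lem:P_alpha_eq_S}; the theorem is then essentially the juxtaposition of the two preparatory lemmas.

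First I would exploit that $R$ is local. As noted after the definition of a Stiefel matrix, this forces $M \in \Gr(d,n)(R)$ to be free of rank $d$, so it admits a Stiefel matrix $S$, and moreover at least one $d \times d$ minor of $S$ is a unit, say $\det S_{\alpha,\bullet}$ for some $\alpha \in \Inc{d}{n}$. This is exactly the condition $M \in U_\alpha(R)$. Replacing $S$ by the normalized matrix $S \cdot (S_{\alpha,\bullet})^{-1}$, which represents the same module, I may assume $S_{\alpha,\bullet} = I_d$; since Pl\"ucker coordinates change only by a common unit factor under such a replacement, the column space $\im P$ is unaffected.

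Next I would invoke \Cref{lem:P_alpha_eq_S}, which under this normalization yields $P_\alpha = S$. By \Cref{def:plucker_semisubmatrix} and the remark immediately following it, every column of $P_\alpha$ coincides, up to sign, with a column of the full Pl\"ucker matrix $P$; hence $\im P_\alpha \subseteq \im P$. Chaining these identities gives $M = \im S = \im P_\alpha \subseteq \im P$. Together with $\im P \subseteq M$ from \Cref{lem:im_P_subset_M}, this establishes $\im P = M$.

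I do not anticipate a genuine obstacle: the real content lives in the two lemmas, and the crux is simply the observation that locality guarantees $M$ lies in some chart $U_\alpha$, which is what lets \Cref{lem:P_alpha_eq_S} apply. The only point demanding a moment's care is the unit ambiguity in the Pl\"ucker coordinates, but this is harmless since scaling a matrix by a unit preserves its column space, so the asserted equality $\im P = M$ is well posed independently of the chosen Stiefel representative.
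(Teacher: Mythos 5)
Your proof is correct and follows exactly the paper's approach: the paper likewise combines \Cref{lem:im_P_subset_M} with \Cref{lem:P_alpha_eq_S}, using the fact that the charts $U_\alpha$ cover $\Gr(d,n)$ (equivalently, that locality of $R$ places $M$ in some $U_\alpha$). Your write-up merely makes explicit the normalization and the sign/unit bookkeeping that the paper's one-line proof leaves implicit.
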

\begin{proof}
Since the open charts $\{U_\alpha\}$ cover $\Gr(d,n)$, this follows immediately from Lemma \ref{lem:im_P_subset_M} and Lemma \ref{lem:P_alpha_eq_S}.
\end{proof}

\begin{remark}
The Pl\"ucker embedding $\Gr(d,n) \hookrightarrow \mathbb{P}(\bigwedge^d k^n)$ induces the Serre twisting sheaf $\mathcal{O}_{\Gr(d,n)}(1)$. Let $\mathcal{B} \subset \mathcal{O}_{\Gr(d,n)}^{\oplus n}$ denote the universal subbundle of rank $d$. \Cref{thm:im_P_eq_M} implies that $\im P = \mathcal{B}(1)$. A similar observation is made in \cite[Lemma~5.5]{Kwe2}.
\end{remark}

\begin{remark}
The Pl\"ucker matrix can also be defined in a basis-free manner. Let $V = R^n$ where $R$ is a local ring. Given $p \in \bigwedge^d V$ and $\omega \in \bigwedge^{d-1} V^\dual$, recall the interior product $\omega \mathbin{\lrcorner} p \in V$ defined by the adjunction
\[
\langle \varphi, \omega \mathbin{\lrcorner} p \rangle = \langle \omega \wedge \varphi, p \rangle
\]
for all $\varphi \in V^\dual$ \cite[Chapter~III, \S11, no.~6]{BourbakiAlg1}. Then the Pl\"ucker matrix represents the linear map
\begin{align*}
P \colon \bigwedge^{d-1} V^\dual &\to V \\
\omega &\mapsto \omega \mathbin{\lrcorner} p.
\end{align*}
In this context, \Cref{thm:im_P_eq_M} can be rephrased as follows: if $p = m_0 \wedge \dots \wedge m_{d-1}$ is decomposable, then $\im P$ is exactly the submodule $M$ generated by $m_0, \dots, m_{d-1}$. Furthermore, the Pl\"ucker relations are merely $(\omega \mathbin{\lrcorner} p) \wedge p = 0$ for all $\omega \in \bigwedge^{d-1} V^\dual$. However, since our primary focus is on explicit computation, we rely on the coordinate-based definition presented earlier.
\end{remark}

\subsection{Change of Coordinates in a Single Grassmannian}

We are now ready to describe the methods for defining closed subschemes of $\Gr(d,n)$ using both coordinate systems. Specifically, we aim to describe a subscheme $Z \subset \Gr(d,n)$ by an ideal $J \subset k[p]/I_{\Gr(d,n)}$ or an ideal $I \subset k[s]$, and to convert between these descriptions. Since Pl\"ucker coordinates are unique only up to a unit factor, or equivalently up to the $\GL_1$-action, the ideal $J$ must be homogeneous, or equivalently stable under the scheme-theoretic action of $\GL_1 = \mathbb{G}_m$, for the condition $J=0$ at $M$ to be well-defined. Similarly, since Stiefel matrices are unique only up to the right $\GL_d$-action, the ideal $I$ must be $\GL_d$-stable for the condition $I=0$ at $M$ to be well-defined.

\begin{definition}\label{def:vanishing_locus}
Let $Z \subset \Gr(d, n)$ be a closed subscheme.
\begin{enumerate}
\item Let $J \subset k[p]/I_{\Gr(d,n)}$ be a homogeneous ideal. We write $V_p(J) = Z$ if for every local $k$-algebra $R$ and every $M \in \Gr(d,n)(R)$, the condition $J = 0$ at $M$ holds if and only if $M \in Z(R)$.
\item Let $I \subset k[s]$ be a $\GL_d$-stable ideal. We write $V_s(I) = Z$ if for every local $k$-algebra $R$ and every $M \in \Gr(d,n)(R)$, the condition $I = 0$ at $M$ holds if and only if $M \in Z(R)$.
\end{enumerate}
\end{definition}

Since the Pl\"ucker coordinates serve as projective coordinates, $V_p(J)$ always exists. Although the existence of $V_s(I)$ is not yet guaranteed, \Cref{lem:subscheme_local_determination} implies that if such a subscheme exists, it is unique. Transforming equations from Pl\"ucker coordinates into Stiefel coordinates is straightforward.

\begin{definition}\label{def:plucker_to_stiefel}
For a polynomial $g \in k[p]/I_{\Gr(d,n)}$, let $g_s \in k[s]$ denote the polynomial obtained by substituting each indeterminate $p_\alpha$ with the minor $\det S_{\alpha,\bullet}$. For an ideal $J \subset k[p]/I_{\Gr(d,n)}$, we define $J_s \subset k[s]$ as the ideal generated by the set
\[ \{ g_s \mid g \in J\}. \]
\end{definition}

The assignment $g \mapsto g_s$ yields a well-defined $k$-algebra homomorphism $k[p] / I_{\Gr(d,n)} \to k[s]$, because the $d \times d$ minors of any $n \times d$ matrix satisfy the Pl\"ucker relations. Moreover, if $J$ is homogeneous, then $J_s$ is $\GL_d$-stable. Indeed, for a homogeneous polynomial $g \in J$, the right action of $B \in \GL_d(R)$ on $g_s$ results in multiplication by $(\det B)^{\deg g} \in R^\times$.

\begin{proposition}\label{prop:plucker_to_stiefel}
Given a homogeneous ideal $J \subset k[p]/I_{\Gr(d,n)}$, we have
\[ V_p(J) = V_s(J_s). \]
\end{proposition}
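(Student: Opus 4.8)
The plan is to unwind the two vanishing conditions in \Cref{def:vanishing_locus} and show that they agree on every local $k$-algebra. Since the Pl\"ucker coordinates are genuine projective coordinates, $V_p(J)$ exists; so it suffices to set $Z := V_p(J)$ and verify that $V_s(J_s) = Z$, i.e. that for every local $k$-algebra $R$ and every $M \in \Gr(d,n)(R)$ the condition ``$J_s = 0$ at $M$'' is equivalent to $M \in Z(R)$, which by definition of $V_p(J)$ means ``$J = 0$ at $M$''. Thus the whole statement reduces to the equivalence of the two conditions ``$J = 0$ at $M$'' and ``$J_s = 0$ at $M$'', checked pointwise over all local $R$ and all $M$; \Cref{lem:subscheme_local_determination} then guarantees uniqueness, confirming that $V_s(J_s)$ is well-defined and equal to $V_p(J)$.

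Fix a local $k$-algebra $R$ and a point $M \in \Gr(d,n)(R)$, and choose a Stiefel matrix $S$ representing $M$, which is possible because $R$ is local. The crux is the single algebraic identity $g_s(S) = g\bigl((\det S_{\alpha,\bullet})_\alpha\bigr)$, valid for every $g \in k[p]/I_{\Gr(d,n)}$. This is immediate from \Cref{def:plucker_to_stiefel}: forming $g_s$ substitutes each indeterminate $p_\alpha$ by the minor $\det S_{\alpha,\bullet}$, and the tuple $(\det S_{\alpha,\bullet})_\alpha$ is precisely the Pl\"ucker coordinate vector $p(M)$ of $M$. Both sides are well-posed: the left-hand side is independent of the choice of $S$ because $J_s$ is $\GL_d$-stable, and the right-hand side is independent of the chosen lift of $g$ to $k[p]$ because the minors of $S$ satisfy the Pl\"ucker relations, so $I_{\Gr(d,n)}$ vanishes at $p(M)$. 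In words, evaluating $g_s$ at the Stiefel matrix equals evaluating $g$ at the induced Pl\"ucker coordinates.

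With this identity in hand the equivalence is formal. By definition ``$J = 0$ at $M$'' means $g(p(M)) = 0$ for all $g \in J$, while ``$J_s = 0$ at $M$'' means $h(S) = 0$ for all $h \in J_s$. If $J$ vanishes at $M$, then each generator $g_s$ of $J_s$ satisfies $g_s(S) = g(p(M)) = 0$; since the evaluation map $k[s] \to R$, $f \mapsto f(S)$, is a ring homomorphism, the entire ideal $J_s$ generated by these elements also vanishes at $S$. Conversely, if $J_s$ vanishes at $S$, then in particular its generators do, so $g(p(M)) = g_s(S) = 0$ for every $g \in J$, meaning $J$ vanishes at $M$. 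This shows the two conditions coincide and completes the argument.

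The main obstacle is not any deep difficulty but the bookkeeping of well-definedness: one must confirm that the Stiefel-side condition does not depend on the chosen Stiefel matrix and that the Pl\"ucker-side condition does not depend on the chosen lift modulo $I_{\Gr(d,n)}$. These are exactly the $\GL_d$-stability of $J_s$ and the Pl\"ucker relations, both already recorded in the discussion preceding \Cref{def:plucker_to_stiefel}. Once they are in place, the substitution $p_\alpha \mapsto \det S_{\alpha,\bullet}$ furnishes the desired equivalence essentially tautologically.
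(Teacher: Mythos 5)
Your proof is correct and follows essentially the same route as the paper, whose entire proof is the remark that the claim ``follows immediately from the construction of $J_s$ and the definition of Pl\"ucker coordinates.'' Your write-up simply makes explicit what that one-liner compresses: the tautological identity $g_s(S) = g\bigl((\det S_{\alpha,\bullet})_\alpha\bigr)$, together with the well-definedness checks ($\GL_d$-stability of $J_s$ and the Pl\"ucker relations) already recorded in the surrounding discussion.
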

\begin{proof}
This follows immediately from the construction of $J_s$ and the definition of Pl\"ucker coordinates.
\end{proof}

Consequently, if $Z \subset \Gr(d,n)$ is defined by homogeneous polynomials $g_i \in k[p] / I_{\Gr(d,n)}$, then $Z$ is also defined by $(g_i)_s \in k[s]$. Thus, if $Z$ is defined by maximal minors of $S$, explicitly describing it in Pl\"ucker coordinates is straightforward. We now address the more general problem of converting an ideal from Stiefel coordinates to Pl\"ucker coordinates.

\begin{definition}\label{def:stiefel_to_plucker}
For $f \in k[s]$ and $\alpha \in \Inc{d}{n}$, let $f_{p,\alpha} \in k[p] / I_{\Gr(d,n)}$ be the polynomial defined by
\[ f_{p,\alpha} = f(P_\alpha) \in k[p] / I_{\Gr(d,n)}, \]
where $P_\alpha$ is the matrix defined in Definition~\ref{def:plucker_semisubmatrix}. For a $\GL_d$-stable ideal $I \subset k[s]$, we define $I_p \subset k[p] / I_{\Gr(d,n)}$ as the ideal generated by
\[ \{ f_{p,\alpha} \mid f \in I, \, \alpha \in \Inc d n \}. \]
\end{definition}

Clearly, $f \mapsto f_{p,\alpha}$ yields a homomorphism $k[s] \to k[p] / I_{\Gr(d,n)}$, and $I_p$ is homogeneous.

\begin{lemma} \label{lem:GL_dense}
Let $R$ be a ring and let $h$ be a regular function on $\Mat_{d,R}$. If $h = 0$ on $\GL_{d,R}$, then $h = 0$ on $\Mat_{d,R}$.
\end{lemma}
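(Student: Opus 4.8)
The plan is to translate the apparently set-theoretic hypothesis into a statement about a localization, and then reduce the entire lemma to a single non-zero-divisor claim. Write $\Mat_{d,R} = \Spec R[x_{i,j}]$, where $x_{i,j}$ for $i,j \in [d]$ are the coordinate functions, and set $\delta \coloneqq \det(x_{i,j}) \in R[x_{i,j}]$. By definition $\GL_{d,R}$ is the principal open subscheme $D(\delta) \subset \Mat_{d,R}$ on which $\delta$ is invertible, so its ring of global functions is the localization $R[x_{i,j}][\delta^{-1}]$. A regular function $h \in R[x_{i,j}]$ therefore vanishes on $\GL_{d,R}$ precisely when its image in $R[x_{i,j}][\delta^{-1}]$ is zero, which by the description of localization means $\delta^{N} h = 0$ in $R[x_{i,j}]$ for some integer $N \ge 0$.

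Granting this reformulation, the lemma reduces to the assertion that $\delta$ is a non-zero-divisor in $R[x_{i,j}]$: once this is known, $\delta^N$ is a non-zero-divisor as well, and $\delta^N h = 0$ forces $h = 0$, which is exactly the conclusion $h = 0$ on $\Mat_{d,R}$. Thus the whole content of the lemma is the non-zero-divisor property of the generic determinant over an arbitrary base ring.

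I expect the main obstacle to be precisely that $R$ is permitted to be an arbitrary commutative ring, possibly with zero-divisors or nilpotents, so one cannot pass to a fraction field or argue by geometric density over a field. The observation that circumvents this is that every coefficient of $\delta = \sum_{\sigma} \sgn(\sigma) \prod_{i} x_{i,\sigma(i)}$ equals $\pm 1$, hence is a unit in $R$. Concretely, I would fix any monomial order on $R[x_{i,j}]$ and record that the leading coefficient $\mathrm{lc}(\delta)$ is a unit. For any nonzero $g$, the coefficient of the monomial $\mathrm{lm}(g)\,\mathrm{lm}(\delta)$ in the product $g\delta$ is exactly $\mathrm{lc}(g)\,\mathrm{lc}(\delta)$, which is nonzero because $\mathrm{lc}(\delta)$ is a unit and $\mathrm{lc}(g) \neq 0$; hence $g\delta \neq 0$. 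This shows $\delta$ is a non-zero-divisor and finishes the proof. (Alternatively, one may invoke McCoy's theorem, that a zero-divisor in a polynomial ring is annihilated by a nonzero constant of the base, which is impossible for $\delta$ since it has a coefficient equal to a unit.)
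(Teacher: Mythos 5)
Your proof is correct and takes essentially the same route as the paper: both identify the ring of regular functions on $\GL_{d,R}$ with the localization of $R[x_{i,j}]$ at the generic determinant $\delta$, and reduce the lemma to $\delta$ being a non-zero-divisor, so that the localization map is injective. The only difference is that the paper asserts the non-zero-divisor property without justification, whereas you supply a proof of it (via the unit leading coefficient of $\delta$ under a monomial order, or alternatively McCoy's theorem), which makes your write-up slightly more complete.
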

\begin{proof}
Let $A$ be the matrix of indeterminates for $\Mat_{d,R}$, and let $R[A]$ be its coordinate ring. The coordinate ring of $\GL_{d,R}$ is the localization $R[A]_{\det A}$. Since the determinant $\det A$ is not a zero divisor in $R[A]$, the localization map $R[A] \to R[A]_{\det A}$ is injective. Consequently, if $h = 0$ in $R[A]_{\det A}$, then $h = 0$ in $R[A]$.
\end{proof}

\begin{lemma} \label{lem:submodule_vanish}
Let $I \subset k[s]$ be a $\GL_d$-stable ideal. Let $T$ be an $n \times d$ matrix over a local $k$-algebra $R$ such that $f(T) = 0$ for all $f \in I$. Then for any $g \in I$ and for all $B \in \Mat_{d}(R)$, we have $g(TB) = 0$.
\end{lemma}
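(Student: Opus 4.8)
\emph{Proof proposal.} The plan is to reduce the statement, which quantifies over all $B \in \Mat_d(R)$, to an identity over the coordinate ring of $\GL_{d,R}$, where $\GL_d$-stability can actually be applied, and then to restore the general case via \Cref{lem:GL_dense}. Concretely, I would first fix $g \in I$, introduce a matrix of indeterminates $A = (a_{l,j})_{l,j \in [d]}$ representing the generic point of $\Mat_{d,R}$, and form the regular function
\[
h \coloneqq g(TA) \in R[A].
\]
Since the entries of $TA$ are $R$-linear forms in the $a_{l,j}$, this $h$ is a genuine polynomial with coefficients in $R$, and specializing $A$ to an arbitrary $B \in \Mat_d(R)$ gives $h(B) = g(TB)$. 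Thus it suffices to prove $h = 0$ in $R[A]$, and by \Cref{lem:GL_dense} this will follow once I show that $h$ vanishes on $\GL_{d,R}$, i.e.\ that $h = 0$ in the localization $R[A]_{\det A}$. This is the key reduction: it trades the quantifier ``for all $B \in \Mat_d$,'' where $\GL_d$-stability has no direct purchase, for a single identity over a ring in which the universal matrix $A$ is invertible.

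To establish the vanishing on $\GL_{d,R}$, I would invoke that $I$ is $\GL_d$-stable. The right action $(B,S) \mapsto SB$ preserves the locus cut out by $I$, so on coordinate rings the coaction sends each $g \in I$ into the ideal generated by $I$; explicitly,
\[
g(SA) = \sum_i c_i\, f_i(S)
\]
for suitable $f_i \in I$ and $c_i \in k[A]_{\det A}$, where $A$ denotes the generic invertible matrix and $S$ the Stiefel indeterminates. Base changing along $k \to R$ and then substituting $S = T$, the hypothesis $f_i(T) = 0$ yields
\[
h = g(TA) = \sum_i c_i\, f_i(T) = 0 \quad \text{in } R[A]_{\det A},
\]
which is exactly the assertion that $h$ vanishes on $\GL_{d,R}$.

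Finally, \Cref{lem:GL_dense} upgrades this to $g(TA) = 0$ in $R[A]$, and specializing $A = B$ for any $B \in \Mat_d(R)$ gives $g(TB) = 0$, as required. I expect the main obstacle to be conceptual rather than computational: it is the passage from invertible $B$, where stability applies, to all $B$, and this is precisely the Zariski-density of $\GL_d$ in $\Mat_d$ encoded in \Cref{lem:GL_dense}. The one delicate point to get right is that $\GL_d$-stability is being applied over the ring $R[A]_{\det A}$, which is \emph{not} local; the argument therefore relies on the scheme-theoretic (coaction) formulation of $\GL_d$-stability, valid over an arbitrary base, rather than the pointwise formulation over local algebras used in the surrounding discussion. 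Granting this interpretation, the proof is a direct combination of the invariance of $I$ with the density statement of \Cref{lem:GL_dense}.
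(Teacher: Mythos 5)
Your proposal is correct and follows essentially the same route as the paper's proof: form $g(TA)$ as a regular function on $\Mat_{d,R}$, use $\GL_d$-stability to see it vanishes on $\GL_{d,R}$, and conclude via \Cref{lem:GL_dense}. Your explicit coaction computation $g(SA) = \sum_i c_i f_i(S)$ is simply a careful filling-in of the step the paper states in one line, and your closing remark—that stability must be invoked scheme-theoretically over the non-local ring $R[A]_{\det A}$—correctly identifies the point that makes this step work.
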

\begin{proof}
Fix $g \in I$. Let $A$ be the matrix of indeterminates for $\Mat_{d,R}$. Then $g(TA)$ defines a regular function on $\Mat_{d,R}$. Since $f(T) = 0$ for all $f \in I$ and $I$ is $\GL_d$-stable, the function $g(TA)$ vanishes on $\GL_{d,R}$. By \Cref{lem:GL_dense}, it vanishes on $\Mat_{d,R}$, which implies that $g(TB) = 0$ for all $B \in \Mat_{d}(R)$.
\end{proof}

\begin{theorem}\label{thm:stiefel_to_plucker}
Given a $\GL_d$-stable ideal $I \subset k[s]$, we have
\[ V_s(I) = V_p(I_p). \]
In particular, $V_s(I)$ is a well-defined closed subscheme of $\Gr(d,n)$.
\end{theorem}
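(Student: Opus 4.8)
The plan is to define $Z := V_p(I_p)$, which exists because $I_p$ is homogeneous and the Pl\"ucker coordinates are genuine projective coordinates on $\Gr(d,n)$. By \Cref{lem:subscheme_local_determination}, I would then reduce the theorem to a pointwise statement: for every local $k$-algebra $R$ and every $M \in \Gr(d,n)(R)$, the condition $I = 0$ at $M$ holds if and only if $M \in Z(R)$, that is, if and only if $I_p = 0$ at $M$. Proving this equivalence establishes both that $V_s(I)$ exists and that it equals $V_p(I_p)$. Since $I_p$ is generated by the $f_{p,\alpha}$, the condition $I_p = 0$ at $M$ unwinds to the vanishing of each $f_{p,\alpha}$ at $M$, and the homogeneity of $f_{p,\alpha}$ makes this independent of the unit ambiguity in the Pl\"ucker coordinates, so the formulation is well-posed.

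For the forward implication $I=0 \Rightarrow I_p = 0$, I would fix a Stiefel matrix $S$ with $\im S = M$ and observe that evaluating $f_{p,\alpha}$ at $M$ is the same as evaluating $f$ at the concrete matrix $P_\alpha$ built from the minors $\det S_{\cdot,\bullet}$. By \Cref{lem:im_P_subset_M} we have $\im P \subset M$, and since each column of $P_\alpha$ is $\pm$ a column of $P$, also $\im P_\alpha \subset M = \im S$. Over the local ring $R$ the module $M$ is free of rank $d$ and $S \colon R^d \to M$ is an isomorphism, so each column of $P_\alpha$ is uniquely of the form $S b_j$; collecting the $b_j$ gives a factorization $P_\alpha = SB$ with $B \in \Mat_d(R)$. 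Then \Cref{lem:submodule_vanish}, applied with $T = S$ and using that $f(S) = 0$ for all $f \in I$ together with the $\GL_d$-stability of $I$, yields $f(SB) = 0$, i.e.\ $f_{p,\alpha}$ vanishes at $M$.

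For the converse $I_p = 0 \Rightarrow I = 0$, I would use that the charts $U_\alpha$ cover $\Gr(d,n)$: since $R$ is local and $M$ is free, some $d \times d$ minor of a Stiefel matrix is a unit, hence some $p_\alpha$ is a unit and $M \in U_\alpha(R)$. Normalizing to the unique $S$ with $S_{\alpha,\bullet} = I_d$, \Cref{lem:P_alpha_eq_S} gives $P_\alpha = S$ for this representative. The hypothesis forces $f_{p,\alpha}$ to vanish at $M$ for this particular $\alpha$, which reads $f(S) = f(P_\alpha) = 0$ for all $f \in I$; thus $I = 0$ at $M$, and $\GL_d$-stability makes this independent of the Stiefel matrix chosen.

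The step I expect to be the main obstacle is the factorization $P_\alpha = SB$ in the forward direction, where $B$ is only a general, possibly non-invertible, element of $\Mat_d(R)$. The hypothesis $f(S) = 0$ a priori controls $f$ only on the $\GL_d$-orbit of $S$, whereas the columns of $P_\alpha$ need not be an invertible recombination of the columns of $S$. Bridging this gap is precisely the role of \Cref{lem:submodule_vanish}, which rests on the density statement \Cref{lem:GL_dense} to propagate vanishing from $\GL_{d,R}$ to all of $\Mat_{d,R}$. The remaining work is bookkeeping: matching the two notions of vanishing across charts through $\im P_\alpha \subset M$ and $P_\alpha = S$ on $U_\alpha$, and checking homogeneity so that the scaling ambiguity of the Pl\"ucker coordinates is immaterial.
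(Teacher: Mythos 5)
Your proposal is correct and follows essentially the same route as the paper: the direction $I_p = 0 \Rightarrow I = 0$ via the chart normalization $P_\alpha = S$ (Lemma \ref{lem:P_alpha_eq_S}), and the direction $I = 0 \Rightarrow I_p = 0$ via the factorization $P_\alpha = SB$ with $B \in \Mat_d(R)$ (from $\im P_\alpha \subset M$) combined with Lemma \ref{lem:submodule_vanish} and the density Lemma \ref{lem:GL_dense}. Your explicit appeal to Lemma \ref{lem:subscheme_local_determination} and your spelled-out construction of $B$ from the freeness of $M$ only make explicit what the paper leaves implicit; there is no substantive difference.
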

\begin{proof}
Suppose that $M \in V_p(I_p)(R)$. Here, $M \in U_\alpha(R)$ for some $\alpha \in \Inc{d}{n}$, so we may assume that $P_\alpha = S$ by \Cref{lem:P_alpha_eq_S}. Then for every $f \in I$, we have $f_{p,\alpha} = 0$ at $M$. Since $f_{p,\alpha} = f(P_\alpha) = f(S)$, it follows that $f(S) = 0$. Hence, $I = 0$ at $M$, meaning that $V_p(I_p)(R) \subset V_s(I)(R)$.

Conversely, suppose that $M \in V_s(I)(R)$. Then for any $\alpha \in \Inc{d}{n}$, $\im P_\alpha \subset M$ by \Cref{thm:im_P_eq_M}, meaning that $P_\alpha = S \cdot B$ for some $B \in \Mat_d(R)$. \Cref{lem:submodule_vanish} now implies that
\[ f_{p,\alpha} = f(P_\alpha) = f(S B) = 0. \]
Hence, $I_p = 0$ at $M$, meaning that $V_s(I)(R) \subset V_p(I_p)(R)$.
\end{proof}

\subsection{Change of Coordinates in a Product of Grassmannians}

The results obtained in the previous subsection for a single Grassmannian can be naturally extended to products of Grassmannians. Let $N$ be a positive integer. Let $\mathbf{d} = (d_0, \dots, d_{N-1})$ and $\mathbf{n} = (n_0, \dots, n_{N-1})$ be sequences of integers such that $0 < d_i \le n_i$ for all $i \in [N]$. We define
\[
\Gr(\mathbf{d}, \mathbf{n}) \coloneqq \prod_{i=0}^{N-1} \Gr(d_i, n_i).
\]

For each $i \in [N]$, let $S^{(i)}$ denote the Stiefel matrix of size $n_i \times d_i$ representing the $i$-th factor $\Gr(d_i, n_i)$. We denote by $s^{(i)}$ the Stiefel coordinates of $\Gr(d_i, n_i)$. The polynomial ring of Stiefel coordinates for $\Gr(\mathbf{d}, \mathbf{n})$ is defined as
\[
k[s] \coloneqq \bigotimes_{i=0}^{N-1} k[s^{(i)}].
\]
Then the affine group $\GL_{\mathbf{d}} = \prod_{i=0}^{N-1} \GL_{d_i}$ acts on $k[s]$ on the right.

Similarly, let $P^{(i)}$ and $p^{(i)}$ be the Pl\"ucker matrix and Pl\"ucker coordinates of $\Gr(d_i, n_i)$, respectively. The multi-homogeneous polynomial ring for the product space $\Gr(\mathbf{d}, \mathbf{n})$ is defined as
\[
k[p] \coloneqq \bigotimes_{i=0}^{N-1} k[p^{(i)}].
\]

\begin{definition}
Let $Z \subset \Gr(\mathbf{d}, \mathbf{n})$ be a closed subscheme.
\begin{enumerate}
\item Let $J \subset k[p]/I_{\Gr(\mathbf{d}, \mathbf{n})}$ be a multi-homogeneous ideal. We write $V_p(J) = Z$ if for every local $k$-algebra $R$ and every $M \in \Gr(\mathbf{d}, \mathbf{n})(R)$, the condition $J = 0$ at $M$ holds if and only if $M \in Z(R)$.
\item Let $I \subset k[s]$ be a $\GL_{\mathbf{d}}$-stable ideal. We write $V_s(I) = Z$ if for every local $k$-algebra $R$ and every $M \in \Gr(\mathbf{d}, \mathbf{n})(R)$, the condition $I = 0$ at $M$ holds if and only if $M \in Z(R)$.
\end{enumerate}
\end{definition}

We now state the algorithms for converting ideals between Stiefel and Pl\"ucker coordinates.

\begin{definition}
Let
\[\Inc{\mathbf{d}}{\mathbf{n}} \coloneqq \prod_{i=0}^{N-1} \Inc{d_i}{n_i}.\]
\begin{enumerate}
\item (Pl\"ucker to Stiefel) For $g \in k[p]/I_{\Gr(\mathbf{d}, \mathbf{n})}$, let $g_s \in k[s]$ be the polynomial obtained by substituting each indeterminate $p_\beta^{(i)}$ with the minor $\det S_{\beta, \bullet}^{(i)}$. For an ideal $J$, we define $J_s$ as the ideal generated by $\{ g_s \mid g \in J \}$.
\item (Stiefel to Pl\"ucker) For $f \in k[s]$ and $\boldsymbol{\alpha} = (\alpha^{(0)}, \dots, \alpha^{(N-1)}) \in \Inc{\mathbf{d}}{\mathbf{n}}$, let
\[f_{p, \boldsymbol{\alpha}} \coloneqq f\!\left(P_{\alpha^{(0)}}^{(0)}, \dots, P_{\alpha^{(N-1)}}^{(N-1)}\right).\]
For a $\GL_{\mathbf{d}}$-stable ideal $I \subset k[s]$, we define $I_p$ as the ideal generated by 
\[\{ f_{p, \boldsymbol{\alpha}} \mid f \in I, \, \boldsymbol{\alpha} \in \Inc{\mathbf{d}}{\mathbf{n}} \}.\]
\end{enumerate}
\end{definition}

\begin{theorem}\label{thm:ideal_conversion}
Given a multi-homogeneous ideal $J \subset k[p]/I_{\Gr(\mathbf{d}, \mathbf{n})}$, we have $V_p(J) = V_s(J_s)$. Conversely, given a $\GL_{\mathbf{d}}$-stable ideal $I \subset k[s]$, we have $V_s(I) = V_p(I_p)$.
\end{theorem}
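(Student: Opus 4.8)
The plan is to reduce to the single-Grassmannian arguments of \Cref{prop:plucker_to_stiefel} and \Cref{thm:stiefel_to_plucker}, working one factor at a time, and to upgrade the two supporting lemmas (\Cref{lem:GL_dense} and \Cref{lem:submodule_vanish}) from a single $\GL_d$ to the product group $\GL_{\mathbf{d}}$. Throughout, \Cref{lem:subscheme_local_determination} guarantees that checking equality of $R$-points over all local $k$-algebras $R$ suffices, so it is enough to prove the two inclusions $V_p(\,\cdot\,)(R) \subset V_s(\,\cdot\,)(R)$ and its reverse in each case.

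First I would dispose of the direction $V_p(J) = V_s(J_s)$, which is essentially formal. Fix a local $k$-algebra $R$ and a point $M = (M^{(0)}, \dots, M^{(N-1)}) \in \Gr(\mathbf{d}, \mathbf{n})(R)$ with Stiefel matrices $S^{(i)}$. By construction the substitution $g \mapsto g_s$ replaces each $p_\beta^{(i)}$ by $\det S_{\beta, \bullet}^{(i)}$, which is exactly the value of the Pl\"ucker coordinate $p_\beta^{(i)}$ at $M$. Hence $g$ vanishes at $M$ in Pl\"ucker coordinates if and only if $g_s$ vanishes at $M$ in Stiefel coordinates, just as in the single-factor case. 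The assignment is well defined because the maximal minors of each $n_i \times d_i$ matrix satisfy the Pl\"ucker relations of the $i$-th factor, and multi-homogeneity of $J$ ensures $J_s$ is $\GL_{\mathbf{d}}$-stable (a right action of $B^{(i)} \in \GL_{d_i}(R)$ scales each factorwise-homogeneous piece by a unit).

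For the converse $V_s(I) = V_p(I_p)$ I would mirror \Cref{thm:stiefel_to_plucker}, but simultaneously over all factors. Since $R$ is local, each $M^{(i)}$ lies in some chart $U_{\alpha^{(i)}}$, so $M$ lies in the product chart indexed by $\boldsymbol{\alpha} = (\alpha^{(0)}, \dots, \alpha^{(N-1)}) \in \Inc{\mathbf{d}}{\mathbf{n}}$. If $M \in V_p(I_p)(R)$, then normalizing each factor by \Cref{lem:P_alpha_eq_S} gives $P_{\alpha^{(i)}}^{(i)} = S^{(i)}$, whence $f_{p, \boldsymbol{\alpha}} = f(S^{(0)}, \dots, S^{(N-1)}) = 0$ for every $f \in I$; this yields $V_p(I_p)(R) \subset V_s(I)(R)$. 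Conversely, if $M \in V_s(I)(R)$, then \Cref{thm:im_P_eq_M} applied factorwise gives $\im P_{\alpha^{(i)}}^{(i)} \subset M^{(i)}$, so $P_{\alpha^{(i)}}^{(i)} = S^{(i)} B^{(i)}$ for some $B^{(i)} \in \Mat_{d_i}(R)$.

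The main obstacle is the final vanishing $f(S^{(0)} B^{(0)}, \dots, S^{(N-1)} B^{(N-1)}) = 0$, which requires the product analogue of \Cref{lem:submodule_vanish}. I would establish it through the product analogue of \Cref{lem:GL_dense}: letting $A^{(i)}$ be matrices of indeterminates, the coordinate ring of $\GL_{\mathbf{d}, R}$ is the localization of $R[A^{(0)}, \dots, A^{(N-1)}]$ at the single element $\prod_i \det A^{(i)}$. Each $\det A^{(i)}$ is a non-zero-divisor exactly as in \Cref{lem:GL_dense}, and a product of non-zero-divisors in a commutative ring is again a non-zero-divisor, so the localization map is injective and any regular function vanishing on $\GL_{\mathbf{d}, R}$ vanishes on the whole product of matrix spaces. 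Applying this to the function $f(S^{(0)} A^{(0)}, \dots, S^{(N-1)} A^{(N-1)})$---which vanishes on $\GL_{\mathbf{d}, R}$ because $I$ is $\GL_{\mathbf{d}}$-stable and $f(S^{(0)}, \dots, S^{(N-1)}) = 0$---gives vanishing at $B = (B^{(0)}, \dots, B^{(N-1)})$. Thus $f_{p, \boldsymbol{\alpha}} = f(P_{\alpha^{(0)}}^{(0)}, \dots, P_{\alpha^{(N-1)}}^{(N-1)}) = 0$ at $M$, establishing $V_s(I)(R) \subset V_p(I_p)(R)$ and completing the proof.
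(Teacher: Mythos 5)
Your proposal is correct and takes essentially the same approach as the paper: the paper's proof simply states that the first direction is immediate from the definitions and that the second is ``analogous to \Cref{thm:stiefel_to_plucker}, applied to each component of the product space.'' What you have written is precisely that analogy spelled out, including the product-group versions of \Cref{lem:GL_dense} and \Cref{lem:submodule_vanish} obtained by localizing at the single non-zero-divisor $\prod_i \det A^{(i)}$, which is exactly the content the paper leaves implicit.
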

\begin{proof}
The first statement follows immediately from the definitions. For the second statement, the proof is analogous to that of \Cref{thm:stiefel_to_plucker}, applied to each component of the product space.
\end{proof}

\begin{remark}\label{rem:coordinate_change}
These results allow us to convert the defining equations of a subscheme between the two coordinate systems. To state this explicitly in terms of generators, we introduce the following notation. If $g_0, \dots, g_{u-1} \in k[\mathbf{p}] / I_{\Gr(\mathbf{d}, \mathbf{n})}$ generate a multi-homogeneous ideal $J$, we write
\[
V_p(g_0, \dots, g_{u-1}) \coloneqq V_p(J).
\]
Similarly, if $f_0, \dots, f_{v-1} \in k[\mathbf{s}]$ generate a $\GL_{\mathbf{d}}$-stable ideal $I$, we write
\[
V_s(f_0, \dots, f_{v-1}) \coloneqq V_s(I).
\]
We rephrase Theorem \ref{thm:ideal_conversion} in terms of generators. First, for the conversion from Pl\"ucker to Stiefel coordinates, we have
\[
V_p(g_0, \dots, g_{u-1}) = V_s((g_0)_s, \dots, (g_{u-1})_s).
\]
Second, for the conversion from Stiefel to Pl\"ucker coordinates, we have
\[
V_s(f_0, \dots, f_{v-1}) = V_p(\dots, (f_i)_{p, \boldsymbol{\alpha}}, \dots),
\]
where $\boldsymbol{\alpha}$ ranges over $\Inc{\mathbf d}{\mathbf n}$ and $i$ ranges over $[v]$.
\end{remark}

\begin{remark}
While these algorithms provide a systematic way to convert coordinates, the resulting systems of equations often contain a much larger number of polynomials than necessary, and these polynomials may have higher degrees. Consider the case of a single Grassmannian $\Gr(d,n)$. The complement of the affine chart $U_\alpha$ is defined by $V_p(p_\alpha)$. Converting this to Stiefel coordinates yields $V_s(\det S_{\alpha, \bullet})$, which is defined by a single polynomial of degree $d$. However, converting this back to Pl\"ucker coordinates results in a system generated by $(\det S_{\alpha, \bullet})_{p, \beta}$ for all $\beta \in \Inc{d}{n}$. This new system consists of $\binom{n}{d}$ polynomials, each of degree $d$.
\end{remark}

\subsection{Auxiliary Lemmas}\label{subsec:auxiliary_lemmas}

Before proceeding to the next section, we establish short lemmas regarding Grassmannians. Let $V$ be a vector space, $d \in [\dim V + 1]$ an integer, and $R$ a local $k$-algebra. Consider the Pl\"ucker embedding
\begin{align*}
\Gr(d, V)(R) &\hookrightarrow \mathbb{P}\big(\textstyle{\bigwedge^{d}} V \big)(R) \\
M &\mapsto \big[\textstyle{\bigwedge^{d}} M\big].
\end{align*}
Let $U \subset V$ be a subspace of dimension $c \leq d$. The Grassmannian $\Gr(d - c, V/U)$ parametrizes submodules $M \in \Gr(d,V)(R)$ containing $U_R$. To describe this embedding, fix a nonzero element $\omega_U \in \bigwedge^c U$. The injective linear map
\begin{align*}
\psi\colon \textstyle{\bigwedge^{d-c}} (V/U) &\to \textstyle{\bigwedge^d} V \\
\eta &\mapsto \tilde{\eta} \wedge \omega_U,
\end{align*}
where $\tilde{\eta}$ lifts $\eta$, induces the commutative diagram
\begin{equation}\label{diag:linear_section_grassmannian}
\begin{tikzcd}
\Gr(d - c, V/U) \arrow[r, hook] \arrow[d, hook]
  & \mathbb{P}\!\left(\textstyle{\bigwedge^{d-c}} (V/U)\right)
    \arrow[d, hook, "\mathbb{P}(\psi)"] \\
\Gr(d, V) \arrow[r, hook]
  & \mathbb{P}\!\left(\textstyle{\bigwedge^{d}}V \right)
\end{tikzcd}
\end{equation}
The map $\mathbb{P}(\psi)$ is well-defined and independent of the choice of $\omega_U$, so we refer to it as the \emph{natural linear embedding}. A point $p \in \mathbb{P}\big(\textstyle{\bigwedge^{d}} V\big)(R)$ lies in the image of $\mathbb{P}(\psi)$ if and only if it is divisible by $\omega_U$. Since $\omega_U$ is decomposable, this is equivalent to
\begin{equation}\label{eq:divisible_by_U}
p \wedge f = 0 \quad \text{for all } f \in U_R.
\end{equation}
\begin{remark}\label{rem:linear_embedding}
Concretely, let $e_0, \dots, e_{n-1}$ be the standard basis of $V = R^n$, and let $\{e_\alpha\}_{\alpha \in \Inc{d}{n}}$ be the induced basis of $\bigwedge^d V$. Let $f_0, \dots, f_{c-1}$ be a basis of $U$, and write $p = \sum_{\alpha} p_\alpha e_\alpha$. Then condition~\eqref{eq:divisible_by_U} becomes the system of linear equations
\[
\sum_{\alpha \in \Inc{d}{n}} p_\alpha (e_\alpha \wedge f_i) = 0 \quad \text{for all } i \in [c].
\]
This completely describes the natural linear embedding $\mathbb{P}\big(\textstyle{\bigwedge^{d-c}}(V/U)\big) \hookrightarrow \mathbb{P}\big(\textstyle{\bigwedge^{d}}V\big)$.
\end{remark}

\begin{lemma}\label{lem:linear_section_grassmannian}
In the ambient space $\mathbb{P}\big(\textstyle{\bigwedge^{d}}V\big)$, we have
\[
\Gr(d - c, V/U) = \Gr(d, V) \cap \mathbb{P}\!\left(\textstyle{\bigwedge^{d-c}} (V/U)\right).
\]
\end{lemma}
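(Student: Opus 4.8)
The plan is to reduce the claimed equality of closed subschemes of $\mathbb{P}(\bigwedge^d V)$ to an equality of $R$-points and then to settle it with one elementary fact about top wedge powers. By \Cref{lem:subscheme_local_determination} it suffices to check that the two sides have the same $R$-points for every local $k$-algebra $R$. Since the scheme-theoretic intersection on the right is the fibre product of the two closed immersions over $\mathbb{P}(\bigwedge^d V)$, an $R$-point of the right-hand side is precisely an $M \in \Gr(d,V)(R)$ whose Pl\"ucker image $p = [\bigwedge^d M]$ also lies in $\mathbb{P}(\bigwedge^{d-c}(V/U))(R)$. By the discussion preceding the lemma, this last condition is equivalent to divisibility of $p$ by $\omega_U$, and hence—$\omega_U$ being decomposable with span $U$—to the linear conditions $p \wedge f = 0$ for all $f \in U_R$, as in \eqref{eq:divisible_by_U}. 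On the other hand, by the functorial description of $\Gr(d-c, V/U)$ recalled above, an $R$-point of the left-hand side is exactly an $M \in \Gr(d,V)(R)$ with $U_R \subseteq M$. Thus everything reduces to the equivalence
\[
U_R \subseteq M \quad\Longleftrightarrow\quad \Big(\textstyle\bigwedge^d M\Big) \wedge f = 0 \ \text{ for all } f \in U_R.
\]

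I would obtain this from the following statement about a single element: for $M \in \Gr(d,V)(R)$ and $f \in V_R$, one has $f \in M$ if and only if $p \wedge f = 0$, where $p = \bigwedge^d M$. The forward direction is immediate, since if $f \in M$ then $p \wedge f$ is a wedge of $d+1$ elements of the rank-$d$ module $M$, hence vanishes. For the converse I would exploit that $R$ is local and that $M$, being an $R$-point of the Grassmannian, is a rank-$d$ direct summand of $V_R$: choose a basis $m_0, \dots, m_{d-1}$ of $M$ and extend it to a basis $m_0, \dots, m_{n-1}$ of $V_R$, so that $p = m_0 \wedge \cdots \wedge m_{d-1}$ up to a unit. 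Writing $f = \sum_i a_i m_i$, one computes $p \wedge f = \sum_{i \geq d} a_i\,(m_0 \wedge \cdots \wedge m_{d-1} \wedge m_i)$; since the vectors $m_0 \wedge \cdots \wedge m_{d-1} \wedge m_i$ for $i \geq d$ are part of a basis of $\bigwedge^{d+1} V_R$, they are $R$-linearly independent, forcing $a_i = 0$ for $i \geq d$ and hence $f \in M$. Applying this to each $f \in U_R$—equivalently, to the basis $f_0, \dots, f_{c-1}$ of $U$ used in \Cref{rem:linear_embedding}—yields the displayed equivalence, so the two descriptions of $R$-points agree.

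The main obstacle is precisely the converse direction of this single-element fact, and specifically making it valid over an arbitrary local $k$-algebra $R$ rather than over a field: the argument relies on $M$ being a direct summand, so that a basis of $M$ extends to a basis of $V_R$ and the resulting top wedge products remain part of a free basis of $\bigwedge^{d+1} V_R$. This is exactly what the Grassmannian functor of points guarantees, and it is why the reduction to local $R$ afforded by \Cref{lem:subscheme_local_determination} is the convenient framework. The forward inclusion $\Gr(d-c, V/U) \subseteq \Gr(d,V) \cap \mathbb{P}(\bigwedge^{d-c}(V/U))$ is then compatible scheme-theoretically with the commutative diagram \eqref{diag:linear_section_grassmannian}, but this becomes formal once the $R$-point identification above is established.
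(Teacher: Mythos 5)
Your proof is correct and takes essentially the same route as the paper: both reduce the scheme-theoretic equality, via $R$-points over local $k$-algebras, to the equivalence between $U_R \subseteq M$ and divisibility of the Pl\"ucker vector by $\omega_U$, i.e.\ the conditions $p \wedge f = 0$ for all $f \in U_R$ from \eqref{eq:divisible_by_U}. The only difference is that the paper asserts this equivalence as standard, while you prove it honestly over a local ring via the direct-summand/basis-extension argument, which is a legitimate filling-in of detail rather than a different method.
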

\begin{proof}
A submodule $M \in \Gr(d,V)(R)$ contains $U$ if and only if its Pl\"ucker coordinate $p$ is divisible by $\omega_U$. This is precisely the condition that $p$ lies in the linear subspace $\mathbb{P}\!\left(\textstyle{\bigwedge^{d-c}} (V/U)\right)$.
\end{proof}

To provide a variant of this result adapted for explicit computations, we introduce the following definition.

\begin{definition}\label{def:plucker_vector}
Let $R$ be a local $k$-algebra, and let $M \in \Gr(d,n)(R)$ be represented by an $n \times d$ Stiefel matrix $S$. Let $s_0, \dots, s_{d-1} \in R^n$ denote the columns of $S$. The \emph{Pl\"ucker vector} representing $M$ is the element
\[p \coloneqq s_0 \wedge \dots \wedge s_{d-1} \in \textstyle{\bigwedge^d} R^n.\]
We can also write it as
\[p = \sum_{\alpha \in \Inc{d}{n}} p_\alpha e_\alpha,\]
where $p_\alpha$ are the Pl\"ucker coordinates.
\end{definition}

\begin{lemma}\label{lem:submodule_criterion}
Let $R$ be a local $k$-algebra. Let $U \subset R^n$ be an $R$-submodule generated by the columns of a matrix
\[F = (f_0\ f_1\ \dots\ f_{c-1}) \in \Mat_{n \times c}(R).\]
Let $M \in \Gr(d,n)(R)$ be represented by a Stiefel matrix $S \in \Mat_{n \times d}(R)$, and let
\[p = \sum_{\alpha \in \Inc{d}{n}} p_\alpha e_\alpha \in \bigwedge^d R^n\]
be the associated Pl\"ucker vector. Then the following are equivalent.
\begin{enumerate}
    \item $U \subseteq M$.
    \item All $(d+1)\times(d+1)$ minors of the block matrix $(S\mid F)$ vanish.
    \item All $(d+1)\times(d+1)$ minors of $(S\mid F)$ formed by taking exactly $d$ columns from $S$ and one column from $F$ vanish.
    \item $p \wedge f_i = 0$ for all $i \in [c]$.
\end{enumerate}
\end{lemma}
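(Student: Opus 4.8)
The plan is to establish the four equivalences via the cycle $(1) \Rightarrow (2) \Rightarrow (3) \Rightarrow (4) \Rightarrow (1)$, since two of the four links are essentially formal. The implication $(2) \Rightarrow (3)$ is immediate, as the minors appearing in $(3)$ form a subset of those in $(2)$. For $(3) \Leftrightarrow (4)$, I would simply unwind the definition of the Pl\"ucker vector: because $S$ has exactly $d$ columns, a minor as in $(3)$ must use all of them together with a single column $f_i$, and for each fixed $i$ the collection of these $(d+1)\times(d+1)$ minors is precisely the coordinate list of $p \wedge f_i$. Concretely, writing $p = s_0 \wedge \dots \wedge s_{d-1}$ where $s_j$ are the columns of $S$, one has
\[
p \wedge f_i \;=\; \sum_{\gamma \in \Inc{d+1}{n}} \det\!\big( (S \mid f_i)_{\gamma, \bullet} \big)\, e_\gamma ,
\]
so the vanishing of all minors in $(3)$ for that $i$ is equivalent to $p \wedge f_i = 0$; quantifying over $i \in [c]$ gives $(3) \Leftrightarrow (4)$. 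Thus only the two genuinely geometric links $(1) \Rightarrow (2)$ and $(4) \Rightarrow (1)$ require real work.

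For $(1) \Rightarrow (2)$, I would use that since $S$ is a Stiefel matrix its columns form a free basis of $M$, so $R^d \to M$, $e_j \mapsto s_j$ is an isomorphism. Assuming $U \subseteq M$, every column of $F$ lies in $M = \im S$, hence there is a matrix $G \in \Mat_{d \times (d+c)}(R)$ with $(S \mid F) = S \cdot G$. The product $S \cdot G$ factors through a free module of rank $d$, so by the Cauchy--Binet formula (whose inner summation index has size $d < d+1$) every $(d+1)\times(d+1)$ minor of $(S\mid F)$ vanishes, which is exactly $(2)$.

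The main obstacle is $(4) \Rightarrow (1)$, where the standing locality hypothesis on $R$ is essential. Here I would use that $M \in \Gr(d,n)(R)$ is a direct summand of $R^n$ whose complement is locally free---hence free, since $R$ is local---so the basis $s_0, \dots, s_{d-1}$ of $M$ extends to a basis $s_0, \dots, s_{d-1}, t_0, \dots, t_{n-d-1}$ of $R^n$; equivalently, some $g \in \GL_n(R)$ carries the column space of $S$ to the span of the first $d$ standard basis vectors. Writing $f_i = \sum_j a_j s_j + \sum_l b_l t_l$, the relation $p \wedge f_i = \sum_l b_l\,(s_0 \wedge \dots \wedge s_{d-1} \wedge t_l)$ is a combination of \emph{distinct} basis vectors of $\bigwedge^{d+1} R^n$, so $p \wedge f_i = 0$ forces every $b_l = 0$, i.e.\ $f_i \in M$; as this holds for all $i$, we get $U \subseteq M$. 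I expect the only delicate point to be the justification that the complement is free, which is precisely where locality of $R$ enters and matches the convention of the section; the remainder is bookkeeping with the wedge basis. Finally, I would remark that the resulting equivalence $(1) \Leftrightarrow (4)$ recovers the divisibility criterion \eqref{eq:divisible_by_U}, making this lemma the explicit Stiefel-coordinate counterpart of \Cref{lem:linear_section_grassmannian}.
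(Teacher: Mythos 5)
Your proof is correct, but it takes a genuinely different route from the paper. The paper disposes of the lemma in one line by invoking the theory of Fitting ideals: the block matrix $(S \mid F)$ is a presentation matrix of $R^n/(M+U)$, its $(d+1)\times(d+1)$ minors generate $\Fitt_{n-d-1}\bigl(R^n/(M+U)\bigr)$, and all four conditions are equivalent to the vanishing of this ideal, citing \cite{Fitting} and \cite[Section~20.2]{Eisenbud2}. You instead establish the cycle $(1)\Rightarrow(2)\Rightarrow(3)\Rightarrow(4)\Rightarrow(1)$ by hand: Cauchy--Binet (with inner index set of size $d < d+1$) for $(1)\Rightarrow(2)$, the identification of the minors in $(3)$ with the coordinates of $p \wedge f_i$ in the standard basis of $\bigwedge^{d+1}R^n$ for $(3)\Leftrightarrow(4)$, and, for the crucial $(4)\Rightarrow(1)$, extension of the columns of $S$ to a basis of $R^n$, valid because $R^n/M$ is finitely generated projective over a local ring, hence free. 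The trade-off: the paper's proof is shorter and rests on a standard invariance theorem (Fitting ideals do not depend on the chosen presentation), whereas yours is self-contained, makes visible exactly where locality of $R$ enters (only in $(4)\Rightarrow(1)$; the implications $(1)\Rightarrow(2)\Rightarrow(3)\Rightarrow(4)$ hold over any ring), and supplies the explicit coordinate bookkeeping that the paper leaves implicit. One simplification you might note: the appeal to projectivity of the quotient is avoidable, since a Stiefel matrix has a unit minor $\det S_{\alpha,\bullet}$ for some $\alpha \in \Inc{d}{n}$, so the columns of $S$ together with the standard basis vectors $e_j$ for $j \notin \im \alpha$ already form a basis of $R^n$, which is exactly the basis your argument needs.
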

\begin{proof}
This follows immediately from the theory of Fitting ideals \cite{Fitting}\cite[Section~20.2]{Eisenbud2}. These conditions are equivalent to the vanishing of a $(n-d-1)$-th Fitting ideal:
\[\Fitt_{n-d-1}\bigl(R^n/(M+U)\bigr) = 0. \qedhere\]
\end{proof}

\begin{remark}
    By \Cref{lem:submodule_criterion}, in the Stiefel coordinates of $M$, the condition $U \subset M$ is defined by homogeneous polynomials of degree $d$. In the Pl\"ucker coordinates, however, the condition is defined by linear homogeneous polynomials.
\end{remark}

\section{Numerical Conditions}\label{sec:numerical}

Throughout the paper, let $X \hookrightarrow \mathbb{P}^r$ be a smooth connected projective variety over a field $k$, defined by homogeneous polynomials of degree at most $\delta$. Moreover, let $H \subset X$ denote a hyperplane section of $X$. In this section, we relax the assumption that $R$ is local, so $R$ denotes an arbitrary $k$-algebra. 

As previously mentioned, the main objective of this paper is to compute $\Pic^\tau X$ and its group scheme structure. Since the twisting morphism $\mathcal{L} \mapsto \mathcal{L}(m)$ induces an isomorphism $\Pic^\tau X \xrightarrow{\sim} \Pic_{mH} X$, it suffices to compute the subscheme $\Pic_{mH} X$. Because the class map given by
\begin{align*}
(\Div_{mH} X)(R) &\to (\Pic_{mH} X)(R) \\
[D] &\mapsto \mathcal{O}_{X_R}(D)
\end{align*}
is a projective bundle for sufficiently large $m$, we will construct $\Pic_{mH} X$ as a quotient of $\Div_{mH} X$. Furthermore, since the map
\begin{align*}
(\Div_{mH} X)(R) &\hookrightarrow \Gr(P_X(t-m), (S_X)_t)(R) \\
[D] &\mapsto (I_{D/X_R})_t
\end{align*}
induces a closed embedding for sufficiently large $t$, we will construct $\Div_{mH} X$ as a closed subscheme of the Grassmannian. 

Each of these steps depends on the integers $m$ and $t$ being sufficiently large. The purpose of this section is to provide explicit, computable lower bounds for $m$ and $t$ that guarantee these properties.

\subsection{The Projective Bundle \texorpdfstring{$\Div_{mH} X \to \Pic_{mH} X$}{Div\_mH X -> Pic\_mH X}}

We now seek to determine a lower bound for $m$ such that the morphism $\Div_{mH} X \to \Pic_{mH} X$ becomes a projective bundle. To state this bound, we recall the definition of the Gotzmann number.

\begin{definition}
A coherent sheaf $\mathcal{F}$ on $\mathbb{P}^r$ is said to be \emph{$m$-regular} if 
\[
H^i\!\left(\mathbb{P}^r, \mathcal{F}(m - i)\right) = 0
\]
for all integers $i > 0$. The smallest integer $m$ with this property is called the \emph{Castelnuovo–Mumford regularity} of $\mathcal{F}$.
\end{definition}

If a coherent sheaf $\mathcal{F}$ is $m$-regular, then it is also $\mu$-regular for every $\mu \ge m$ by \cite[p.~99]{Mumford}.

\begin{definition}
Given a polynomial $Q(s)$, the \emph{Gotzmann number} $\varphi(Q)$ of $Q$ is defined by
\begin{align*}
\varphi(Q) = \inf \{ m \,|\, &
\mathscr{I}_Z \text{ is $m$-regular for every closed subscheme } \\
& 
Z \subset \mathbb{P}^r \text{ with Hilbert polynomial } Q \}.
\end{align*}
For a projective scheme $Z \subset \mathbb{P}^r$, we write $\varphi(Z)$ for the Gotzmann number of the Hilbert polynomial of $\mathscr{I}_Z$.
\end{definition}

\begin{remark}
Note that $\varphi(Q)$ and $\varphi(Z)$ depend on the dimension $r$ of the ambient space, which is fixed throughout the paper. The Gotzmann number $\varphi(Q)$ is explicitly computable. Recall that any numerical polynomial $Q(s)$ can be uniquely written in the Macaulay representation
\[
Q(s) = \binom{s+a_1}{a_1} + \binom{s+a_2-1}{a_2} + \dots + \binom{s+a_\psi-(\psi-1)}{a_\psi},
\]
where $a_1 \ge a_2 \ge \dots \ge a_\psi \ge 0$ are integers \cite[Remark~C.11]{IaKa}. The coefficients $a_i$ are uniquely determined and can be computed via a simple greedy algorithm by setting $a_1$ to be the degree of $Q(s)$ and inductively applying the same procedure to the remainder $Q(s) - \binom{s+a_1}{a_1}$. The Gotzmann number is given by $\varphi(Q) = \psi$ \cite[Proposition~C.24]{IaKa}. In particular, if $Q$ is the Hilbert polynomial of a homogeneous ideal, then $\varphi(Q) > 0$. If $Q$ is not the Hilbert polynomial of a homogeneous ideal, then by definition $\varphi(Q)=\infty$.
\end{remark}

\begin{remark}
For a closed subscheme $Z \subset \mathbb{P}^r$, the Hilbert polynomial $Q_Z(s)$ can be computed algorithmically \cite[Algorithm~2.7]{MoraMoller}\cite[Algorithm~2.6]{BayerStillman}. Moreover, Hoa \cite[Theorem~6.4(i)]{Hoa} provided an explicit upper bound for the Gotzmann number $\varphi(Z)$.
\end{remark}

\begin{theorem}\label{thm:div_to_pic}
If $\nu = (\delta-1)\codim X$ and $m \ge \max\{ \varphi(\nu H), \varphi(X) \}$, then the class map
\[ \Div_{mH} X \to \Pic_{mH} X \]
is a projective bundle. Moreover, its fibers have constant dimension $P_X(m) - 1$.
\end{theorem}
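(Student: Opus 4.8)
The plan is to reduce the statement to a uniform cohomology-vanishing result for line bundles in the numerical class of $mH$, and then invoke cohomology and base change. First I would identify the fibers. Over a geometric point of $\Pic_{mH} X$ corresponding to a line bundle $\mathcal{L}$ with $\mathcal{L} \equiv mH$, the preimage under the class map consists of the effective divisors $D$ with $\mathcal{O}_X(D) \cong \mathcal{L}$, which is precisely the complete linear system $\mathbb{P}(H^0(X, \mathcal{L}))$. Thus the theorem amounts to two assertions: that $h^0(X, \mathcal{L})$ equals the constant $P_X(m)$ for every such $\mathcal{L}$, and that the map is étale-locally trivial.

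The core is therefore the vanishing $H^i(X, \mathcal{L}) = 0$ for all $i > 0$ and every $\mathcal{L} \equiv mH$. Granting this, since the Euler characteristic is a numerical invariant we obtain $h^0(X, \mathcal{L}) = \chi(\mathcal{L}) = \chi(\mathcal{O}_X(mH)) = P_X(m) > 0$, so the class map is surjective with fibers $\mathbb{P}(H^0(X, \mathcal{L}))$ of constant dimension $P_X(m) - 1$. To prove the vanishing I would write $\mathcal{L} = \mathcal{O}_X(mH) \otimes N$ with $N$ numerically trivial and represent $N^{-1}(\nu H) = \mathcal{O}_X(B)$ for an effective divisor $B \equiv \nu H$, so that $\mathcal{L} = \mathscr{I}_{B/X}((m+\nu)H)$. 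Pushing the sequence $0 \to \mathscr{I}_X \to \mathscr{I}_B \to i_* \mathscr{I}_{B/X} \to 0$ forward to $\mathbb{P}^r$ and twisting by $m+\nu$, the long exact sequence squeezes $H^i(X, \mathcal{L}) = H^i(\mathbb{P}^r, i_* \mathscr{I}_{B/X}(m+\nu))$ between $H^i(\mathbb{P}^r, \mathscr{I}_B(m+\nu))$ and $H^{i+1}(\mathbb{P}^r, \mathscr{I}_X(m+\nu))$. The crucial bookkeeping point is that every effective $B \equiv \nu H$ has the same Hilbert polynomial $P_X(t) - P_X(t-\nu)$, so $\mathscr{I}_B$ is $\varphi(\nu H)$-regular by definition of the Gotzmann number, while $\mathscr{I}_X$ is $\varphi(X)$-regular. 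Since $m \ge \max\{\varphi(\nu H), \varphi(X)\}$ and $\nu \ge 0$, both flanking groups vanish for every $i > 0$, yielding the desired vanishing.

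The key input, and the step I expect to be the main obstacle, is the effective representation itself: for every numerically trivial $N$ one must produce an effective divisor $B \equiv \nu H$ with $\mathcal{O}_X(B) = N^{-1}(\nu H)$, equivalently $H^0(X, N^{-1}(\nu H)) \neq 0$. This cannot be deduced from semicontinuity at the identity of $\Pic^\tau X$, since $h^0$ can only drop away from it; one genuinely needs nonvanishing that is uniform over the entire proper family of numerically trivial bundles. This is exactly where the hypothesis $\nu = (\delta-1)\codim X$ should enter: because $X$ is cut out by forms of degree at most $\delta$, the normal bundle $N_{X/\mathbb{P}^r}$, and hence $\omega_X = \omega_{\mathbb{P}^r}|_X \otimes \det N_{X/\mathbb{P}^r}$, is controlled in terms of $\delta$ and $\codim X$, producing a regularity estimate strong enough to force $N^{-1}(\nu H)$ to acquire a section for all numerically trivial $N$ at once. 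I would isolate this uniform nonvanishing as a separate lemma and treat it as the technical heart of the argument.

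Finally, for the projective-bundle structure I would globalize. A Poincaré line bundle $\mathcal{P}$ exists étale-locally on $\Pic_{mH} X$, and since $H^i(X_s, \mathcal{P}_s) = 0$ for all $i > 0$ at every geometric point $s$ while $h^0$ is constant, cohomology and base change (Grauert's theorem) shows that $\pi_* \mathcal{P}$ is locally free of rank $P_X(m)$ and commutes with base change. The class map is then identified with the projectivization $\mathbb{P}(\pi_* \mathcal{P}) \to \Pic_{mH} X$, which is a projective bundle; the ambiguity of $\mathcal{P}$ by a line bundle pulled back from the base is absorbed by the projectivization, so the local descriptions glue and the construction descends. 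This exhibits the class map as a projective bundle with fibers of constant dimension $P_X(m) - 1$, as claimed.
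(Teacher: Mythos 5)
Your overall architecture is sound, and most of your reductions are correct; in fact they reconstruct the content that the paper simply outsources. The paper's own proof is short: it cites \cite[Lemma~3.5]{Kwe} for the vanishing $H^i(X,\mathcal{L})=0$ ($i\ge 1$, $\mathcal{L}\equiv mH$), cites \cite[Theorem~3.1]{Kwe} (with a footnote) for the projective-bundle property, and then only carries out the fiber-dimension computation via numerical invariance of the Euler characteristic --- exactly the computation you give. Your squeeze of $H^i(X,\mathcal{L})$ between $H^i(\mathbb{P}^r,\mathscr{I}_B(m+\nu))$ and $H^{i+1}(\mathbb{P}^r,\mathscr{I}_X(m+\nu))$ is a correct proof of that vanishing \emph{modulo one input}: your observation that every effective $B\equiv\nu H$ has the same Hilbert polynomial as $\nu H$, so that $\mathscr{I}_B$ is $\varphi(\nu H)$-regular by the very definition of the Gotzmann number, uses the hypothesis $m\ge\max\{\varphi(\nu H),\varphi(X)\}$ exactly as intended.

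The genuine gap is the step you yourself flag as the technical heart: the uniform nonvanishing $H^0(X,N(\nu H))\neq 0$ for \emph{every} numerically trivial $N$, equivalently the representation $\mathcal{L}=\mathscr{I}_{B/X}((m+\nu)H)$ with $B$ effective and $B\equiv\nu H$. You correctly note that semicontinuity cannot give this, but the substitute you offer --- that the normal bundle, hence $\omega_X$, is ``controlled'' by $\delta$ and $\codim X$, ``producing a regularity estimate strong enough to force a section'' --- is a heuristic, not an argument, and it does not obviously close. Controlling $\det N_{X/\mathbb{P}^r}$ and the numerical class of $N(\nu H)$ controls $\chi(N(\nu H))$ via Riemann--Roch, and on a surface Serre duality then kills $h^2$ for degree reasons; but in dimension $\ge 3$ a positive Euler characteristic produces sections only after the intermediate cohomology of numerically trivial twists is killed, which is precisely the vanishing you are in the middle of proving --- the sketch is circular there. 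A regularity bound valid simultaneously for all numerically trivial line bundles, with the explicit value $\nu=(\delta-1)\codim X$, is exactly the content of the result in \cite{Kwe} that the paper quotes, and its proof requires genuine geometric input from the degree-$\delta$ equations cutting out $X$, not bookkeeping with $\omega_X$. So your proposal correctly reduces the theorem to this lemma but does not prove it. A secondary caveat: for the bundle structure you appeal to Grauert's theorem, which requires a reduced base; since a central point of this paper is that $\Pic^\tau X$, hence $\Pic_{mH}X$, may be non-reduced, you must instead argue from the fiberwise vanishing $H^i(X_s,\mathcal{P}_s)=0$ ($i>0$) and cohomology-and-base-change over an arbitrary Noetherian base. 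Your proof has that vanishing available (modulo the gap above), so this is repairable, but the appeal to Grauert as stated is not valid here.
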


\begin{proof}
Let $\mathcal{L}$ be a line bundle on $X$ numerically equivalent to $\mathcal{O}_X(m)$. By \cite[Lemma~3.5]{Kwe},
\[ H^i(X, \mathcal{L}) = 0 \]
for every $i \ge 1$. Hence, by \cite[Theorem~3.1]{Kwe} and the isomorphism $\Pic^\tau X \xrightarrow{\sim} \Pic_{mH} X$, the class map $\Div_{mH} X \to \Pic_{mH} X$ is a projective bundle.\footnote{Although the statement of \cite[Theorem~3.1]{Kwe} asserts only faithful flatness of $\Div_{mH} X \to \Pic^\tau X$, the proof shows that the morphism is in fact a projective bundle.}

To determine the fiber dimension, we change the base field $k$ to its algebraic closure. This ensures that every connected component of $\Pic_{mH} X$ contains a rational point, corresponding to a line bundle $\mathcal{L}$ on $X$ numerically equivalent to $\mathcal{O}_X(m)$. The fiber over $\mathcal{L}$ is isomorphic to $\mathbb{P}(H^0(X, \mathcal{L}))$. By the Hirzebruch--Riemann--Roch theorem, the Euler characteristic of a line bundle is a numerical invariant. Therefore,
\[ \chi(X, \mathcal{L}) = \chi(X, \mathcal{O}_X(m)). \]
Moreover, since $m \ge \varphi(X)$, we have
\[ H^i(X, \mathcal{O}_X(m)) = 0 \]
for every $i \ge 1$. Consequently,
\[ \dim H^0(X, \mathcal{L}) = \chi(X, \mathcal{L}) = \chi(X, \mathcal{O}_X(m)) = P_X(m). \qedhere \]
\end{proof}

\subsection{Embedding the Hilbert Scheme into a Grassmannian}

We now explain how $\Div_{mH} X$ can be embedded into a large Grassmannian, which allows us to handle this moduli space explicitly. With $Q = Q_{mH}$, consider the sequence of natural maps
\[
\Div_{mH} X \hookrightarrow \Div^Q X \hookrightarrow \Hilb^Q X \hookrightarrow \Hilb^Q \mathbb{P}^r.
\]
The first map is well-defined since the Hilbert polynomial is a numerical invariant. All of these maps are closed embeddings, and the first two are also open immersions. Thus, this sequence allows us to utilize the embedding of $\Hilb^Q \mathbb{P}^r$ into a Grassmannian.

\begin{theorem}[Gotzmann]\label{thm:gotzmann}
Let $Q$ be a polynomial. If $t \ge \varphi(Q)$, then there exists a natural closed embedding
\begin{align*}
\Hilb^Q \mathbb{P}^r &\hookrightarrow \Gr(Q(t), S_t) \\
[Z] &\mapsto (I_Z)_t.
\end{align*}
\end{theorem}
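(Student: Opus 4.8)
The plan is to construct the embedding via the universal property of the Grassmannian and then verify it is a closed immersion using the Gotzmann Persistence Theorem. First I would set up the natural transformation on $R$-points. Given a flat family $Z \subset \mathbb{P}^r_R$ with Hilbert polynomial $Q$, the key input is that for $t \geq \varphi(Q)$, the ideal sheaf $\mathscr{I}_Z$ is $t$-regular by the definition of the Gotzmann number. Castelnuovo--Mumford regularity then guarantees that $(I_Z)_t = H^0(\mathbb{P}^r_R, \mathscr{I}_Z(t))$ is a locally free $R$-module of rank $Q(t)$, that its formation commutes with base change, and that it is a direct summand of $S_t \otimes_k R$ (equivalently, $H^1(\mathscr{I}_Z(t)) = 0$ so the quotient $(S_Z)_t$ is also locally free). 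This produces a well-defined point of $\Gr(Q(t), S_t)(R)$, giving the morphism $\Hilb^Q \mathbb{P}^r \to \Gr(Q(t), S_t)$.

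Next I would address injectivity and the scheme-theoretic structure. Since $\mathscr{I}_Z$ is $t$-regular, it is generated in degrees $\leq t$, so the degree-$t$ piece $(I_Z)_t$ determines the full saturated ideal $I_Z$, hence recovers $Z$. This shows the map is a monomorphism on points. To promote this to a closed immersion, the essential tool is Gotzmann's Persistence Theorem: a point $W \in \Gr(Q(t), S_t)(R)$ lies in the image precisely when the subspace $W \subset S_t \otimes R$, together with its image $S_1 \cdot W \subset S_{t+1} \otimes R$ under multiplication, has the expected corank $Q(t+1)$ dictated by the Hilbert polynomial $Q$. Persistence says that if the growth from degree $t$ to $t+1$ is minimal (Macaulay-minimal, consistent with $Q$), then the ideal generated by $W$ continues to have Hilbert function agreeing with $Q$ in all higher degrees, so it defines a subscheme with Hilbert polynomial $Q$. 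This cuts out the image as a closed subscheme of the Grassmannian.

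The main obstacle I expect is verifying that the functorial image is genuinely a \emph{closed} subscheme with the correct scheme structure, rather than merely a bijection on geometric points. This requires the full strength of Gotzmann persistence in its scheme-theoretic (flat) form: one must show that the locally closed conditions defining minimal growth are in fact closed, and that over the resulting locus the rank conditions force flatness of the universal quotient family. Concretely, the subtle point is checking that the condition ``$\dim_R(S_1 \cdot W) \leq \binom{t+1+r}{r} - Q(t+1)$'' is closed (an upper-semicontinuity / maximal-minors condition on the multiplication map), and then invoking persistence to conclude that this single degree-$(t+1)$ constraint already guarantees $Q$ is the Hilbert polynomial in all degrees. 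Since the statement attributes this result to Gotzmann, I would cite \cite[Satz]{Got} for the persistence input and simply assemble these pieces; the regularity bookkeeping (base-change compatibility and local freeness of $(I_Z)_t$) is routine once $t \geq \varphi(Q)$ is fixed.
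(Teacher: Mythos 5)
Your plan is, in substance, the proof of the result the paper relies on: the paper's own proof of \Cref{thm:gotzmann} is a one-line citation to Gotzmann \cite{Got} and to the exposition in \cite[Proposition~C.29]{IaKa}, and the argument given there is exactly your two steps --- $t$-regularity makes $(I_Z)_t$ a rank-$Q(t)$, base-change-compatible subbundle of $S_t \otimes_k R$, which defines the morphism, and Gotzmann persistence identifies the image with the closed locus where the multiplication map into degree $t+1$ has small rank. So the route is the same; the paper merely outsources it to the references.

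One error needs fixing before your sketch is correct. In the paper's conventions, $Q$ is the Hilbert polynomial of the \emph{ideal} sheaf, $Q_Z(s) = \chi(\mathscr{I}_Z(s))$; that is precisely why the target is $\Gr(Q(t), S_t)$ and $(I_Z)_t$ has rank $Q(t)$, which you use correctly in your first paragraph. But in the closed condition you silently switch to the quotient convention: the image must be cut out by $\dim_R(S_1 \cdot W) \leq Q(t+1)$ --- equivalently, the vanishing of all minors of size $Q(t+1)+1$ of the multiplication matrix, exactly as in \Cref{lem:hilb_eq} later in the paper --- not by $\dim_R(S_1 \cdot W) \leq \binom{t+1+r}{r} - Q(t+1)$, and correspondingly $S_1 \cdot W$ has expected \emph{rank} $Q(t+1)$, not corank $Q(t+1)$. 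As written, your inequality bounds $\dim_R(S_1 \cdot W)$ by $P(t+1) = \dim (S_Z)_{t+1}$; since Macaulay's growth bound forces $\dim_R(S_1 \cdot W) \geq Q(t+1)$ for $t \geq \varphi(Q)$, and $Q(t+1)$ generally exceeds $P(t+1)$ (the ideal polynomial has degree $r$, the subscheme polynomial degree $\dim Z < r$), your locus would typically be empty and in particular would not contain the image of the Hilbert scheme. With the ranks straightened out, the rest of your argument --- Macaulay's lower bound turning the closed inequality into an equality, persistence then giving flatness and the correct Hilbert function in all degrees $\geq t$ --- is the standard and correct completion.
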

\begin{proof}
This follows from \cite{Got}; see \cite[Proposition~C.29]{IaKa} for a modern exposition.
\end{proof}

If $Q$ is not the Hilbert polynomial of any ideal sheaf $\mathscr{I}_Z \subset \mathcal{O}_{\mathbb{P}^r}$, then $\varphi(Q) = \infty$, and the theorem holds vacuously. Moreover, for $t \ge \varphi(Q)$, we may also regard $\Hilb^Q X$ as a closed subscheme of the Grassmannian $\Gr(Q(t), S_t)$. We now provide an alternative description of the embedding of $\Hilb^Q X$ into a Grassmannian.

\begin{lemma}\label{lem:hilb_X}
Let $Q$ be a polynomial, and assume that $t \ge \max\{\varphi(Q),\varphi(X)\}$. We regard $\Hilb^Q X$ and $\Hilb^Q \mathbb{P}^r$ as closed subschemes of $\Gr(Q(t), S_t)$. Then an $R$-submodule $M \in (\Hilb^Q \mathbb{P}^r)(R)$ of $(S_R)_t$ lies in $(\Hilb^Q X)(R)$ if and only if
\[(I_{X_R})_t \subseteq M.\]
\end{lemma}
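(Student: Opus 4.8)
The plan is to unwind what an $R$-point means on each side, reduce the claim to an inclusion of ideal sheaves, and then let Castelnuovo--Mumford regularity do the work. First I would use \Cref{thm:gotzmann}: since $t \ge \varphi(Q)$, an $R$-point $M \in (\Hilb^Q \mathbb{P}^r)(R)$ is the same datum as a closed subscheme $Z \subseteq \mathbb{P}^r_R$, flat over $R$ with Hilbert polynomial $Q$, together with the identification $M = (I_Z)_t = H^0(\mathbb{P}^r_R, \mathscr{I}_Z(t))$ as a rank-$Q(t)$ locally free submodule of $(S_R)_t$. Under the closed embedding $\Hilb^Q X \hookrightarrow \Hilb^Q \mathbb{P}^r$ induced by $X_R \hookrightarrow \mathbb{P}^r_R$, membership $M \in (\Hilb^Q X)(R)$ is precisely the condition that $Z$ factors through $X_R$, i.e.\ $Z \subseteq X_R$. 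Thus the lemma reduces to the equivalence $Z \subseteq X_R \iff (I_{X_R})_t \subseteq M$.

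The forward implication is immediate: if $Z \subseteq X_R$ as closed subschemes of $\mathbb{P}^r_R$, then $\mathscr{I}_{X_R} \subseteq \mathscr{I}_Z$, and taking global sections of the twist by $t$ gives $(I_{X_R})_t = H^0(\mathscr{I}_{X_R}(t)) \subseteq H^0(\mathscr{I}_Z(t)) = M$.

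For the reverse implication I would invoke regularity. Since $t \ge \varphi(X)$, the ideal sheaf $\mathscr{I}_X$ is $t$-regular on $\mathbb{P}^r_k$; as $X$ is flat over $k$ we have $\mathscr{I}_{X_R} = \mathscr{I}_X \otimes_k R$, and flat base change preserves the defining cohomological vanishing, so $\mathscr{I}_{X_R}$ is $t$-regular on $\mathbb{P}^r_R$. The key consequence \cite[p.~99]{Mumford} is that $\mathscr{I}_{X_R}(t)$ is globally generated; equivalently, the natural map
\[
H^0\!\left(\mathbb{P}^r_R, \mathscr{I}_{X_R}(t)\right) \otimes_R \mathcal{O}_{\mathbb{P}^r_R}(-t) \longrightarrow \mathscr{I}_{X_R}
\]
is surjective. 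Now assume $(I_{X_R})_t \subseteq M$. Since $(I_{X_R})_t = H^0(\mathscr{I}_{X_R}(t))$ and $M = H^0(\mathscr{I}_Z(t))$, every global section generating $\mathscr{I}_{X_R}$ already lies in $H^0(\mathscr{I}_Z(t))$ and hence defines, after twisting down, a map $\mathcal{O}_{\mathbb{P}^r_R}(-t) \to \mathscr{I}_Z \subseteq \mathcal{O}_{\mathbb{P}^r_R}$. The image of the displayed surjection therefore lands inside $\mathscr{I}_Z$, forcing $\mathscr{I}_{X_R} \subseteq \mathscr{I}_Z$, that is, $Z \subseteq X_R$.

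The step requiring the most care is the passage to an arbitrary base $R$ rather than a field or a local ring. I must ensure both that $M$ genuinely equals $H^0(\mathbb{P}^r_R, \mathscr{I}_Z(t))$ and that the $t$-regularity of $\mathscr{I}_{X_R}$, together with its global-generation consequence, survives base change from $k$. The former is the cohomology-and-base-change input underlying the Gotzmann embedding: because $t \ge \varphi(Q)$ kills the relevant higher cohomology, the formation of $(I_Z)_t$ commutes with base change and yields a subbundle with locally free quotient. The latter follows from flatness of $\Spec R \to \Spec k$ together with flatness of $X/k$, which give $\mathscr{I}_{X_R} = \mathscr{I}_X \otimes_k R$ and compatibility of $H^0$ with the base change. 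These are the genuine content of the argument and I would state them explicitly; everything else is formal.
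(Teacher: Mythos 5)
Your proof is correct and follows essentially the same route as the paper's: both identify $M$ with the degree-$t$ piece of the ideal of a closed subscheme $Z$ and use that $t \ge \varphi(X)$ forces $\mathscr{I}_X$ to be generated by $(I_X)_t$, so that the containment $Z \subseteq X_R$ can be tested in degree $t$ alone. Your write-up merely expands the paper's terse graded-ideal argument into sheaf-theoretic language, with the global-generation surjection and base-change bookkeeping made explicit.
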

\begin{proof}
Let $Z \subset \mathbb{P}^r_R$ be the closed subscheme defined by $M$, and let $I_M \subset S_R$ be the ideal generated by $M$. Then $Z \subset X_R$ if and only if
\[I_{X_R} \subset I_M.\]
Since $t \ge \varphi(X)$, the ideal sheaf $\mathscr I_X$ is generated by $(I_X)_t$. As a result, the condition above is equivalent to
\[(I_{X_R})_t \subseteq M. \qedhere\]
\end{proof}

\begin{theorem}\label{thm:hilb_X_embedding}
Let $Q$ be a polynomial, and assume that $t \ge \max\{\varphi(Q), \varphi(X)\}$. Then there exists a natural closed embedding
\begin{align*}
\Hilb^Q X &\hookrightarrow \Gr\big(Q(t) - Q_X(t), (S_X)_t\big) \\
[Z] &\mapsto (I_{Z/X})_t.
\end{align*}
\end{theorem}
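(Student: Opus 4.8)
The plan is to realize the desired embedding by restricting the ambient Grassmannian of \Cref{thm:gotzmann} to a natural linear sub-Grassmannian. Since $t \ge \varphi(Q)$, \Cref{thm:gotzmann} gives the closed embedding $\Hilb^Q \mathbb{P}^r \hookrightarrow \Gr(Q(t), S_t)$, $[Z] \mapsto (I_Z)_t$, and $\Hilb^Q X$ is a closed subscheme of the source. Since $t \ge \varphi(X)$, \Cref{lem:hilb_X} characterizes $\Hilb^Q X$ inside $\Gr(Q(t), S_t)$: an $R$-point $M \in (\Hilb^Q \mathbb{P}^r)(R)$ lies in $(\Hilb^Q X)(R)$ if and only if $(I_{X_R})_t \subseteq M$. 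This is exactly an incidence condition of the type treated in \Cref{subsec:auxiliary_lemmas}.

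First I would set $V = S_t$ and $U = (I_X)_t \subset S_t$. Because $\mathscr{I}_X$ is $t$-regular for $t \ge \varphi(X)$, higher cohomology vanishes and $\dim_k U = \chi(\mathscr{I}_X(t)) = Q_X(t)$; moreover $(S_X)_t = S_t / (I_X)_t = V/U$. \Cref{lem:linear_section_grassmannian} then identifies $\Gr(Q(t) - Q_X(t), (S_X)_t)$ with the closed subscheme $W \subset \Gr(Q(t), S_t)$ whose $R$-points are precisely the rank-$Q(t)$ submodules $M \subseteq (S_R)_t$ containing $U_R$, the identification sending such an $M$ to $M/U_R$.

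Next I would combine these two descriptions. Comparing $R$-points for every local $k$-algebra $R$ gives $(\Hilb^Q X)(R) = (\Hilb^Q \mathbb{P}^r)(R) \cap W(R)$, so by \Cref{lem:subscheme_local_determination} we obtain the scheme-theoretic identity $\Hilb^Q X = \Hilb^Q \mathbb{P}^r \cap W$ inside $\Gr(Q(t), S_t)$. In particular $\Hilb^Q X$ is a closed subscheme of $W$, and composing with the isomorphism $W \cong \Gr(Q(t) - Q_X(t), (S_X)_t)$ produces the asserted closed embedding. To see that the embedding is the stated assignment $[Z] \mapsto (I_{Z/X})_t$, I would trace the image of a point $(I_Z)_t$ through the isomorphism $M \mapsto M/U_R$, which lands on $(I_Z)_t/(I_X)_t$. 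Twisting the ideal-sheaf sequence $0 \to \mathscr{I}_X \to \mathscr{I}_Z \to \iota_*\mathscr{I}_{Z/X} \to 0$ by $\mathcal{O}(t)$ and using $H^1(\mathbb{P}^r, \mathscr{I}_X(t)) = 0$ (again from $t \ge \varphi(X)$) identifies this quotient with $H^0(X, \mathscr{I}_{Z/X}(t)) = (I_{Z/X})_t$ and simultaneously confirms that its rank is $Q(t) - Q_X(t)$.

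The routine parts are the Gotzmann-number bookkeeping and the cohomology computation over a field; the genuinely delicate point is that the identification $M/U_R = (I_{Z/X})_t$ must hold in families, i.e. over an arbitrary local $R$ rather than just over fields, where a naive appeal to cohomology and base change could fail. The way I would circumvent this is to phrase the entire argument through $R$-points of the functors and defer to \Cref{lem:subscheme_local_determination}: the natural linear embedding framework of \Cref{subsec:auxiliary_lemmas} already guarantees that $M/U_R$ is locally free of the correct rank once $U_R \subseteq M$, so no separate flatness or base-change argument for $H^0$ over $X_R$ is needed beyond \Cref{lem:hilb_X}. The one thing to watch is that the hypothesis $t \ge \max\{\varphi(Q), \varphi(X)\}$ is used in three distinct places — the Gotzmann embedding, the equality $\dim_k(I_X)_t = Q_X(t)$, and the vanishing of $H^1(\mathscr{I}_X(t))$ — so all three must be invoked to make the factorization legitimate.
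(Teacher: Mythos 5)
Your proposal is correct and follows essentially the same route as the paper's proof: the paper likewise combines \Cref{thm:gotzmann}, \Cref{lem:hilb_X}, and the identification $(S_X)_t \cong S_t/(I_X)_t$ obtained from $H^1(\mathbb{P}^r,\mathscr{I}_X(t)) = 0$, with the sub-Grassmannian interpretation of \Cref{lem:linear_section_grassmannian} left implicit (it is made explicit immediately afterwards in \Cref{lem:linear_section_hilbert}). The extra details you supply --- the dimension count $\dim_k (I_X)_t = Q_X(t)$, the functor-of-points reduction via \Cref{lem:subscheme_local_determination}, and the verification that the induced map is $[Z]\mapsto (I_{Z/X})_t$ via the sequence $0 \to \mathscr{I}_X \to \mathscr{I}_Z \to \iota_*\mathscr{I}_{Z/X} \to 0$ --- are precisely what the paper's terse proof leaves to the reader.
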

\begin{proof}
Since $t \ge \varphi(X)$, we have $H^1(\mathbb{P}^r, \mathscr{I}_X(t)) = 0$. The short exact sequence
\[
0 \to \mathscr{I}_X \to \mathcal{O}_{\mathbb{P}^r} \to j_* \mathcal{O}_X \to 0,
\]
where $j \colon X \to \mathbb{P}^r$ is the closed embedding, induces the exact sequence
\[
0 \to (I_X)_t \to S_t \to (S_X)_t \to 0.
\]
Thus, we may identify $(S_X)_t$ with the quotient $S_t / (I_X)_t$. The result now follows from \Cref{thm:gotzmann} and \Cref{lem:hilb_X}.
\end{proof}

Hence, \Cref{lem:hilb_X} and the diagram (\ref{diag:linear_section_grassmannian}) imply the following commutative diagram.

\begin{equation}\label{diag:linear_section_hilbert}
\begin{tikzcd}
\Hilb^Q X \arrow[r, hook] \arrow[d, hook]
  & \Gr\big(Q(t) - Q_X(t), (S_X)_t\big) \arrow[r, hook] \arrow[d, hook]
  & \mathbb{P}\!\left(\bigwedge^{Q(t) - Q_X(t)} (S_X)_t \right)
    \arrow[d, hook, "\mathbb{P}(\psi)"]\\
\Hilb^Q \mathbb{P}^r \arrow[r, hook]
  & \Gr(Q(t), S_t) \arrow[r, hook]
  & \mathbb{P}\!\left(\bigwedge^{Q(t)} S_t\right)
\end{tikzcd}
\end{equation}

\begin{lemma}\label{lem:linear_section_hilbert}
Let $Q$ be a polynomial, and assume that $t \ge \max\{\varphi(Q), \varphi(X)\}$. Then in the ambient space $\mathbb{P}(\bigwedge^{Q(t)} S_t)$, we have
\[\Hilb^Q X = \Hilb^Q \mathbb{P}^r \cap \mathbb{P}\!\left(\textstyle{\bigwedge^{Q(t) - Q_X(t)}} (S_X)_t \right).\]
\end{lemma}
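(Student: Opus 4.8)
The plan is to realize this identity as a direct instance of the linear-section description of sub-Grassmannians in \Cref{lem:linear_section_grassmannian}, after matching up the relevant parameters. Concretely, I would set $V = S_t$, $d = Q(t)$, and $U = (I_X)_t \subset V$, so that the ambient space $\mathbb{P}(\bigwedge^d V)$ is exactly $\mathbb{P}(\bigwedge^{Q(t)} S_t)$ and $\Gr(d, V) = \Gr(Q(t), S_t)$. The first thing to pin down is the dimension $c = \dim_k U$. Since $t \ge \varphi(X)$, the sheaf $\mathscr{I}_X$ is $t$-regular, so $H^i(\mathbb{P}^r, \mathscr{I}_X(t)) = 0$ for all $i \ge 1$; hence $c = \dim_k (I_X)_t = \chi(\mathscr{I}_X(t)) = Q_X(t)$, and the identification $S_t/(I_X)_t = (S_X)_t$ from the proof of \Cref{thm:hilb_X_embedding} gives $V/U = (S_X)_t$ and $d - c = Q(t) - Q_X(t)$. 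With these substitutions, \Cref{lem:linear_section_grassmannian} reads
\[
\Gr(Q(t) - Q_X(t), (S_X)_t) = \Gr(Q(t), S_t) \cap \mathbb{P}\!\left(\textstyle{\bigwedge^{Q(t)-Q_X(t)}}(S_X)_t\right)
\]
inside $\mathbb{P}(\bigwedge^{Q(t)} S_t)$, which identifies the linear slice on the right-hand side of the lemma with the sub-Grassmannian parametrizing submodules containing $U$.

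Next I would match the two descriptions of $\Hilb^Q X$. On one hand, \Cref{lem:hilb_X} characterizes $\Hilb^Q X$ as the locus of points $M \in \Hilb^Q \mathbb{P}^r(R)$ satisfying $(I_{X_R})_t \subseteq M$. On the other hand, the sub-Grassmannian above parametrizes exactly the $M \in \Gr(Q(t), S_t)(R)$ with $U_R \subseteq M$. To see that these conditions coincide, I need $(I_{X_R})_t = U_R$ for every local $k$-algebra $R$; this again uses $t \ge \varphi(X)$, since the vanishing $H^1(\mathbb{P}^r, \mathscr{I}_X(t)) = 0$ makes the formation of $(I_X)_t = H^0(\mathbb{P}^r, \mathscr{I}_X(t))$ commute with the base change $k \to R$ by cohomology and base change.

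Finally I would assemble the equality. Taking $R$-points of a scheme-theoretic intersection computes the intersection of subfunctors (closed immersions are monomorphisms, so the fiber product computes the set-theoretic intersection inside $Y(R)$), hence for every local $k$-algebra $R$,
\[
\left(\Hilb^Q \mathbb{P}^r \cap \mathbb{P}\!\left(\textstyle{\bigwedge^{Q(t)-Q_X(t)}}(S_X)_t\right)\right)(R) = \{M \in \Hilb^Q\mathbb{P}^r(R) \mid U_R \subseteq M\} = \Hilb^Q X(R),
\]
where the first equality uses the displayed identity together with the inclusion $\Hilb^Q \mathbb{P}^r \subseteq \Gr(Q(t), S_t)$ (which makes the $\Gr(Q(t), S_t)$ factor redundant), and the second is \Cref{lem:hilb_X} combined with $(I_{X_R})_t = U_R$. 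Since both sides are closed subschemes of $\mathbb{P}(\bigwedge^{Q(t)} S_t)$ with the same $R$-points for all local $R$, \Cref{lem:subscheme_local_determination} yields the claimed scheme-theoretic equality. The one genuine subtlety — and the only place where any real argument beyond bookkeeping is needed — is the base-change compatibility $(I_{X_R})_t = U_R$, since the identification of $\Hilb^Q X$ with a linear section hinges on the condition ``$M$ contains the degree-$t$ part of the ideal of $X$'' being the \emph{constant} condition $M \supseteq U_R$ rather than something varying with $R$; this is exactly what the regularity bound $t \ge \varphi(X)$ secures.
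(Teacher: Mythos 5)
Your proof is correct and follows essentially the same route as the paper: it combines \Cref{lem:hilb_X} (identifying $\Hilb^Q X$ as the locus in $\Hilb^Q \mathbb{P}^r$ of submodules containing $(I_X)_t$, i.e.\ its intersection with the sub-Grassmannian $\Gr(Q(t)-Q_X(t),(S_X)_t)$) with \Cref{lem:linear_section_grassmannian}. The extra details you supply --- the dimension count $\dim_k (I_X)_t = Q_X(t)$, the identification $(I_{X_R})_t = ((I_X)_t)_R$ (which in fact holds by flat base change over the field $k$, even without the regularity hypothesis), and the functor-of-points assembly --- are exactly the bookkeeping the paper leaves implicit.
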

\begin{proof}
\Cref{lem:hilb_X} implies that
\[\Hilb^Q X = \Hilb^Q \mathbb{P}^r \cap \Gr\big(Q(t) - Q_X(t), (S_X)_t\big).\]
The result now follows from \Cref{lem:linear_section_grassmannian}.
\end{proof}

\begin{remark}
If $Q = Q_{mH}$, we have $Q(s) - Q_X(s) = P_X(s-m)$. This follows from the short exact sequence
\[
0 \to \mathscr{I}_X \to \mathscr{I}_{mH} \to j_* \mathscr{I}_{mH/X} \to 0,
\]
where $j \colon X \hookrightarrow \mathbb{P}^r$ denotes the closed embedding. Since $\mathscr{I}_{mH/X} \cong \mathcal{O}_X(-m)$, the additivity of Hilbert polynomials implies
\[Q_X(s) - Q_{mH}(s) + P_X(s-m) = 0.\]
\end{remark}

\subsection{Numerical Hypotheses on \texorpdfstring{$m$}{m} and \texorpdfstring{$t$}{t}}

We now summarize the numerical assumptions required for the remainder of the paper. Let $\delta$ be the maximum degree of the defining equations of $X \hookrightarrow \mathbb{P}^r$. We define an auxiliary integer $\nu \coloneqq (\delta-1)\codim X$. The results of this section hold under the following conditions.
\[
    m \ge \max \{\varphi(\nu H), \varphi(X)\} \quad \text{and} \quad t \ge \max \{\varphi(mH), \varphi(X)\}
\]
Specifically, the following properties hold.
\begin{enumerate}
    \item The quotient map $\Div_{mH} X \twoheadrightarrow \Pic_{mH} X$ is a projective bundle (\Cref{thm:div_to_pic}).
    \item The embeddings of Hilbert schemes into Grassmannians fit into a commutative diagram (\ref{diag:linear_section_hilbert}).
\end{enumerate}
These conditions are sufficient to compute the moduli space $\Pic^\tau X$. However, the addition on $\Pic^\tau X$ is constructed as a quotient of the addition morphism of divisors $\sigma \colon \Div_{mH} X \times \Div_{mH} X \to \Div_{2mH} X$. This requires us to apply the results of this section to $\Div_{2mH} X$ as well. Therefore, throughout the rest of the paper, we assume the following stronger conditions.
\begin{equation}
    m \ge \max \{\varphi(\nu H), \varphi(X)\} \quad \text{and} \quad t \ge \max \{\varphi(2mH), \varphi(X)\}
    \label{cond:big_numbers}\tag{$\ast$}
\end{equation}
Note that the parameters $\delta$ and $\nu$ are introduced solely to define the bounds for $m$ and $t$, and will not be explicitly used in the subsequent sections.

\section{Computing \texorpdfstring{$\Div_{mH} X$}{Div\_mH X}}\label{sec:Div_mH_X}

The aim of this section is to present an algorithm that computes explicit homogeneous equations defining $\Div_{mH} X$ as a closed subscheme of a projective space. With $Q = Q_{mH}$, recall the following commutative diagram of embeddings.
\begin{equation}\label{diag:div_hilb_grass_proj}
\begin{tikzcd}[column sep=0.925em]
\Div_{mH} X \arrow[r, hook] &
\Div^Q X \arrow[r, hook] &
\Hilb^Q X \arrow[r, hook] \arrow[d, hook] &
\Gr(P_X(t-m),(S_X)_t) \arrow[r, hook] \arrow[d, hook] &
\mathbb{P}\!\left(\textstyle{\bigwedge^{P_X(t-m)}} (S_X)_t\right) \arrow[d, hook] \\
& &
\Hilb^Q \mathbb P^r \arrow[r, hook] & 
\Gr(Q(t),S_t) \arrow[r, hook] &
\mathbb{P}\!\left(\textstyle{\bigwedge^{Q(t)}} S_t\right)
\end{tikzcd}
\end{equation}

As we move towards the top-left in the diagram, additional defining polynomials are required to cut out each subsequent subscheme. Eventually, we will describe the sequential computation of these polynomials to establish the embedding
\[ \Div_{mH} X \hookrightarrow \mathbb{P}\!\left(\textstyle{\bigwedge^{P_X(t-m)}} (S_X)_t\right). \] 

\subsection{Explicit Equations for the Hilbert Scheme}

In this subsection, we present an algorithm for computing the embeddings appearing in the two rightmost squares of diagram \eqref{diag:div_hilb_grass_proj}. Among these, the only embedding that has not yet been discussed is the closed embedding
\[\Hilb^Q \mathbb{P}^r \hookrightarrow \Gr(Q(t), S_t),\]
together with the equations in Pl\"ucker coordinates that cut it out. Although this topic has been extensively studied in the literature, we briefly review it here in order to introduce the notation that will be used later. There are several approaches to describing the equations defining the Hilbert scheme. For simplicity, we follow the approach of Iarrobino and Kleiman \cite[Appendix~C]{IaKa}.

We denote the $(\dim S_t) \times Q(t)$ Stiefel matrix of $\Gr(Q(t), S_t)$ by
\[\Omega = (f_i)_{i \in [Q(t)]}.\]
Here, we regard each $f_i$ both as a column vector of $\Omega$ and as a homogeneous polynomial in $S_t$. Since we view the $f_i$ as polynomials, we may form another polynomial $x_j f_i \in S_{t+1}$ whose coefficients are linear in the Stiefel coordinates. By regarding each $x_j f_i$ as a column vector, we obtain a $(\dim S_{t+1}) \times (r+1)Q(t)$ matrix
\[\widehat{\Omega} = (x_j f_i)_{(i,j) \in [Q(t)]\times[r+1]}.\]

\begin{lemma}\label{lem:hilb_eq}
Let $R$ be a local $k$-algebra, and let $M \in \Gr(Q(t), S_t)(R)$ be a free $R$-module. Then $M \in (\Hilb^Q \mathbb{P}^r)(R)$ if and only if all minors of size $Q(t+1)+1$ of the matrix $\widehat{\Omega}$ vanish at $M$.
\end{lemma}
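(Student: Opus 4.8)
The plan is to recognise the evaluated matrix $\widehat{\Omega}|_M$ as a presentation of the cokernel of the degree-$(t+1)$ multiplication map, to translate the vanishing of all $(Q(t+1)+1)$-minors into a Fitting-ideal condition, and to match this against Gotzmann's functorial description of the Hilbert scheme underlying \Cref{thm:gotzmann}. Throughout I would let $P$ denote the polynomial with $P(s) + Q(s) = \binom{s+r}{r}$, so that $\Hilb^Q = \Hilb_P$; set $N \coloneqq \dim_k S_{t+1} = \binom{t+1+r}{r}$, so that $P(t+1) + Q(t+1) = N$.

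First I would fix a Stiefel matrix $\Omega$ representing $M$, whose columns $f_0, \dots, f_{Q(t)-1}$ form an $R$-basis of $M \subseteq (S_R)_t$. Evaluating $\widehat{\Omega}$ at $M$ produces the $N \times (r+1)Q(t)$ matrix over $R$ whose columns are the products $x_j f_i \in (S_R)_{t+1}$; these generate the image $S_1 \cdot M$ of the multiplication map $\mu \colon S_1 \otimes_R M \to (S_R)_{t+1}$. Hence $\widehat{\Omega}|_M$ is a presentation matrix of the cokernel $C \coloneqq (S_R)_{t+1}/(S_1 \cdot M)$, and by the standard description of Fitting ideals \cite[Section~20.2]{Eisenbud2}, the ideal $\Fitt_{P(t+1)-1}(C)$ is generated by the $(Q(t+1)+1)$-minors of $\widehat{\Omega}|_M$ (since $N - P(t+1) + 1 = Q(t+1)+1$), while $\Fitt_{P(t+1)}(C)$ is generated by its $Q(t+1)$-minors. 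Thus ``all $(Q(t+1)+1)$-minors vanish at $M$'' is exactly the condition $\Fitt_{P(t+1)-1}(C) = 0$.

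Next I would invoke the functorial content of \Cref{thm:gotzmann}: for $t \ge \varphi(Q)$ the Gotzmann construction \cite[Appendix~C]{IaKa} identifies $(\Hilb^Q \mathbb{P}^r)(R)$ with the set of $M \in \Gr(Q(t), S_t)(R)$ for which $C$ is locally free of rank $P(t+1)$, equivalently for which $S_1 \cdot M$ is a subbundle of rank $Q(t+1)$; the persistence theorem is precisely what guarantees that this one condition in degree $t+1$ forces flatness, and hence the correct Hilbert polynomial, in all higher degrees. A finitely generated module $C$ is locally free of rank $P(t+1)$ if and only if $\Fitt_{P(t+1)-1}(C) = 0$ and $\Fitt_{P(t+1)}(C) = R$. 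Comparing with the previous paragraph, it therefore remains only to show that the second equality is automatic.

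The crux — and the one genuinely non-formal step — is that $\Fitt_{P(t+1)}(C) = R$ holds for every $M \in \Gr(Q(t), S_t)(R)$, so that the Hilbert scheme is cut out by the single condition $\Fitt_{P(t+1)-1}(C) = 0$. I would verify this after tensoring with each residue field $\kappa(\mathfrak{p})$: since $M$ is a subbundle, $M \otimes \kappa(\mathfrak{p})$ is a $Q(t)$-dimensional subspace of $(S_{\kappa(\mathfrak{p})})_t$, and the Macaulay--Gotzmann growth bound — valid because $t \ge \varphi(Q)$ — gives $\dim_{\kappa(\mathfrak{p})} S_1 \cdot (M \otimes \kappa(\mathfrak{p})) \ge Q(t+1)$, i.e. $\dim_{\kappa(\mathfrak{p})} (C \otimes \kappa(\mathfrak{p})) \le P(t+1)$. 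Since Fitting ideals commute with base change, the image of $\Fitt_{P(t+1)}(C)$ in $\kappa(\mathfrak{p})$ is $\Fitt_{P(t+1)}(C \otimes \kappa(\mathfrak{p})) = \kappa(\mathfrak{p})$ for every prime $\mathfrak{p}$; an ideal of $R$ surjecting onto every residue field lies in no maximal ideal and hence equals $R$. Combining the three equivalences yields that $M \in (\Hilb^Q \mathbb{P}^r)(R)$ if and only if $\Fitt_{P(t+1)-1}(C) = 0$, which is the asserted vanishing of all size-$(Q(t+1)+1)$ minors of $\widehat{\Omega}$ at $M$. I expect the fiberwise Macaulay--Gotzmann lower bound to be the main obstacle, as it is the only place where the hypothesis $t \ge \varphi(Q)$ is essential and the only input not reducible to Fitting-ideal bookkeeping; everything else is formal.
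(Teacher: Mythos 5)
Your proposal is correct and takes essentially the same route as the paper: the paper's proof of this lemma is simply a citation of \cite[Theorem~C.30]{IaKa} and \cite[Section~4]{HaSt}, and the argument you give is precisely the one carried out there — reading $\widehat{\Omega}$ evaluated at $M$ as a presentation of $C=(S_R)_{t+1}/(S_1\cdot M)$, translating the minor condition into $\Fitt_{P(t+1)-1}(C)=0$, noting that $\Fitt_{P(t+1)}(C)=R$ is automatic by the fiberwise Macaulay--Gotzmann growth bound, and invoking persistence for the functorial identification of the image of the embedding of \Cref{thm:gotzmann}. No gaps beyond the external inputs the paper itself outsources to those references.
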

\begin{proof}
The proof is given in the proof of \cite[Theorem~C.30]{IaKa}; see also \cite[Section~4, pp.~754--755]{HaSt}.
\end{proof}

Consequently, under the embedding
\[\Hilb^Q\mathbb P^r \hookrightarrow \Gr(Q(t),S_t),\]
$\Hilb^Q \mathbb{P}^r$ is cut out by homogeneous polynomials of degree $Q(t+1) + 1$ in Stiefel coordinates. Moreover, since the column space of $\Omega$ is stable under the $\GL_{Q(t)}$-action, the column space of $\widehat{\Omega}$ is also $\GL_{Q(t)}$-stable. Thus, the defining ideal is stable under the action of $\GL_{Q(t)}$. Therefore, by \Cref{rem:coordinate_change}, the Hilbert scheme $\Hilb^Q \mathbb{P}^r$ is cut out by homogeneous polynomials of degree $Q(t+1) + 1$ in Pl\"ucker coordinates.

The embedding $\Gr(Q(t), S_t) \hookrightarrow \mathbb{P}(\textstyle{\bigwedge^{Q(t)}} S_t)$ is defined by the Pl\"ucker relations, which allows us to compute the bottom row of Diagram~\eqref{diag:div_hilb_grass_proj}. The rightmost vertical map in the diagram is determined by \Cref{rem:linear_embedding}. The remaining embeddings in the two rightmost squares are obtained via \Cref{lem:linear_section_hilbert} and \Cref{lem:linear_section_grassmannian}. This leads to the following corollary.

\begin{corollary}\label{cor:hilb_X}
    There exists an algorithm to compute the Hilbert scheme $\Hilb^Q X$ under the embeddings
    \[ \Hilb^Q X \hookrightarrow \mathbb{P}\!\left(\textstyle{\bigwedge^{P_X(t-m)}} (S_X)_t\right). \] 
\end{corollary}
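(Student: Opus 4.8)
The goal of Corollary~\ref{cor:hilb_X} is to assemble an algorithm from the pieces already established in this subsection, so the plan is essentially to trace through the diagram \eqref{diag:div_hilb_grass_proj} and record the defining equations at each stage, keeping all computations internal to the fixed ambient space $\mathbb{P}(\bigwedge^{Q(t)} S_t)$ where $Q = Q_{mH}$ and $Q(t) - Q_X(t) = P_X(t-m)$.

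First I would compute the equations defining $\Hilb^Q \mathbb{P}^r \hookrightarrow \Gr(Q(t), S_t)$. By \Cref{lem:hilb_eq}, in Stiefel coordinates this is cut out by the $(Q(t+1)+1) \times (Q(t+1)+1)$ minors of the explicit matrix $\widehat{\Omega} = (x_j f_i)$, whose entries are linear in the Stiefel coordinates. These generate a $\GL_{Q(t)}$-stable ideal, so by \Cref{rem:coordinate_change} (the Stiefel-to-Pl\"ucker conversion of \Cref{thm:ideal_conversion}) I can convert them into homogeneous equations of degree $Q(t+1)+1$ in the Pl\"ucker coordinates $p^{(1)}$ of $\Gr(Q(t), S_t)$. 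Together with the Pl\"ucker relations generating $I_{\Gr(Q(t), S_t)}$, this computes the entire bottom row of the diagram, realizing $\Hilb^Q \mathbb{P}^r$ inside $\mathbb{P}(\bigwedge^{Q(t)} S_t)$.

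Next I would descend to the top row. The vertical embedding $\mathbb{P}(\bigwedge^{P_X(t-m)} (S_X)_t) \hookrightarrow \mathbb{P}(\bigwedge^{Q(t)} S_t)$ is the natural linear embedding, whose defining linear equations are written out explicitly in \Cref{rem:linear_embedding} by choosing a basis $f_0, \dots, f_{c-1}$ of the subspace $U = (I_X)_t \subset S_t$ (here $c = Q_X(t)$) and imposing $\sum_\alpha p_\alpha (e_\alpha \wedge f_i) = 0$ for each $i$. By \Cref{lem:linear_section_hilbert}, intersecting $\Hilb^Q \mathbb{P}^r$ with this linear subspace recovers $\Hilb^Q X$ scheme-theoretically; concretely, the defining ideal of $\Hilb^Q X \hookrightarrow \mathbb{P}(\bigwedge^{P_X(t-m)} (S_X)_t)$ is obtained by adjoining these linear equations to the pulled-back Hilbert-scheme equations and then eliminating/restricting to the Pl\"ucker coordinates of the smaller Grassmannian $\Gr(P_X(t-m), (S_X)_t)$ via the chart-by-chart argument underlying \Cref{lem:linear_section_grassmannian}. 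Finally, adjoining the Pl\"ucker relations for $\Gr(P_X(t-m), (S_X)_t)$ produces the full system of homogeneous equations.

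Since every ingredient — the matrix $\widehat{\Omega}$, the minor computation, the coordinate conversion of \Cref{thm:ideal_conversion}, and the linear equations of \Cref{rem:linear_embedding} — is explicit and computable, the statement follows by simply concatenating these algorithms. I do not expect a genuine mathematical obstacle here, as the hypotheses $t \ge \max\{\varphi(Q), \varphi(X)\}$ (guaranteed by \eqref{cond:big_numbers}) are precisely what make \Cref{lem:hilb_eq} and \Cref{lem:linear_section_hilbert} applicable. The only point requiring care is bookkeeping: ensuring that the change of basis identifying $(S_X)_t$ with the quotient $S_t/(I_X)_t$ is carried out consistently, so that the linear embedding of \Cref{rem:linear_embedding} and the Grassmannian $\Gr(P_X(t-m), (S_X)_t)$ are expressed in compatible Pl\"ucker coordinates. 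This is a routine but necessary linear-algebra step, and it is the part where an implementation is most likely to introduce sign or indexing errors rather than conceptual difficulty.
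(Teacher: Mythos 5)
Your proposal is correct and follows essentially the same route as the paper: compute $\Hilb^Q \mathbb{P}^r$ in Stiefel coordinates via the Iarrobino--Kleiman equations of \Cref{lem:hilb_eq}, convert to Pl\"ucker coordinates using \Cref{thm:ideal_conversion}, and then cut down to $\Hilb^Q X$ by the linear equations of \Cref{rem:linear_embedding} with $U = (I_X)_t$, justified scheme-theoretically by \Cref{lem:linear_section_hilbert} and \Cref{lem:linear_section_grassmannian}. Your closing remark about consistently identifying $(S_X)_t$ with $S_t/(I_X)_t$ is a fair implementation caveat but, as you say, not a mathematical gap.
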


\begin{remark}
The defining equations derived from the rank condition on $\widehat{\Omega}$ are known as the Iarrobino--Kleiman equations. Both in Stiefel and Pl\"ucker coordinates, these equations have degree $Q(t+1)+1$, which is computationally prohibitive. Bayer \cite[Chapter~VI]{Bayer} proposed alternative equations of degree $r+1$ in Pl\"ucker coordinates. While these forms appear as higher-degree multiples of the Iarrobino--Kleiman polynomials in Stiefel coordinates, they are polynomials in the maximal minors of the Stiefel matrix. Thus, their degree reduces significantly to $r+1$ when expressed in Pl\"ucker coordinates. Although Bayer did not prove that these equations define the correct scheme structure, their scheme-theoretic correctness was later established by Haiman and Sturmfels \cite[Section~4]{HaSt}.

We also mention that Gotzmann \cite[Section~3]{Got} provided the first concrete description of the Hilbert scheme using his Persistence Theorem. However, we do not employ his approach here, as it characterizes the Hilbert scheme as a closed subscheme of a product of two Grassmannians. For another efficient description in characteristic zero, we refer the reader to \cite{BPMR}.
\end{remark}

\subsection{Identifying the Divisor Component}

We now proceed to compute the remaining series of embeddings
\[ \Div_{mH} X \hookrightarrow \Div^Q X \hookrightarrow \Hilb^Q X. \]
A key observation here is that both embeddings are open and closed.

\begin{lemma}
The subschemes $\Div^Q X$ and $\Div_{mH} X$ are both open and closed in the ambient space $\Hilb^Q X$.
\end{lemma}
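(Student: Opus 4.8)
The plan is to prove both assertions at once by exhibiting each divisor locus as a union of connected components of the ambient Hilbert scheme, i.e.\ as a subscheme that is simultaneously open and closed. I will handle $\Div^Q X$ first and then deduce the statement for $\Div_{mH} X$.

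\textbf{Openness of $\Div^Q X$.} For this I would invoke the standard fact that, for a flat projective family with smooth (hence regular) fibers, the locus of points whose corresponding subscheme is an effective Cartier divisor is open in the Hilbert scheme; equivalently, the relative effective Cartier divisor functor is an open subfunctor of the Hilbert functor. Applied to the universal family $\mathcal{Z} \subset \Hilb^Q X \times X$, the condition that $\mathcal Z$ be a relative Cartier divisor along a fiber---equivalently, that $\mathscr{I}_{\mathcal Z}$ be locally principal there---is open on the base. This uses only the smoothness of $X$, which guarantees that a fiber is a divisor exactly when its ideal is locally principal.

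\textbf{Closedness of $\Div^Q X$.} This is the main point, and the decisive input is the smoothness of $X$. Since $\Hilb^Q X$ is of finite type and $\Div^Q X$ is constructible (being open), it suffices to show stability under specialization, so by the valuative criterion I reduce to a discrete valuation ring $R$ with fraction field $K$ and a flat family $\mathcal{Z} \to \Spec R$ of subschemes of $X_R$ whose generic fiber $\mathcal{Z}_K$ is an effective Cartier divisor on $X_K$; I must show the closed fiber $\mathcal{Z}_0$ is an effective Cartier divisor on $X_0$. Because $\mathcal{Z}$ is flat (hence torsion-free) over the DVR $R$, it coincides with the scheme-theoretic closure of $\mathcal{Z}_K$ in $X_R$. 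Now $X_R = X \times_k R$ is regular, hence locally factorial, so effective Weil and Cartier divisors agree on $X_R$; the flat closure of the codimension-one subscheme $\mathcal{Z}_K$ is the Weil divisor obtained by closing up its codimension-one components with multiplicities, and is therefore an effective Cartier divisor $V(g)$ with $g$ a nonzerodivisor, and in particular has no embedded or vertical components over the closed point. Flatness over $R$ then forces the restriction $\bar g$ of $g$ to the integral variety $X_0$ to be nonzero (as $\mathcal{Z}_0$ has codimension one), hence a nonzerodivisor, so that $\mathcal{Z}_0 = V(\bar g)$ is an effective Cartier divisor. The hard part is precisely this assertion that the flat limit of Cartier divisors remains Cartier: everything hinges on the regularity of the total space $X_R$, and I would take care to verify that the flat closure is a genuine relative Cartier divisor rather than merely a pure-codimension-one subscheme.

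\textbf{From $\Div^Q X$ to $\Div_{mH} X$.} Finally, $\Div_{mH} X$ is the locus in $\Div^Q X$ parametrizing divisors numerically equivalent to $mH$. The numerical equivalence class of a divisor, i.e.\ its image in $\NS X$, is locally constant in a flat family of effective divisors, since intersection numbers against a fixed set of test curves are deformation invariants and $\NS X$ is discrete. Thus the class map $\Div^Q X \to \NS X$ is locally constant, and $\Div_{mH} X$ is the preimage of the single class $[\mathcal{O}_X(mH)]$, hence a union of connected components of $\Div^Q X$ and so open and closed therein. Combining this with the previous step shows that $\Div_{mH} X$ is open and closed in $\Hilb^Q X$ as well.
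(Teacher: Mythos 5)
Your proof is correct in substance but takes a genuinely different route from the paper. The paper disposes of the first assertion by citation: openness and closedness of $\Div^Q X \hookrightarrow \Hilb^Q X$ is exactly \cite[Theorem~1.13]{Kol}, and your openness-plus-valuative-criterion argument is in effect a proof of that theorem. The step you rightly flag as the crux---that the flat closure over the DVR is genuinely Cartier---is completed by a short commutative-algebra argument: flatness makes $\mathcal{O}_{\mathcal{Z}}$ torsion-free over $R$, so every associated prime of the closure meets the generic fibre and has height one; in the regular, hence factorial, local rings of $X_R$ an unmixed height-one ideal is principal, and primality of the uniformizer $\pi$ (which holds because the special fibre is smooth, so $A/\pi A$ is a domain) gives $\pi \nmid g$, whence $\mathcal{Z}_0 = V(\bar g)$ is Cartier. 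For the second assertion the paper again argues differently: it base changes to $\bar k$, where $\Div_{mH} X_{\bar k}$ is visibly a union of connected components of $\Div^Q X_{\bar k}$, and then descends the property of being an open and closed immersion along the fpqc cover $\Spec \bar k \to \Spec k$; you instead work directly over $k$ via deformation invariance of intersection numbers, which is equally valid, since the fibrewise degrees of $\mathcal{O}_{X_t}(D_t)$ on test curves are locally constant in a flat family and numerical equivalence is insensitive to field extension. One slip to correct: the image of a divisor in $\NS X$ is its \emph{algebraic} equivalence class, not its numerical class; what your intersection-number argument actually tracks is the image in $\NS X$ modulo torsion, which is precisely what the definition of $\Div_{mH} X$ requires. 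In short, the paper buys brevity by citing Koll\'ar and by using descent, while your argument is self-contained over $k$ but must supply the flat-limit lemma that the citation provides.
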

\begin{proof}
First, the embedding $\Div^Q X \hookrightarrow \Hilb^Q X$ is open and closed by \cite[Theorem~1.13]{Kol}. It remains to show that the inclusion $\iota \colon \Div_{mH} X \hookrightarrow \Div^Q X$ is open and closed.

Consider the base change to the algebraic closure $\bar{k}$. The scheme $\Div_{mH} X_{\bar{k}}$ consists of the union of connected components of $\Div^Q X_{\bar{k}}$ corresponding to the algebraic equivalence classes numerically equivalent to $mH$. Thus, the inclusion $\iota_{\bar{k}}$ is open and closed. Since the property of being an open and closed immersion satisfies fpqc descent \cite[Proposition~2.6.2]{EGA4_2}, the map $\iota$ is open and closed over $k$.
\end{proof}

As established in the previous lemma, $\Div_{mH} X$ is a union of connected components of $\Div^Q X$, which is in turn a union of connected components of $\Hilb^Q X$. Consequently, the problem reduces to collecting the specific connected components of $\Hilb^Q X$ that correspond to effective divisors numerically equivalent to $mH$. Thus, it suffices to sample a closed point from each irreducible component of $\Hilb^Q X$ and test whether the corresponding subscheme belongs to $\Div_{mH} X$.

\begin{lemma}\label{lem:point_sampling}
Let $Y \subset \mathbb{P}^r$ be a closed subscheme defined over a field $k$. There exists an algorithm that selects exactly one closed point from each irreducible component of $Y$.
\end{lemma}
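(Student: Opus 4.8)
The plan is to reduce to the integral, zero-dimensional case and then descend by hyperplane sections. First I would compute the minimal primes $\mathfrak p_1, \dots, \mathfrak p_s$ of the saturated ideal $I_Y \subset S$, using the primary-decomposition and radical algorithms available over $k$ \cite{Steel, GTZ, KrickLogar, Matsumoto}. The integral closed subschemes $V(\mathfrak p_i)$ are precisely the irreducible components of $Y$, so it suffices to give an algorithm that, for a single integral closed subscheme $W = V(\mathfrak p) \subset \mathbb P^r$, returns one closed point of $W$; applying it to each $\mathfrak p_i$ then selects exactly one closed point per component.

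For the integral case, I would compute $d = \dim W$ from the degree of the Hilbert polynomial of $S/\mathfrak p$ (computable via Gr\"obner bases), and recurse on $d$. If $d = 0$, then $W$ is a single closed point and I output $\mathfrak p$. If $d > 0$, then since $W \neq \emptyset$ the prime $\mathfrak p$ does not contain the irrelevant ideal, so some coordinate $x_j \notin \mathfrak p$; the hyperplane $H_j = \{x_j = 0\}$ thus does not contain $W$. Because $W$ is projective of positive dimension, $W \cap H_j$ is nonempty, and it has pure dimension $d-1$: the upper bound holds because $W \cap H_j \subsetneq W$ is a proper closed subset of an irreducible variety, and the lower bound is Krull's Hauptidealsatz applied to the image of $x_j$, a nonzero element of the domain $S/\mathfrak p$. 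I would then compute the minimal primes of $\mathfrak p + (x_j)$, select any one of them, say $\mathfrak q$, and recurse on the integral scheme $V(\mathfrak q)$ of dimension $d-1$. After $d$ iterations the process terminates at a zero-dimensional integral subscheme, that is, a single closed point.

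At the base case the output is a single closed point $y$ of $W$. Since $Y$ is of finite type over $k$, the residue field $\kappa(y)$ is a finite extension $L/k$ by the Nullstellensatz, so $y$ is genuine computable data: the defining prime already determines it, and if explicit coordinates are wanted I would split the zero-dimensional coordinate ring into fields by factoring univariate polynomials over $k$ \cite{DavenportTrager} and read off the coordinates of $y$ in $L$. Either representation constitutes the selected closed point, and the recursion guarantees precisely one point per original component.

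The hard part is not the bookkeeping but guaranteeing that each slice behaves correctly over our specific ground field. The two geometric inputs driving the descent---that a positive-dimensional projective variety meets every hyperplane (it cannot sit inside the affine complement $\mathbb P^r \setminus H_j$) and that cutting by the non-zero-divisor $x_j$ drops dimension by exactly one---are purely formal and characteristic-free. The real subtlety is \emph{effectivity}: at every step I must recompute saturations and re-decompose the sliced ideal over a field $k$ that may be finite or imperfect. This is exactly what the cited primary-decomposition, radical, and factorization algorithms supply, so the construction requires no perfect-field or characteristic-zero hypothesis and terminates after finitely many steps.
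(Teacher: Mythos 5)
Your proof is correct and takes essentially the same approach as the paper: compute the minimal primes of $I_Y$ to obtain the irreducible components, then iteratively slice each component with hyperplanes until it becomes zero-dimensional, and select one of the resulting closed points. The only difference is an implementation detail---you slice with coordinate hyperplanes $x_j \notin \mathfrak{p}$ and re-decompose to retain integrality at each step, whereas the paper chooses each new hyperplane transverse to the previous intersection; your variant is, if anything, more explicit about why such a hyperplane can always be found effectively over the given field.
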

\begin{proof}
Compute the primary decomposition of the defining ideal $I_Y$ to obtain the ideals of the irreducible components. Let $Z$ be such a component. We iteratively intersect $Z$ with hyperplanes, choosing each new hyperplane to be transverse to the intersection of the previous ones. When the resulting intersection $W$ becomes zero-dimensional, the associated primes of $I_W$ correspond to closed points of $Z$, one of which can be selected algorithmically.
\end{proof}

A sampled closed point of $\Hilb^Q X$ with residue field $L$ corresponds to a closed subscheme $Z \subset X_L$. The Pl\"ucker coordinates of this point in $\mathbb{P}\!\left(\textstyle{\bigwedge^{Q(t)}} S_t\right)$ explicitly determine a basis for the $L$-vector space $(I_Z)_t$. Since $t \ge \varphi(Z)$, the ideal generated by this basis defines the subscheme $Z \subset X_L$.

\begin{lemma}
Given a finite set of generators for an ideal $I_Z \subset S$ of a closed subscheme $Z$ of $X$, there exists an algorithm to determine whether $Z$ is an effective Cartier divisor on $X$.
\end{lemma}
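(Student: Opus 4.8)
The plan is to test directly whether the ideal sheaf $\mathscr{I}_{Z/X}$ is invertible, since by definition $Z$ is an effective Cartier divisor precisely when $\mathscr{I}_{Z/X}$ is locally free of rank one. I would translate this invertibility into a Fitting-ideal condition on the homogeneous coordinate ring $S_X = S/I_X$, which is effective. First I would form the homogeneous ideal $J \subset S_X$ equal to the image of $I_Z$ under the quotient $S \twoheadrightarrow S_X$, by reducing the given generators of $I_Z$ modulo $I_X$. Since $\widetilde{J} = \mathscr{I}_{Z/X}$, it then suffices to decide whether the sheaf $\widetilde{J}$ on $X = \Proj S_X$ is locally free of rank one.

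Next I would compute, by standard Gr\"obner-basis and syzygy methods over $S_X$, a graded free presentation
\[
\bigoplus_j S_X(-b_j) \xrightarrow{\ \phi\ } \bigoplus_{i=0}^{n-1} S_X(-a_i) \longrightarrow J \longrightarrow 0,
\]
and extract the Fitting ideals $\Fitt_0(J)$ and $\Fitt_1(J)$, generated respectively by the $n \times n$ and the $(n-1)\times(n-1)$ minors of a matrix representing $\phi$. By the Fitting-ideal criterion for local freeness \cite[Section~20.2]{Eisenbud2}, the sheaf $\widetilde{J}$ is invertible if and only if $\widetilde{\Fitt_0(J)} = 0$ and $\widetilde{\Fitt_1(J)} = \mathcal{O}_X$.

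Because $X$ is smooth and connected, it is integral, so $S_X$ is a domain; hence whenever $Z \neq X$ the ideal $J$ has rank one and $\Fitt_0(J) = 0$ automatically, the condition $\widetilde{\Fitt_0(J)} = 0$ reducing to the requirement that $J \neq 0$, i.e.\ $Z \neq X$. Thus the substantive test is $\widetilde{\Fitt_1(J)} = \mathcal{O}_X$, equivalently that the vanishing locus $V_+(\Fitt_1(J)) \subset X$ is empty. This is decidable: saturate $\Fitt_1(J)$ against the irrelevant ideal $(x_0, \dots, x_r)$ in $S_X$ and test whether the result is the unit ideal, or equivalently verify that the Hilbert function of $S_X/\Fitt_1(J)$ vanishes in all sufficiently large degrees. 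The algorithm reports that $Z$ is an effective Cartier divisor exactly when both Fitting conditions hold.

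The step requiring the most care is the correctness of the criterion rather than its execution: one must justify that invertibility of $\mathscr{I}_{Z/X}$ is captured exactly by $\Fitt_1(J) = \mathcal{O}_X$ together with $\Fitt_0(J) = 0$, even though the regular---hence factorial---local rings $\mathcal{O}_{X,x}$ admit many non-principal ideals, and that no independent test of equidimensionality or of embedded components is needed, since any nonempty invertible ideal sheaf on an integral scheme automatically defines a subscheme pure of codimension one. Concretely, at each point $\mathcal{O}_{X,x}$ is regular and $\mathcal{O}_{Z,x}$ is a local hypersurface quotient precisely when it is Cohen--Macaulay of codimension one, which is what the Fitting condition encodes through the Auslander--Buchsbaum formula. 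Every computational ingredient---reduction modulo $I_X$, syzygies and minors, saturation, and Hilbert-function comparison---is standard and effective over the fields considered in this paper.
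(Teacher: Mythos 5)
Your proposal is correct, but it takes a genuinely different route from the paper. The paper never tests invertibility of $\mathscr{I}_{Z/X}$ directly: it invokes the fact that on a \emph{smooth} variety a closed subscheme is an effective Cartier divisor if and only if it has pure codimension one, and then checks this by computing a primary decomposition of $I_Z$ and the Hilbert polynomial of $S/\mathfrak{p}$ for each associated prime $\mathfrak{p}$, requiring every one to have dimension $\dim X - 1$. You instead work straight from the definition, using the Fitting-ideal criterion for local freeness of rank one (the same reference, \cite[Section~20.2]{Eisenbud2}, that the paper uses elsewhere): compute a graded presentation of $J = I_Z S_X$ by syzygies, and test $\widetilde{\Fitt_0(J)} = 0$ together with emptiness of $V_+(\Fitt_1(J))$ via saturation. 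Both are sound and effective; your version avoids primary decomposition altogether (only Gr\"obner/syzygy computations and an emptiness test), and it would decide Cartier-ness of $Z$ on an \emph{arbitrary} projective scheme $X$, since invertibility of the ideal sheaf is the definition and smoothness enters your argument only through the cosmetic shortcut that $S/I_X$ is a domain (so $\Fitt_0(J)=0$ is automatic once $J \neq 0$). The paper's test is shorter to state and is natural in context, since primary decomposition is already used in the neighboring lemmas. Two small blemishes in your write-up, neither fatal: you write $S_X = S/I_X$, whereas the paper's $S_X$ is the section ring $\bigoplus_t H^0(X,\mathcal{O}_X(t))$ (these agree only in large degrees, but they define the same sheaves, so your argument is unaffected); and the closing appeal to Cohen--Macaulayness and the Auslander--Buchsbaum formula is unnecessary and somewhat muddled --- the Fitting criterion, applied pointwise and combined with the fact that Fitting ideals commute with localization, already gives exactly the equivalence you need.
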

\begin{proof}
Recall that on a smooth variety $X$, a closed subscheme is an effective Cartier divisor if and only if it has pure codimension one. To verify this, we first compute the primary decomposition of the ideal $I_Z$. For each associated prime $\mathfrak{p}$, we compute the Hilbert polynomial of the coordinate ring $S/\mathfrak{p}$ to determine its Krull dimension. The subscheme $Z$ is a divisor if and only if the Krull dimension is equal to $\dim X - 1$ for every associated prime $\mathfrak{p}$.
\end{proof}

\begin{lemma}
Given a divisor $D$ on $X$, there exists an algorithm to determine whether $D \equiv mH$.
\end{lemma}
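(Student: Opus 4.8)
The plan is to reduce the infinite family of conditions defining numerical equivalence to a pair of intersection numbers that can be extracted from Euler characteristics. Write $E = D - mH$ and $d = \dim X$, both of which are computable (the latter from the Hilbert polynomial of $X$). By definition $D \equiv mH$ means that $E$ is numerically trivial, i.e. $(E \cdot C) = 0$ for every curve $C \subset X$. The key idea is that, by the Hodge index theorem for the numerical intersection form (valid over an arbitrary field and in every characteristic, since the surface case is purely Riemann--Roch-theoretic and the general case follows by cutting with general hyperplane sections), the symmetric form $(D_1, D_2) \mapsto (D_1 \cdot D_2 \cdot H^{d-2})$ on $N^1(X)_{\mathbb{R}}$ has signature $(1, \rho-1)$; hence its restriction to the hyperplane $\{D : (D \cdot H^{d-1}) = 0\}$ is negative definite. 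Therefore, for $d \ge 2$,
\[
D \equiv mH \iff (E \cdot H^{d-1}) = 0 \ \text{ and } \ (E^2 \cdot H^{d-2}) = 0,
\]
since $(E \cdot H^{d-1}) = 0$ places $[E]$ in the hyperplane on which the form is negative definite, whence $(E^2 \cdot H^{d-2}) = 0$ forces $[E] = 0$ in $N^1(X)_{\mathbb{R}}$. For $d = 1$ the single condition $\deg E = 0$, equivalently $\chi(\mathcal{O}_D) = m \deg X$, suffices. It then remains to compute these intersection numbers algorithmically.

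To do so I would invoke Snapper's theorem: the function $(a,b) \mapsto \chi(\mathcal{O}_X(aE + bH))$ agrees with a numerical polynomial $G(a,b)$ of total degree $\le d$, whose top-degree part is $\sum_{i+j=d} \frac{(E^i \cdot H^j)}{i!\,j!}\, a^i b^j$. In particular $(E \cdot H^{d-1}) = (d-1)!\,[a b^{d-1}]\,G$ and $(E^2 \cdot H^{d-2}) = 2(d-2)!\,[a^2 b^{d-2}]\,G$. Thus it is enough to determine $G$ by interpolation from its values on a $(d+1)\times(d+1)$ integer grid, which over-determines a polynomial of degree $\le d$ in each variable.

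The only difficulty is that $E$ and its positive multiples need not be effective, so $\chi(\mathcal{O}_X(aE + bH))$ is not directly the Hilbert polynomial of a subscheme. I would circumvent this by evaluating only at points with $a = -e \le 0$, where the class becomes effective. Since $D$ is a Cartier divisor on the smooth variety $X$, the subscheme $eD \subset X$ has ideal sheaf $\mathscr{I}_{D/X}^{\,e}$, and the exact sequence $0 \to \mathcal{O}_X(-eD) \to \mathcal{O}_X \to \mathcal{O}_{eD} \to 0$ gives, with $c = em + b$,
\[
\chi\big(\mathcal{O}_X(-eD + c\,H)\big) = \chi\big(\mathcal{O}_X(c)\big) - \chi\big(\mathcal{O}_{eD}(c)\big).
\]
Both terms on the right are Hilbert polynomials of explicit projective subschemes, namely $X$ and the divisor $eD$ (whose homogeneous ideal is obtained from $I_{D/X}$ by taking the $e$-th power and saturating), hence computable by the Hilbert-polynomial algorithms already invoked. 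Evaluating $G(-e, b) = \chi(\mathcal{O}_X(-eD + (em+b)H))$ for $0 \le e \le d$ and $d+1$ values of $b$ pins down $G$, after which the two intersection numbers are read off from its coefficients, and the algorithm returns \emph{true} exactly when both vanish. I expect the main obstacle to be conceptual rather than computational: correctly justifying the Hodge-index criterion in arbitrary characteristic, and reconciling the non-effective class $E$ with effective representatives so that every Euler characteristic appearing in the interpolation is an honestly computable Hilbert polynomial.
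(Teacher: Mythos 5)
Your proposal is correct, but it takes a genuinely different route from the paper. The paper never touches intersection theory: it invokes the explicit upper bound $N$ on the order of $(\NS X)_{\tor}$ (from Theorem~4.12 of the reference [Kwe]) to replace numerical equivalence by algebraic equivalence --- $D \equiv mH$ if and only if $N!D$ and $N!mH$ are algebraically equivalent --- and then decides algebraic equivalence by testing whether $[N!D]$ and $[N!mH]$ lie in the same connected component of $\Div^{Q} X$ with $Q = Q_{N!mH}$, the components being computable by primary decomposition. You instead characterize numerical triviality of $E = D - mH$ by the vanishing of the two intersection numbers $(E \cdot H^{d-1})$ and $(E^2 \cdot H^{d-2})$, and compute these by Snapper interpolation from Euler characteristics, each of which you correctly realize as a difference of honest Hilbert polynomials of $X$ and of the subschemes $eD$; that part of the argument is complete and algorithmic, and the coefficient extraction is right. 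What your approach buys is efficiency and independence from the paper's machinery: it needs neither the astronomically large bound $N$ nor the divisor scheme of the huge class $N!mH$, only Hilbert polynomials of $d+1$ explicit subschemes. What it costs is the reliance on the generalized Hodge index theorem for the form $(D_1, D_2) \mapsto (D_1 \cdot D_2 \cdot H^{d-2})$ on $N^1(X)_{\mathbb{R}}$ in arbitrary characteristic. That statement is true and classical (it is the known codimension-one case of the Hodge standard conjecture), but your one-line justification --- ``the general case follows by cutting with general hyperplane sections'' --- is too glib: the induction requires the restriction map $N^1(X)_{\mathbb{Q}} \to N^1(S)_{\mathbb{Q}}$ to a general complete-intersection surface to be injective, and in positive characteristic this is not a formal consequence of Bertini; it uses weak Lefschetz in \'etale cohomology together with Matsusaka's theorem that numerical and homological equivalence coincide for divisors (or an appeal to Grothendieck--Lefschetz theory). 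So this criterion should be cited from the literature rather than re-derived by the sketched induction. With that citation in place, your algorithm is a valid, and in practice far cheaper, substitute for the paper's proof.
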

\begin{proof}
There is an explicit upper bound $N$ for the order of the torsion subgroup of $\NS X$, depending only on the degree $\delta$ and the dimension $r$ of $X$ \cite[Theorem~4.12]{Kwe}. Recall that $D \equiv mH$ if and only if the class of $D - mH$ is a torsion element in $\NS X$. This is equivalent to the condition that the class of $N!(D - mH)$ is zero in $\NS X$, or equivalently, that $N!D$ and $N!mH$ are algebraically equivalent.

Let $Q = Q_{N!mH}$. Each connected component of $\Div^Q X$ corresponds to a single element of $\NS X$, representing an algebraic equivalence class. Thus, the problem reduces to checking whether $[N! D]$ and $[N! mH]$ belong to the same connected component of $\Div^Q X$. As the connected components of $\Div^Q X$ are algorithmically computable by primary decomposition, the result follows.
\end{proof}

These results allow us to identify the connected components of $\Hilb^Q X$ corresponding to $\Div_{mH} X$. Combining this with the computation of $\Hilb^Q X$ presented in the previous subsection, we conclude this section by stating the following theorem.

\begin{theorem}\label{thm:compute_div}
There exists an algorithm to compute the defining homogeneous equations of the closed embedding
\[ \Div_{mH} X \hookrightarrow \mathbb{P}\!\left(\textstyle{\bigwedge^{P_X(t-m)}} (S_X)_t\right). \] 
\end{theorem}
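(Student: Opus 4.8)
The plan is to assemble the ingredients developed in this section. By \Cref{cor:hilb_X} we can already compute the homogeneous ideal cutting out $\Hilb^Q X$ inside $\mathbb{P}(\bigwedge^{P_X(t-m)}(S_X)_t)$, so the task reduces to isolating the sublocus $\Div_{mH} X$. The lemma above shows that this inclusion is open and closed, whence $\Div_{mH} X$ is a union of connected components of $\Hilb^Q X$; the problem thus becomes combinatorial, namely to decide for each component whether it belongs to $\Div_{mH} X$ and then to reassemble the chosen components into a single defining ideal.

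First I would compute a primary decomposition of the homogeneous ideal $I$ of $\Hilb^Q X$, which is algorithmic under our standing assumptions on $k$. This produces the minimal primes, hence the irreducible components, together with any embedded components. Using \Cref{lem:point_sampling} I would then sample one closed point from each irreducible component. Such a point, with residue field $L$, is specified by its homogeneous coordinates; forming the associated Pl\"ucker matrix and applying \Cref{thm:im_P_eq_M} recovers the submodule it represents, that is, a basis of the degree-$t$ piece of the ideal of the corresponding subscheme $Z \subset X_L$. Since $t \ge \varphi(Q)$ by the numerical hypotheses in force and $\varphi(Q) \ge \varphi(Z)$ by the definition of the Gotzmann number, the ideal generated by this basis recovers $Z$ scheme-theoretically.

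For each such $Z$ I would run the two decision procedures established above: the test for whether $Z$ is an effective Cartier divisor, i.e.\ of pure codimension one on the smooth variety $X$, and, when it is, the test for whether $Z \equiv mH$. Membership in $\Div_{mH} X$ is constant on connected components, and a fortiori on irreducible components, so the outcome at a single sampled closed point correctly classifies the whole component containing it. Moreover each embedded component is supported in a unique minimal prime, hence in a unique connected component, and therefore inherits the classification of that minimal prime, so no separate sampling for embedded primes is required. Intersecting the primary ideals of the components that pass both tests yields the saturated homogeneous ideal of the union of the selected connected components, which is precisely $\Div_{mH} X$.

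The step I expect to carry the real content is this final recombination at the level of scheme structure rather than of underlying sets, and here the open-and-closed property is indispensable. Were the inclusion merely closed, a single primary component could straddle both $\Div_{mH} X$ and its complement, so that intersecting whole primary ideals would destroy the correct scheme structure. Because the inclusion is open and closed, the connected-component decomposition refines the primary decomposition, each primary ideal lies entirely on one side, and the naive intersection returns the right answer. The residual points — that the decision tests are insensitive to the field of definition $L$ of the sampled point, and that the bound on $t$ fixed in \eqref{cond:big_numbers} indeed secures $t \ge \varphi(Z)$ for every relevant $Z$ — are already guaranteed by the numerical hypotheses of \Cref{sec:numerical}.
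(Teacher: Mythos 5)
Your proposal is correct and follows essentially the same route as the paper: compute $\Hilb^Q X$ via \Cref{cor:hilb_X}, use the open-and-closed property to reduce to classifying connected components, sample a closed point on each irreducible component via \Cref{lem:point_sampling}, recover the corresponding subscheme from its Pl\"ucker coordinates, and run the effective-Cartier-divisor and $\equiv mH$ tests. The only difference is that you spell out the final recombination step (grouping primary components by connected component and intersecting the selected ones) in more detail than the paper, which simply asserts that the identified components can be collected.
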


\section{Computing \texorpdfstring{$\Pic^\tau X$}{Pic\textasciicircumτ X}}\label{sec:Pic_tau_X}

The goal of this section is to present an algorithm for computing $\Pic^\tau X \cong \Pic_{mH} X$ and its group scheme structure. As mentioned previously, this scheme is constructed as a quotient of $\Div_{mH} X$. This quotient is defined by the scheme-theoretic linear equivalence relation
\[
\mathbf{L} \coloneqq \Div_{mH} X \times_{\Pic_{mH} X} \Div_{mH} X \hookrightarrow \Div_{mH} X \times \Div_{mH} X.
\]
Let $R$ be a local $k$-algebra in the rest of the paper. Then a pair $([D],[E]) \in (\Div_{mH} X \times \Div_{mH} X)(R)$ lies in $\mathbf{L}(R)$ if and only if $D$ and $E$ are linearly equivalent.

We now explain how to form the quotient of $\Div_{mH} X$ by the relation $\mathbf{L}$. Consider the projections to the $i$-th factors
\[
\pi_i\colon \mathbf{L} \to \Div_{mH} X
\]
for $i = 0, 1$. The morphism $\pi_1$ is a base change of the structure morphism $\Div_{mH} X \to \Pic_{mH} X$. Since the latter is a projective bundle, $\pi_1$ is flat. Thus, $\pi_1$ induces a morphism
\[
\phi\colon \Div_{mH} X \to \Hilb(\Div_{mH} X),
\]
which sends $[D] \in (\Div_{mH} X)(R)$ to the fiber
\[
\mathbf{L}_{[D]} \coloneqq \pi_0(\pi_1^{-1}(D)) \hookrightarrow (\Div_{mH} X)_R \hookrightarrow \mathbb{P}\left(\textstyle{\bigwedge^{P_X(t-m)}} (S_X)_t \right)_R.
\]

\begin{lemma}\label{lem:phi_image}
    The image of the morphism
    \[\phi\colon \Div_{mH} X \to \Hilb(\Div_{mH} X)\]
    is isomorphic to $\Pic_{mH} X$.
\end{lemma}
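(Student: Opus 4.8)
The plan is to factor $\phi$ through the class map $c \colon \Div_{mH} X \to \Pic_{mH} X$ and to prove that the induced morphism $\Phi \colon \Pic_{mH} X \to \Hilb(\Div_{mH} X)$ is a closed immersion; the image of $\phi$ will then be the closed subscheme $\Phi(\Pic_{mH} X) \cong \Pic_{mH} X$. The computation rests on a single identification. Since $\mathbf{L} = \Div_{mH} X \times_{\Pic_{mH} X} \Div_{mH} X$, base change along $\pi_1$ shows that the fiber $\pi_1^{-1}([D])$, embedded in $(\Div_{mH} X)_R$ via $\pi_0$, is exactly the fiber $c_R^{-1}(c([D]))$ of the class map; that is, $\phi([D]) = \mathbf{L}_{[D]} = c_R^{-1}(c([D]))$.

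First I would compute the kernel pair of $\phi$. Both $\Div_{mH} X \times_{\phi, \Hilb(\Div_{mH}X), \phi} \Div_{mH} X$ and $\mathbf{L}$ are closed subschemes of $\Div_{mH} X \times \Div_{mH} X$, using that $\Hilb(\Div_{mH}X)$ and $\Pic_{mH}X$ are separated, so by \Cref{lem:subscheme_local_determination} it suffices to compare them on $R$-points for local $R$. For such a pair $([D],[E])$ one has $\phi([D]) = \phi([E])$ iff $c_R^{-1}(c([D])) = c_R^{-1}(c([E]))$ as subschemes of $(\Div_{mH}X)_R$. Since $[D]$ always factors through its own fiber $c_R^{-1}(c([D]))$, equality of the two fibers places $[D]$ in $c_R^{-1}(c([E]))$, which forces $c([D]) = c([E])$; the converse is immediate. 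Hence the kernel pair of $\phi$ equals $\mathbf{L}$.

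By \Cref{thm:div_to_pic} the map $c$ is a projective bundle, hence faithfully flat and quasi-compact, and by construction $\mathbf{L}$ is its kernel pair; therefore $c$ exhibits $\Pic_{mH} X$ as the fppf quotient $\Div_{mH} X / \mathbf{L}$. Forming the image factorization of $\phi$ in the category of fppf sheaves, its image is $\Div_{mH} X / (\text{kernel pair}) = \Div_{mH} X / \mathbf{L} = \Pic_{mH} X$, so $\phi = \Phi \circ c$ for a monomorphism $\Phi \colon \Pic_{mH} X \hookrightarrow \Hilb(\Div_{mH} X)$. To promote $\Phi$ to a closed immersion, note that $\Pic_{mH} X \cong \Pic^\tau X$ is projective, hence proper over $k$, while $\Hilb(\Div_{mH} X)$ is separated, being a disjoint union of projective Hilbert schemes of the projective scheme $\Div_{mH} X$ of \Cref{thm:compute_div}. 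A morphism from a proper scheme to a separated scheme is proper, so $\Phi$ is a proper monomorphism, and a proper monomorphism is a closed immersion. Since $c$ is surjective, the image of $\phi$ equals $\Phi(\Pic_{mH} X)$, which is thus isomorphic to $\Pic_{mH} X$.

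The main obstacle I anticipate is the identification of the scheme-theoretic, not merely set-theoretic, image of $\phi$ with $\Pic_{mH} X$. This is where fppf descent is essential: one must know that the projective bundle $c$ of \Cref{thm:div_to_pic} is an \emph{effective} quotient by its kernel pair, so that the sheaf-theoretic image of $\phi$ is genuinely $\Pic_{mH} X$ and the descended map $\Phi$ is representable. Once this is in place, the kernel-pair computation supplies the monomorphism property and the properness argument finishes the proof formally.
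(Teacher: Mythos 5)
Your proof is correct, but it takes a different route from the paper: the paper's proof of \Cref{lem:phi_image} is a two-line citation, deferring entirely to the arguments of \cite[Lemma~9.4.9]{Kle} and \cite[Theorem~2.9]{AltmanKleiman1980}, whereas you give a self-contained descent-theoretic argument. Your skeleton — identify $\phi([D])$ with the fiber $c_R^{-1}(c([D]))$ of the class map, show the kernel pair of $\phi$ equals $\mathbf{L}$ (via \Cref{lem:subscheme_local_determination} and separatedness of the Hilbert scheme), invoke fppf effectivity of the projective bundle $c$ from \Cref{thm:div_to_pic} to realize $\Pic_{mH} X$ as the image sheaf, and then upgrade the resulting monomorphism $\Phi$ to a closed immersion via properness of $\Pic^\tau X$ and ``proper monomorphism $=$ closed immersion'' — is essentially the content of the cited results, so conceptually the two proofs are cousins; but relative to the paper's text yours makes explicit exactly which inputs are needed (representability and projectivity of $\Pic^\tau X$ from the classical existence theory, flatness of $c$, separatedness of the target), which is valuable since the paper's computation elsewhere leans on precisely these facts. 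One small point of hygiene: in the final step, to identify the \emph{scheme-theoretic} image of $\phi$ with $\Phi(\Pic_{mH} X)$ you should appeal to $c$ being faithfully flat (hence scheme-theoretically dominant), not merely surjective on points; since you established faithful flatness earlier and flagged the descent subtlety yourself, this is a wording fix rather than a gap.
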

\begin{proof}
  This follows from the arguments in \cite[Lemma~9.4.9]{Kle} and \cite[Theorem~2.9]{AltmanKleiman1980}.
\end{proof}

Thus, the goal of this section is to explicitly compute the image of the morphism $\phi$. Since the full Hilbert scheme $\Hilb(\Div_{mH} X)$ consists of infinitely many connected components, we first prove that the image of $\phi$ lies within the connected components corresponding to a single Hilbert polynomial $\Phi_{m,t}$. Subsequently, we explicitly compute the linear equivalence relation $\mathbf{L}$ and the graph of $\phi$, and finally determine the defining equations of the image of $\phi$ via projective elimination theory. Lastly, the group structure is recovered by quotienting the addition morphism $\sigma \colon \Div_{mH} X \times \Div_{mH} X \to \Div_{2mH} X$.

\subsection{Hilbert Polynomial of the Fibers}

Since the Hilbert polynomial is invariant under base change, we may assume that the base field $k$ is algebraically closed throughout this subsection. Let $[D] \in (\Div_{mH} X)(k)$ be a closed point, which corresponds to an effective divisor $D$ on $X$. The fiber of the projection $\pi_1\colon \mathbf{L}\to \Div_{mH} X$ over $[D]$ is isomorphic to the complete linear system
\[
\mathbf{L}_{[D]} \cong \mathbb{P}(H^0(X, \mathcal{O}_X(D))).
\]
Therefore, the Hilbert polynomial of the connected component containing $\phi([D])$ is identical to the Hilbert polynomial of the image of the embedding
\begin{equation}\label{eq:linear_system_embedding}
\mathbb{P}(H^0(X, \mathcal{O}_X(D))) \hookrightarrow \mathbb{P}\left(\textstyle{\bigwedge^{P_X(t-m)}} (S_X)_t \right).
\end{equation}
Thus, our task reduces to computing the Hilbert polynomial of this embedding. First, we show that the Hilbert polynomial of an embedding of a projective space is described by a simple formula.

\begin{lemma}\label{lem:hilb_proj}
Let $j \colon \mathbb{P}^\mu \to \mathbb{P}^\nu$ be an embedding defined by homogeneous polynomials of degree $\delta$. Then the Hilbert polynomial of the image $Y = j(\mathbb{P}^\mu)$ is
\[
P_Y(s) = \binom{\delta s + \mu}{\mu}.
\]
\end{lemma}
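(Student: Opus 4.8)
The plan is to reduce everything to a cohomology computation on $\mathbb{P}^\mu$ itself, transported through the isomorphism $j \colon \mathbb{P}^\mu \xrightarrow{\sim} Y$, and then to invoke the standard formula for the Euler characteristic of a line bundle on projective space.

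First I would record that $P_Y(s) = \chi(Y, \mathcal{O}_Y(s))$ by definition, where $\mathcal{O}_Y(s) = \mathcal{O}_{\mathbb{P}^\nu}(s)|_Y$. Since $j$ is an embedding, it is a closed immersion onto its image and hence restricts to an isomorphism $\mathbb{P}^\mu \xrightarrow{\sim} Y$. Therefore, for every $s$,
\[
P_Y(s) = \chi(Y, \mathcal{O}_Y(s)) = \chi\!\left(\mathbb{P}^\mu, j^*\mathcal{O}_Y(s)\right).
\]

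Next I would compute the pullback $j^*\mathcal{O}_Y(1)$. By hypothesis the morphism $j$ is given by homogeneous polynomials $g_0, \dots, g_\nu$ of degree $\delta$; as $j$ is an honest morphism these forms have no common zero on $\mathbb{P}^\mu$, so they furnish a base-point-free subsystem of $|\mathcal{O}_{\mathbb{P}^\mu}(\delta)|$. Consequently $j^*\mathcal{O}_{\mathbb{P}^\nu}(1) \cong \mathcal{O}_{\mathbb{P}^\mu}(\delta)$, and taking $s$-th tensor powers gives $j^*\mathcal{O}_Y(s) \cong \mathcal{O}_{\mathbb{P}^\mu}(\delta s)$ for all $s$. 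Combining this with the previous identity yields
\[
P_Y(s) = \chi\!\left(\mathbb{P}^\mu, \mathcal{O}_{\mathbb{P}^\mu}(\delta s)\right).
\]

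Finally I would apply the well-known fact that $\chi(\mathbb{P}^\mu, \mathcal{O}_{\mathbb{P}^\mu}(n)) = \binom{n+\mu}{\mu}$, interpreted as an identity of polynomials in $n$ (for large $n$ the higher cohomology vanishes and this is just $h^0$, and two polynomials agreeing for all large $n$ agree identically). Substituting $n = \delta s$ gives $P_Y(s) = \binom{\delta s + \mu}{\mu}$, as claimed. There is no substantial obstacle in this argument; the only point requiring genuine care is the translation of the phrase ``embedding defined by homogeneous polynomials of degree $\delta$'' into the clean statement $j^*\mathcal{O}(1)\cong\mathcal{O}(\delta)$, which rests on $j$ being a morphism (empty base locus) together with the fact that a closed immersion is an isomorphism onto its image.
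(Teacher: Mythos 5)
Your proof is correct and follows essentially the same route as the paper: identify $j^*\mathcal{O}_Y(1) \cong \mathcal{O}_{\mathbb{P}^\mu}(\delta)$, transport the computation through the isomorphism $\mathbb{P}^\mu \xrightarrow{\sim} Y$, and evaluate $\binom{\delta s + \mu}{\mu}$ on projective space. Your version is marginally more careful in two spots the paper leaves implicit (working with $\chi$ rather than $h^0$, and justifying the pullback formula via base-point-freeness), but the argument is the same.
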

\begin{proof}
Note that $j^*\mathcal{O}_Y(1) \cong \mathcal{O}_{\mathbb{P}^\mu}(\delta)$. Thus, for $s \ge 0$,
\[
P_Y(s) = \dim H^0(Y, \mathcal{O}_Y(s)) = \dim H^0(\mathbb{P}^\mu, \mathcal{O}_{\mathbb{P}^\mu}(\delta s)) = \binom{\delta s + \mu}{\mu}. \qedhere
\]
\end{proof}

By \Cref{thm:div_to_pic}, the dimension of the linear system $\mathbb{P}(H^0(X, \mathcal{O}_X(D)))$ is $\mu \coloneqq P_X(m) - 1$. To determine the Hilbert polynomial explicitly, it suffices to compute the degree of the embedding \eqref{eq:linear_system_embedding}. Observe that this embedding factors through the Grassmannian via the map
\begin{align*}
\mathbb{P}(H^0(X, \mathcal{O}_X(D))) &\to \Gr(P_X(t-m), (S_X)_t) \\
f &\mapsto f \cdot (I_{D/X})_t.
\end{align*}

\begin{lemma}\label{lem:map_degree}
The map between projective spaces
\begin{align*}
\mathbb{P}(H^0(X, \mathcal{O}_X(D))) &\to \mathbb{P}\left(\textstyle{\bigwedge^{P_X(t-m)}} (S_X)_t \right)\\
f &\mapsto \textstyle \bigwedge^{P_X(t-m)} f \cdot (I_{D/X})_t
\end{align*}
is homogeneous of degree $P_X(t-m)$.
\end{lemma}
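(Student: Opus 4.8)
The plan is to reduce the statement to a direct multilinearity computation in Pl\"ucker coordinates. Write $d \coloneqq P_X(t-m)$, which is precisely the dimension of $(I_{D/X})_t$ as a subspace of $(S_X)_t$, matching the rank in the Grassmannian $\Gr(P_X(t-m),(S_X)_t)$ through which the map factors. The Pl\"ucker coordinate of a $d$-dimensional subspace is an alternating multilinear expression in any spanning set of $d$ vectors, so the whole point will be to exhibit a spanning set whose members depend \emph{linearly} on $f$.

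First I would fix a $k$-basis $g_0, \dots, g_{d-1}$ of $(I_{D/X})_t = H^0(X, \mathscr{I}_{D/X}(t))$. For a section $f \in H^0(X, \mathcal{O}_X(D))$, multiplication by $f$ carries $\mathscr{I}_{D/X}(t) \cong \mathcal{O}_X(tH-D)$ into $\mathcal{O}_X(tH)$, inducing a linear map $m_f \colon (I_{D/X})_t \to (S_X)_t$, $g \mapsto fg$. The subspace $f \cdot (I_{D/X})_t$ is the image of $m_f$, spanned by $fg_0, \dots, fg_{d-1}$, and its Pl\"ucker coordinate is the top wedge
\[
\textstyle\bigwedge^{d}\big(f \cdot (I_{D/X})_t\big) = (fg_0) \wedge (fg_1) \wedge \dots \wedge (fg_{d-1}) \in \textstyle\bigwedge^{d}(S_X)_t.
\]
The decisive observation is that the multiplication pairing $H^0(\mathcal{O}_X(D)) \otimes (I_{D/X})_t \to (S_X)_t$, $(f,g) \mapsto fg$, is bilinear, so each factor $fg_i$ is linear in the coordinates of $f$. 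Expanding an arbitrary Pl\"ucker coordinate of the wedge above as the $d \times d$ determinant of the matrix whose $i$-th column is the coordinate vector of $fg_i$, every entry is linear in $f$, so each such determinant is homogeneous of degree $d$ in the coordinates of $f$. Equivalently, $f \mapsto \bigwedge^{d} m_f$ is polynomial of degree $d$, since passing to the top exterior power of a linear operator turns degree-$1$ matrix entries into degree-$d$ maximal minors. This gives homogeneity of degree $d = P_X(t-m)$.

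I do not expect a serious obstacle; the only point needing a line of care is that the map is genuinely defined on all of $\mathbb{P}(H^0(X,\mathcal{O}_X(D)))$, i.e. that the output wedge is nonzero, which amounts to $m_f$ having full rank $d$ for every nonzero $f$. This is already guaranteed by the factorization through $\Gr(P_X(t-m),(S_X)_t)$ noted just before the statement, and can be seen directly: for $f \neq 0$, multiplication by $f$ is injective on sections since $X$ is integral, so $fg_0, \dots, fg_{d-1}$ remain linearly independent and the top wedge does not vanish.
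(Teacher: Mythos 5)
Your proposal is correct and follows essentially the same route as the paper's proof: fix a basis $g_0,\dots,g_{d-1}$ of $(I_{D/X})_t$, realize the map as the projectivization of $f \mapsto (fg_0)\wedge\cdots\wedge(fg_{d-1})$, and conclude degree $d = P_X(t-m)$ from multilinearity of the wedge. Your explicit determinant expansion of the Pl\"ucker coordinates and the injectivity check for well-definedness are slightly more detailed than the paper's scaling argument, but they are refinements of the same idea, not a different method.
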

\begin{proof}
Let $\delta = P_X(t-m)$ and let $\{g_0, \dots, g_{\delta-1}\}$ be a basis of $(I_{D/X})_t$. 
The morphism in the statement is the projectivization of the map given by
\begin{align*}
H^0(X, \mathcal{O}_X(D)) &\to \textstyle{\bigwedge^{P_X(t-m)}} (S_X)_t\\
f &\mapsto (f g_0) \wedge (f g_1) \wedge \dots \wedge (f g_{\delta-1}).
\end{align*}
Since the wedge product is multilinear, for any scalar $\lambda \in k$, we have
\[
(\lambda f g_0) \wedge \dots \wedge (\lambda f g_{\delta-1}) = \lambda^{\delta} (f g_0 \wedge \dots \wedge f g_{\delta-1})
\]
which shows that the underlying map is homogeneous of degree $\delta$.
Therefore, the morphism is defined by homogeneous polynomials of degree $\delta = P_X(t-m)$.
\end{proof}

Combining these observations, we determine the Hilbert polynomial corresponding to the image of $\phi$.

\begin{theorem}\label{lem:hilb_Phi}
The image of the morphism $\phi \colon \Div_{mH} X \to \Hilb(\Div_{mH} X)$ lies in the connected components corresponding to the Hilbert polynomial
\[
\Phi_{m,t}(s) \coloneqq \binom{P_X(t-m)s + P_X(m) - 1}{P_X(m) - 1}.
\]
\end{theorem}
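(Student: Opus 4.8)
The plan is to assemble the three preceding lemmas rather than to prove anything substantially new: the genuine geometric content has already been isolated into \Cref{lem:hilb_proj} and \Cref{lem:map_degree}, so what remains is to check that their hypotheses apply to the morphism $\phi$ and to combine the resulting numerical data. Working over the algebraically closed base field as in the rest of this subsection, it suffices to compute the Hilbert polynomial at a single closed point $\phi([D])$, where $[D]$ corresponds to an effective divisor $D \equiv mH$; since the Hilbert polynomial is locally constant on $\Hilb(\Div_{mH} X)$, this pins it down on the whole component meeting the image of $\phi$. By the discussion preceding \Cref{lem:hilb_proj}, this Hilbert polynomial equals that of the image of the embedding \eqref{eq:linear_system_embedding}, i.e.\ of the composite
\[
\mathbb{P}(H^0(X, \mathcal{O}_X(D))) \to \Gr(P_X(t-m), (S_X)_t) \hookrightarrow \mathbb{P}\!\left(\textstyle{\bigwedge^{P_X(t-m)}} (S_X)_t\right),
\]
where the first map sends $f$ to $f \cdot (I_{D/X})_t$ and the second is the Pl\"ucker embedding.

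Next I would record the two numerical inputs demanded by \Cref{lem:hilb_proj}. The source $\mathbb{P}(H^0(X, \mathcal{O}_X(D))) \cong \mathbf{L}_{[D]}$ is a projective space of dimension $\mu = P_X(m) - 1$, which is precisely the constant fiber dimension of the projective bundle $\Div_{mH} X \to \Pic_{mH} X$ supplied by \Cref{thm:div_to_pic}. The degree of the homogeneous polynomials defining the composite is $\delta = P_X(t-m)$, read off directly from the wedge construction in \Cref{lem:map_degree}. Both quantities are manifestly independent of the chosen divisor $D$, so the resulting Hilbert polynomial is the same for every point in the image of $\phi$.

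The one hypothesis of \Cref{lem:hilb_proj} that is not immediate is that the composite is an \emph{embedding} rather than merely a degree-$\delta$ morphism; this is the step I expect to require the most care. I would verify it by factoring through $\Div_{mH} X$. The image $\mathbf{L}_{[D]}$ is the fiber of the projective bundle $\Div_{mH} X \to \Pic_{mH} X$ over the point $[\mathcal{O}_X(D)]$, hence a closed subscheme of $\Div_{mH} X$ isomorphic to $\mathbb{P}^\mu$; and $\Div_{mH} X$ itself embeds as a closed subscheme of the ambient space $\mathbb{P}(\textstyle{\bigwedge^{P_X(t-m)}} (S_X)_t)$ by \Cref{thm:compute_div}. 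Since a composite of closed immersions is again a closed immersion, the map in question is an embedding defined by polynomials of degree $P_X(t-m)$.

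With the embedding property established, \Cref{lem:hilb_proj} applies with $\mu = P_X(m) - 1$ and $\delta = P_X(t-m)$, yielding
\[
\Phi_{m,t}(s) = \binom{P_X(t-m)\,s + P_X(m) - 1}{P_X(m) - 1}
\]
for every connected component of $\Hilb(\Div_{mH} X)$ meeting the image of $\phi$, as claimed.
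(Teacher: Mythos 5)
Your proof is correct and takes essentially the same route as the paper: reduce to the Hilbert polynomial of the linear-system embedding \eqref{eq:linear_system_embedding}, read off $\mu = P_X(m)-1$ from \Cref{thm:div_to_pic} and $\delta = P_X(t-m)$ from \Cref{lem:map_degree}, then apply \Cref{lem:hilb_proj}. Your additional verification that the composite is genuinely a closed immersion (by identifying it with the fiber $\mathbf{L}_{[D]}$ of the projective bundle inside $\Div_{mH} X$, which is itself closed in the ambient projective space) is a detail the paper leaves implicit, and it is a sound addition.
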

\begin{proof}
This follows from applying \Cref{lem:hilb_proj} with $\mu = P_X(m) - 1$ and $\delta = P_X(t-m)$.
\end{proof}

Consequently, setting $\Phi = \Phi_{m,t}$, we may restrict the codomain of $\phi$ and write
\[
\phi \colon \Div_{mH} X \to \Hilb_{\Phi}(\Div_{mH} X).
\]

\subsection{Computing the Quotient}

We are now ready to compute $\Pic^\tau X$, which is the first main objective of this paper. This is achieved by computing the image of the morphism $\phi$. The first step is to provide an algorithm for computing
\[
\mathbf{L} = \Div_{mH} X \times_{\Pic_{mH} X} \Div_{mH} X \hookrightarrow \Div_{mH} X \times \Div_{mH} X.
\]
As usual, $R$ denotes an arbitrary local $k$-algebra.

\begin{lemma}\label{lem:linear_equiv}
  Let $D$ and $E$ be relative divisors such that $[D], [E] \in (\Div_{mH} X)(R)$. Then $D$ and $E$ are linearly equivalent if and only if
    \[
        D - E = \divisor p - \divisor q
    \]
    for some homogeneous polynomials $p, q \in (S_X)_t \otimes_k R$ of degree $t$ such that $p$ and $q$ each have at least one coefficient that is a unit in $R$.
\end{lemma}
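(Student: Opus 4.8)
The plan is to reduce both directions to a single dictionary between the unit-coefficient condition and the notion of a relative effective Cartier divisor, exploiting crucially that $R$ is local. Write $\kappa = R/\mathfrak{m}$ for the residue field and $\pi\colon X_R \to \Spec R$ for the structure morphism. Under the standing numerical hypotheses \eqref{cond:big_numbers} we have $t \ge \varphi(X)$, so $(S_X)_t = H^0(X,\mathcal{O}_X(t))$; since $k$ is a field, $k \to R$ is flat, whence $(S_X)_t \otimes_k R = H^0(X_R, \mathcal{O}_{X_R}(t))$ and, for every $\mathfrak{p} \in \Spec R$, $(S_X)_t \otimes_k \kappa(\mathfrak{p}) = H^0(X_{\kappa(\mathfrak{p})}, \mathcal{O}(t))$. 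I will first record the \emph{key observation: a section $p \in (S_X)_t \otimes_k R$ has a unit coefficient if and only if it defines a relative effective Cartier divisor $\divisor p$, which then satisfies $\divisor p \sim tH$.} Granting this, the reverse implication is immediate: if $D - E = \divisor p - \divisor q$ with such $p, q$, then $\mathcal{O}_{X_R}(\divisor p) \cong \mathcal{O}_{X_R}(tH) \cong \mathcal{O}_{X_R}(\divisor q)$, so $\mathcal{O}_{X_R}(D - E) \cong \mathcal{O}_{X_R}$ and $D \sim E$.

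To prove the key observation, fix a $k$-basis $\{b_i\}$ of $(S_X)_t$ and write $p = \sum_i c_i b_i$ with $c_i \in R$, so that the $c_i$ are the homogeneous coordinates of $p$. By flat base change, the restriction $p|_{X_{\kappa(\mathfrak{p})}} = \sum_i \bar{c}_i b_i$ vanishes if and only if every $c_i$ lies in $\mathfrak{p}$, so the locus of primes where $p$ restricts to the zero section is $V(c_0, c_1, \dots)$. Hence $p$ is fibrewise nonzero over all of $\Spec R$ if and only if the ideal $(c_0, c_1, \dots)$ is the unit ideal, and since $R$ is local this holds precisely when some $c_i$ is a unit, i.e.\ when $p$ has a unit coefficient. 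Finally, each fibre $X_{\kappa(\mathfrak{p})}$ is integral because $X$ is a geometrically integral variety, so a fibrewise nonzero section is a fibrewise nonzerodivisor; thus $p$ is fibrewise nonzero if and only if $\divisor p$ is a relative effective Cartier divisor, and such a divisor automatically satisfies $\mathcal{O}_{X_R}(\divisor p) \cong \mathcal{O}_{X_R}(tH)$.

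For the forward implication, assume $D \sim E$, and first construct an auxiliary relative divisor $G \sim tH - E$. The line bundle $\mathcal{O}_{X_R}(tH - E)$ is numerically equivalent to $(t-m)H$ on every fibre, so the cohomological vanishing underlying \Cref{thm:div_to_pic} (the argument of \cite[Lemma~3.5]{Kwe} applied to this class) kills all higher cohomology of its restriction to every geometric fibre. Cohomology and base change then shows that $\pi_* \mathcal{O}_{X_R}(tH - E)$ is a nonzero free $R$-module whose formation commutes with base change, so a basis vector $q_0$ restricts to a nonzero section on every fibre; as the fibres are integral, $G \coloneqq \divisor q_0$ is a relative effective Cartier divisor with $E + G \sim tH$. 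Multiplying $q_0$ by the canonical section of $\mathcal{O}_{X_R}(E)$ produces $q \in (S_X)_t \otimes_k R$ with $\divisor q = E + G$, which has a unit coefficient by the key observation. Because $D \sim E$, we have $D + G \sim E + G \sim tH$, so $\mathcal{O}_{X_R}(D + G) \cong \mathcal{O}_{X_R}(tH)$; transporting the canonical section of the effective divisor $D + G$ across this isomorphism yields $p \in (S_X)_t \otimes_k R$ with $\divisor p = D + G$, again with a unit coefficient by the key observation. Then $\divisor p - \divisor q = (D + G) - (E + G) = D - E$, completing the forward implication.

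The main obstacle is the existence of $G$, i.e.\ showing that $\pi_* \mathcal{O}_{X_R}(tH - E)$ is nonzero with formation commuting with passage to fibres. For Noetherian local $R$ this is precisely cohomology and base change combined with the fibrewise vanishing forced by \eqref{cond:big_numbers}; for an arbitrary local $k$-algebra $R$ one descends $X_R$, $E$, and the relevant sections to a finitely generated---hence Noetherian---$k$-subalgebra $R_0 \subseteq R$, invokes the Noetherian statement there, and base changes back up. Everything else reduces to the flat-base-change dictionary of the key observation and the additivity of $\divisor$, both routine once the dictionary is established.
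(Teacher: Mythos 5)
Your proof is correct in substance, and both directions ultimately rest on the same dictionary the paper uses: over the local ring $R$, a degree-$t$ form has a unit coefficient if and only if it is nonzero on every fibre, if and only if it cuts out a relative effective Cartier divisor (the paper outsources the last equivalence to \cite[Proposition~1.11]{Kol}, you reprove it via integrality of the fibres). The genuine difference is how the auxiliary divisor is produced in the forward direction. The paper's trick is elementary and cohomology-free: since $[D] \in (\Div_{mH} X)(R) \subset \Gr\big(P_X(t-m), (S_X)_t\big)(R)$, the module $(I_{D/X_R})_t$ is free and its Stiefel matrix has an invertible maximal minor, so a column gives $p \in (I_{D/X_R})_t$ with a unit coefficient; then $\divisor p = D + C$, and $E + C \sim D + C \sim tH$ yields $q$ by transporting the canonical section, exactly as in your final bookkeeping. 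You instead build the residual divisor $G \sim tH - E$ by pushing forward $\mathcal{O}_{X_R}(tH - E)$, invoking fibrewise vanishing, cohomology and base change, and Noetherian approximation for non-Noetherian local $R$. This route works, but it is strictly heavier than necessary: the Grassmannian embedding already hands you a degree-$t$ form through $D$ with a unit coefficient.

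Two steps in your argument need shoring up. First, you apply the vanishing of \cite[Lemma~3.5]{Kwe} to a line bundle fibrewise numerically equivalent to $(t-m)H$, but that lemma requires a lower bound on the twist, and condition \eqref{cond:big_numbers} constrains $t$ through $\varphi(2mH)$, not through $t-m$ directly. The claim is true but requires an argument you omit: any form of degree $u < 2m$ vanishing on $2mH$ would give an effective divisor $\equiv uH$ containing $2mH$, which is impossible by degree, so $I_{2mH}$ has a minimal generator in degree $2m$, whence $\varphi(2mH) \ge \operatorname{reg} \mathscr{I}_{2mH} \ge 2m$ and therefore $t - m \ge m \ge \max\{\varphi(\nu H), \varphi(X)\}$. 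Second, in the Noetherian approximation you must ensure that the descended divisor over $R_0$ remains fibrewise numerically equivalent to $mH$ at \emph{all} points of $\Spec R_0$, since the fibrewise vanishing is needed everywhere; this is fixable by shrinking $\Spec R_0$ to the clopen locus mapping into $\Div_{mH} X \subset \Div^Q X$, or more cleanly by applying cohomology and base change to the universal divisor over the Noetherian scheme $\Div_{mH} X$ and pulling back, which avoids approximation altogether.
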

\begin{proof}
    Suppose that $D$ and $E$ are linearly equivalent. Since $R$ is local, we choose an element $p \in (I_{D/X})_t$ corresponding to a column of a Stiefel matrix representing $[D]$. Since the Stiefel matrix possesses an invertible maximal minor, we ensure that $p$ has at least one coefficient that is a unit in $R$. By \cite[Proposition~1.11]{Kol}, such a $p$ defines a relative effective Cartier divisor. Since $p$ is homogeneous of degree $t$, we have $\divisor p \sim tH$. We write
    \[
        \divisor p = D + C
    \]
    for some relative effective divisor $C$. Since $D \sim E$, it follows that
    \[
        E + C \sim D + C = \divisor p \sim tH.
    \]
    Thus, there exists a polynomial $q \in (S_X)_t \otimes_k R$ such that $\divisor q = E + C$. Since $E+C$ is a relative effective Cartier divisor, \cite[Proposition~1.11]{Kol} implies that the reduction of $q$ to the special fiber is not a zero divisor. As $R$ is local, this guarantees that at least one coefficient of $q$ is a unit. We conclude that
    \[
        \divisor p - \divisor q = (D+C) - (E+C) = D - E. \qedhere
    \]
\end{proof}

The conditions on $p$ and $q$ in \Cref{lem:linear_equiv} correspond exactly to the conditions for projective coordinates of $R$-points of $\mathbb{P}((S_X)_t)$.

\begin{lemma}\label{lem:linear_equivalence_pq}
    There exists an algorithm to compute the defining homogeneous equations in Pl\"ucker coordinates for the closed subscheme $\mathbf{W}$ of 
    \[ 
        \mathbf{H} \coloneqq \mathbb{P}((S_X)_t) \times \mathbb{P}((S_X)_t) \times \Div_{mH} X \times \Div_{mH} X 
    \]
    consisting of tuples $(p, q, [D], [E]) \in \mathbf{H}(R)$ such that $D - E = \divisor(p/q)$.
\end{lemma}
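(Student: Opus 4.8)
The plan is to reduce the divisorial condition $D - E = \divisor(p/q)$ to an equality of two rank-$P_X(t-m)$ submodules of $(S_X)_{2t}$, and then to cut that equality out by explicit minors which I convert into Pl\"ucker coordinates. First I would rewrite the defining condition, following \Cref{lem:linear_equiv}, as the equality of relative effective Cartier divisors
\[
D + \divisor q = E + \divisor p
\]
on $X_R$, both numerically equivalent to $(m+t)H$. The requirement that $p$ and $q$ each have a unit coefficient forces them to restrict to nonzerodivisors on the special fibre, so \cite[Proposition~1.11]{Kol} guarantees that $\divisor p$ and $\divisor q$ are genuine relative effective Cartier divisors and both sides are well-defined.

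The heart of the argument is to translate this divisor equality into the equality of submodules
\[
q\cdot (I_{D/X_R})_t = p\cdot (I_{E/X_R})_t \quad\text{inside } (S_X)_{2t}\otimes_k R.
\]
Multiplication by the nonzerodivisor $q$ gives an isomorphism $(I_{D/X_R})_t \xrightarrow{\sim} (I_{D+\divisor q/X_R})_{2t}$, so $q\cdot (I_{D/X_R})_t$ is precisely the degree-$2t$ component of the saturated ideal of $D+\divisor q$, and likewise for $p$ and $E$. Since $t \ge \varphi(mH)$, the module $(I_{D/X_R})_t$ generates $(I_{D/X_R})_s$ for all $s \ge t$; hence $q\cdot (I_{D/X_R})_t$ generates $I_{D+\divisor q/X_R}$ in all degrees $\ge 2t$, and so the two displayed submodules coincide if and only if the two saturated ideals coincide, i.e. if and only if $D+\divisor q = E+\divisor p$. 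I would carry this comparison out in the relative setting, using flatness of the families and cohomology-and-base-change to see that the relevant graded pieces stay free of rank $P_X(t-m)$ over every local $R$. Crucially, this saturation argument only uses $t \ge \varphi(mH)$ and avoids any hypothesis of the form $2t \ge \varphi((m+t)H)$.

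With the reduction in hand the computation is explicit. Fix monomial bases of $(S_X)_t$ and $(S_X)_{2t}$ and precompute the multiplication structure constants of $S_X$. Writing $S^{(D)}, S^{(E)}$ for the Stiefel matrices of the two $\Div_{mH} X$ factors and $p,q$ for the projective coordinates of the two $\mathbb{P}((S_X)_t)$ factors, the products $q\,S^{(D)}$ and $p\,S^{(E)}$ become $\bigl(\dim (S_X)_{2t}\bigr)\times P_X(t-m)$ matrices whose entries are bilinear in these coordinates. Because $p,q$ restrict to nonzerodivisors on the fibres, both blocks have rank $P_X(t-m)$ at every point of $\mathbf H$, so \Cref{lem:submodule_criterion} shows that the equality $\im(q\,S^{(D)}) = \im(p\,S^{(E)})$ — i.e. both inclusions simultaneously — is cut out by the vanishing of all $(P_X(t-m)+1)$-minors of the block matrix $\bigl(p\,S^{(E)} \mid q\,S^{(D)}\bigr)$. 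These minors generate a $\GL_1\times\GL_1\times\GL_{P_X(t-m)}\times\GL_{P_X(t-m)}$-stable ideal in the Stiefel coordinates of $\mathbf H$, which I convert to Pl\"ucker coordinates by \Cref{thm:ideal_conversion} and \Cref{rem:coordinate_change}. Adjoining these polynomials to the equations already available for $\mathbf H$ (the Pl\"ucker relations of the four Grassmannian factors together with the equations of $\Div_{mH} X$ from \Cref{thm:compute_div}) defines $\mathbf W$, and \Cref{lem:subscheme_local_determination} certifies that this subscheme has exactly the prescribed $R$-points for every local $k$-algebra $R$.

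I expect the main obstacle to be the second paragraph: proving the scheme-theoretic, rather than merely pointwise, equivalence between $D+\divisor q = E+\divisor p$ and $q\cdot(I_{D/X_R})_t = p\cdot(I_{E/X_R})_t$ uniformly over all local $R$. The delicate points are that the relative generation statement — $q\cdot (I_{D/X_R})_t$ generating $I_{D+\divisor q/X_R}$ in high degrees — must persist after arbitrary local base change, and that the unit-coefficient hypothesis genuinely forces $q\,S^{(D)}$ and $p\,S^{(E)}$ to be locally free of the expected rank so that \Cref{lem:submodule_criterion} applies verbatim. Everything else is a routine, if large, computation.
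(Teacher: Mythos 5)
Your proposal is correct and follows essentially the same route as the paper: reduce $D - E = \divisor(p/q)$ to the submodule equality $q \cdot (I_{D/X})_t = p \cdot (I_{E/X})_t$, cut this out by the $(P_X(t-m)+1)$-minors of the bilinear block matrix via \Cref{lem:submodule_criterion}, and convert the resulting $\GL_1 \times \GL_1 \times \GL_d \times \GL_d$-stable ideal to Pl\"ucker coordinates with \Cref{thm:ideal_conversion}. Your second paragraph (the saturation argument identifying $q\cdot(I_{D/X_R})_t$ with the degree-$2t$ piece of the saturated ideal of $D+\divisor q$, and the rank argument making \Cref{lem:submodule_criterion} applicable) supplies details that the paper's proof asserts without elaboration.
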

\begin{proof}
    For brevity, let $d = P_X(t-m)$. Considering $\mathbf{H}$ as a subscheme of the ambient space
    \[
        \mathbf{G} \coloneqq \mathbb{P}((S_X)_t) \times \mathbb{P}((S_X)_t) \times \Gr(d, (S_X)_t) \times \Gr(d, (S_X)_t),
    \]
    its defining equations are computable by \Cref{thm:compute_div}.

    Let $F = (f_i)_{i \in [d]}$ and $G = (g_j)_{j \in [d]}$ be the Stiefel matrices representing $[D]$ and $[E]$, respectively. The condition $D - E = \divisor(p/q)$ is equivalent to 
    \[ q \cdot (I_{D/X})_t = p \cdot (I_{E/X})_t.\] 
    This holds if and only if the $\dim (S_X)_{2t} \times d$ matrices $\Psi \coloneqq (qf_i)_{i \in [d]}$ and $\Omega \coloneqq (pg_j)_{j \in [d]}$ share the same column space. Therefore, by applying \Cref{lem:submodule_criterion} to the block matrix
    \[ (\Psi \mid \Omega), \]
    we obtain the defining equations of $\mathbf{W}$ inside $\mathbf{H}$.

    Since the column spaces of $F$ and $G$ are invariant under the $\GL_d$-action and the projective coordinates are invariant under scaling, the column spaces of $\Psi$ and $\Omega$ are invariant under the action of $\GL_1 \times \GL_1 \times \GL_d \times \GL_d$. Consequently, the defining ideal is invariant under this group action and can be expressed in Pl\"ucker coordinates by \Cref{thm:ideal_conversion}.
\end{proof}

\begin{lemma}\label{lem:computing_L}
    There exists an algorithm to compute the defining homogeneous equations in Pl\"ucker coordinates for the fiber product
    \[
        \mathbf{L} \coloneqq \Div_{mH} X \times_{\Pic_{mH} X} \Div_{mH} X
    \]
    as a closed subscheme of $\Div_{mH} X \times \Div_{mH} X$.
\end{lemma}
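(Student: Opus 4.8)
The plan is to exhibit $\mathbf{L}$ as the scheme-theoretic image of $\mathbf{W}$ under the projection that forgets the two auxiliary projective factors, and then to compute that image by projective elimination applied to the equations furnished by \Cref{lem:linear_equivalence_pq}. Concretely, let $\pi \colon \mathbf{H} \to \Div_{mH} X \times \Div_{mH} X$ be the projection forgetting the two factors $\mathbb{P}((S_X)_t)$, and write $\mathbf{L}'$ for the scheme-theoretic image of the restriction $\pi|_{\mathbf{W}}$. Since $\mathbb{P}((S_X)_t) \times \mathbb{P}((S_X)_t)$ is proper over $k$, the map $\pi$ is proper, so $\pi|_{\mathbf{W}}$ is proper and its image is closed. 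The core of the argument is then to prove $\mathbf{L}' = \mathbf{L}$.

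For the inclusion $\mathbf{L}' \subseteq \mathbf{L}$ I would show that $\pi|_{\mathbf{W}}$ factors through the closed subscheme $\mathbf{L}$. Over $\mathbf{W}$ the universal relation $D - E = \divisor(p/q)$ of \Cref{lem:linear_equivalence_pq} holds by construction, so the universal divisors $D$ and $E$ differ by a principal divisor and the line bundles $\mathcal{O}(D)$ and $\mathcal{O}(E)$ are isomorphic on $X_{\mathbf{W}}$. Hence the two classifying morphisms $\mathbf{W} \to \Pic_{mH} X$ induced by the two $\Div_{mH} X$ factors coincide, and by the universal property of $\mathbf{L} = \Div_{mH} X \times_{\Pic_{mH} X} \Div_{mH} X$ the map $\pi|_{\mathbf{W}}$ factors through $\mathbf{L}$. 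As $\mathbf{L}'$ is the smallest closed subscheme through which $\pi|_{\mathbf{W}}$ factors, this gives $\mathbf{L}' \subseteq \mathbf{L}$.

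For the reverse inclusion I would pass to local points and invoke \Cref{lem:subscheme_local_determination}. For a local $k$-algebra $R$, every $w \in \mathbf{W}(R)$ maps into $\mathbf{L}'(R)$, since $\pi|_{\mathbf{W}}$ factors through $\mathbf{L}'$; thus $\pi(\mathbf{W}(R)) \subseteq \mathbf{L}'(R)$. Conversely $\mathbf{L}(R)$ consists exactly of pairs $([D],[E])$ with $D \sim E$, and \Cref{lem:linear_equiv} produces, for each such pair, homogeneous $p, q$ of degree $t$ with a unit coefficient, hence $R$-points of $\mathbb{P}((S_X)_t)$, realizing $([D],[E]) = \pi(w)$ for some $w \in \mathbf{W}(R)$. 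Therefore $\mathbf{L}(R) = \pi(\mathbf{W}(R)) \subseteq \mathbf{L}'(R) \subseteq \mathbf{L}(R)$, so the three agree for every local $R$, and \Cref{lem:subscheme_local_determination} yields $\mathbf{L} = \mathbf{L}'$.

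The algorithm then follows: starting from the multihomogeneous equations of $\mathbf{W}$ inside $\mathbf{H}$ given by \Cref{lem:linear_equivalence_pq}, I compute $\mathbf{L} = \mathbf{L}'$ by projective elimination of the coordinates of the two $\mathbb{P}((S_X)_t)$ factors, namely saturating the defining ideal with respect to the irrelevant ideals of these two factors and eliminating their variables by a Gröbner basis computation, leaving an ideal in the Pl\"ucker coordinates of $\Div_{mH} X \times \Div_{mH} X$; these steps are effective over $k$ under the standing algorithmic assumptions. I expect the principal obstacle to be the matching of scheme structures rather than the elimination itself: because $\Pic^\tau X$, and hence $\mathbf{L}$, may be non-reduced in characteristic $p$, set-theoretic surjectivity of $\pi|_{\mathbf{W}}$ is insufficient, and one must pin down the scheme-theoretic image precisely. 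This is exactly what the factorization through the fiber product, combined with the local-point criterion of \Cref{lem:subscheme_local_determination}, achieves.
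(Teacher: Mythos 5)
Your proposal is correct and follows essentially the same route as the paper: the paper likewise identifies $\mathbf{L}$ with the image of $\mathbf{W}$ under the projection $\mathbf{H} \to \Div_{mH} X \times \Div_{mH} X$ (citing \Cref{lem:linear_equiv}) and computes its equations by Gr\"obner-basis elimination. The extra work you supply --- properness of the projection, the factorization of $\pi|_{\mathbf{W}}$ through the fiber product, and the local-point comparison via \Cref{lem:subscheme_local_determination} to pin down the scheme-theoretic image exactly --- is precisely the justification the paper compresses into the single assertion that $\mathbf{L}$ ``coincides with the image'' of $\mathbf{W}$.
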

\begin{proof}
  We adopt the notation from \Cref{lem:linear_equivalence_pq}. By \Cref{lem:linear_equiv}, the fiber product $\mathbf{L}$ coincides with the image of the subscheme $\mathbf{W}$ under the projection $\mathbf{H} \to \Div_{mH} X \times \Div_{mH} X$ onto the last two factors. Consequently, the defining equations of $\mathbf{L}$ can be computed by standard elimination theory with a Gröbner basis.
\end{proof}

We now proceed to construct the quotient of $\Div_{mH} X$ by the relation $\mathbf{L}$. This quotient is obtained by computing the image of the morphism $\phi \colon \Div_{mH} X \to \Hilb(\Div_{mH} X)$. To clarify the problem, we temporarily consider a more general setting for the following lemma.

Let $B \hookrightarrow \mathbb{P}^\mu$ and $Y \subset \mathbb{P}^\nu \times B$ be explicitly defined closed subschemes. Assume that the projection $\pi_1 \colon Y \to B$ onto the last factor is flat and that $P$ is the Hilbert polynomial of all fibers over closed points. This setup defines a morphism given by
\begin{align*}
\psi \colon B(R) &\to (\Hilb_P \mathbb{P}^\nu)(R) \\
b &\mapsto [\pi_0(\pi_1^{-1}(b))],
\end{align*}
where $\pi_0\colon Y \to \mathbb{P}^\nu$ is the projection onto the 0-th component. Let
\[ Q(s) = \binom{s+\nu}{\nu} - P(s). \]
Take an integer $u \ge \varphi(Q)$. We consider the embedding $\Hilb_P \mathbb{P}^\nu \hookrightarrow \Gr(Q(u), (S_{\mathbb{P}^\nu})_u)$.

\begin{lemma}\label{lem:compute_hilbert_morphism_psi}
    There exists an algorithm to compute the graph $\Gamma_\psi$ of the morphism
    \[\psi \colon B \to \Hilb_P \mathbb{P}^\nu\]
    as a closed subscheme of the ambient space $\mathbb{P}^\mu \times \Gr(Q(u), (S_{\mathbb{P}^\nu})_u)$.
\end{lemma}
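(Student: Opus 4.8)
The plan is to give a functorial description of the $R$-points of $\Gamma_\psi$, reduce the defining equality to a \emph{containment} of submodules, turn that containment into explicit equations in Stiefel coordinates by restricting universal forms to $Y$, and finally convert to Pl\"ucker coordinates via \Cref{thm:ideal_conversion}. Write $W = (S_{\mathbb{P}^\nu})_u$ and let $S = (f_0, \dots, f_{Q(u)-1})$ be the universal Stiefel matrix of $\Gr(Q(u), W)$, where each column $f_j$ is read simultaneously as a degree-$u$ form in the coordinates of $\mathbb{P}^\nu$ whose coefficients are the Stiefel coordinates. For a local $k$-algebra $R$, a point $(b,M)$ of $\mathbb{P}^\mu \times \Gr(Q(u), W)$ with $b \in B(R)$ lies in $\Gamma_\psi(R)$ precisely when $M = (I_{Y_b})_u$, where $Y_b = \pi_1^{-1}(b) \subset \mathbb{P}^\nu_R$ is the fiber. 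Because $\pi_1$ is flat with fibers whose ideal sheaves have Hilbert polynomial $Q$, and $u \ge \varphi(Q)$, each $(I_{Y_b})_u$ is a rank-$Q(u)$ direct summand of $W_R$, i.e. a genuine point of $\Gr(Q(u), W)(R)$; this is the content of the Gotzmann embedding \Cref{thm:gotzmann} applied fiberwise. I would then replace the equality by a containment: if $M \subseteq (I_{Y_b})_u$, the induced surjection $W_R/M \twoheadrightarrow W_R/(I_{Y_b})_u$ is a map between finitely generated projective $R$-modules of equal rank, hence an isomorphism, forcing $M = (I_{Y_b})_u$. Thus $\Gamma_\psi(R) = \{(b,M) : M \subseteq (I_{Y_b})_u\}$.

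The central step is to make this containment explicit. Since $u \ge \varphi(Q)$, the fiber $Y_b$ is $u$-regular, so the restriction $W_R \to H^0(Y_b, \mathcal{O}(u))$ is surjective with kernel exactly $(I_{Y_b})_u$; hence $M \subseteq (I_{Y_b})_u$ if and only if every column $f_j$ restricts to zero on $Y_b$. Restricting the universal forms to $Y$ produces $Q(u)$ sections of the vector bundle $\mathcal{R} = (\pi_1)_* \mathcal{O}_Y(u)$, pulled back to $B \times \Gr(Q(u), W)$, with entries linear in the Stiefel coordinates, and $\Gamma_\psi$ is exactly the common zero scheme of these sections. Flatness of $\pi_1$ together with $u \ge \varphi(Q)$ guarantees, via cohomology and base change, that $\mathcal{R}$ is locally free of rank $P(u)$ with formation commuting with base change, so this zero scheme has the correct fibers over every local $R$. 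Its defining ideal, homogeneous in the coordinates $y$ of $\mathbb{P}^\mu$ and in the Stiefel coordinates, is computed by reducing each $f_j$ modulo the bihomogeneous ideal of $Y$ via a Gr\"obner basis and collecting the resulting coefficients $\theta_{j,k}$; these, together with the given equations of $B \hookrightarrow \mathbb{P}^\mu$, cut out $\Gamma_\psi$ in Stiefel coordinates.

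Finally, the ideal so obtained is $\GL_{Q(u)}$-stable, since the condition $M \subseteq (I_{Y_b})_u$ depends only on the column span of $S$: replacing $S$ by $S\cdot C$ recombines the columns, so the new generators are $C$-linear combinations of the $\theta_{j,k}$ and generate the same ideal. Hence \Cref{thm:ideal_conversion}, in the form of \Cref{rem:coordinate_change}, applies and rewrites the equations in Pl\"ucker coordinates on $\Gr(Q(u), W)$, yielding $\Gamma_\psi$ inside $\mathbb{P}^\mu \times \Gr(Q(u), W)$. Every ingredient is algorithmic: the bihomogeneous ideal of $Y$ and the equations of $B$ are given, the Gr\"obner reduction and extraction of the $\theta_{j,k}$ are standard, and the Stiefel-to-Pl\"ucker conversion is explicit.

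I expect the main obstacle to be justifying that this ``restrict modulo $I_Y$ and collect coefficients'' recipe computes $\Gamma_\psi$ \emph{scheme-theoretically} over an arbitrary local $R$, rather than merely set-theoretically at closed points. This rests on $\mathcal{R} = (\pi_1)_* \mathcal{O}_Y(u)$ being a vector bundle of the expected rank whose formation commutes with base change, so that the vanishing of the $\theta_{j,k}$ is fiberwise equivalent to $f_j|_{Y_b} = 0$; this is precisely where the flatness of $\pi_1$ and the Gotzmann bound $u \ge \varphi(Q)$ are indispensable. By contrast, the passage from equality to containment and the coordinate conversion are routine.
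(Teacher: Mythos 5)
Your functorial reduction is sound: characterizing $\Gamma_\psi$ by the containment $M \subseteq (I_{Y_b})_u$ and recovering equality from a surjection of finitely generated projective modules of equal rank is correct, and it is the mirror image of the paper's characterization (the paper instead imposes that the specialized equations of $Y$ lie \emph{inside} $M$). The gap is in the step you yourself flagged as the main obstacle, and as written it is fatal: the Gr\"obner recipe does not compute the condition. Reduction modulo the bihomogeneous ideal $I_Y$ preserves bidegree, and each universal form $f_j$ has bidegree $(u,0)$, i.e.\ degree zero in the coordinates of $\mathbb{P}^\mu$; hence only Gr\"obner basis elements of bidegree $(\ast,0)$ — elements of $I_Y$ not involving the coordinates of $\mathbb{P}^\mu$ — can ever be used in its reduction, and the resulting coefficients $\theta_{j,k}$ are polynomials in the Stiefel coordinates alone. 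Their vanishing expresses that $f_j$ vanishes on the image of $Y$ in $\mathbb{P}^\nu$ (the union of all fibers), a condition independent of $b$, which cannot cut out $\Gamma_\psi$; your own assertion that the $\theta_{j,k}$ are homogeneous in the coordinates of $\mathbb{P}^\mu$ is contradicted by this. To repair your route you would have to reduce the twisted elements $y^\beta f_j$ for all monomials $y^\beta$ of some degree $d$ in the base variables, and then prove that $(S_Y)_{(u,d)} \otimes \kappa(b) \to H^0(Y_b, \mathcal{O}_{Y_b}(u))$ is an isomorphism for that specific $d$; this demands an explicit regularity bound in the $B$-direction that you never identify, so even the corrected algorithm remains unspecified.

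The paper sidesteps all of this by using the opposite containment. Since $u \ge \varphi(Q)$, one may arrange the defining equations of $Y$ to have degree exactly $u$ in the variables of $\mathbb{P}^\nu$; specializing them at $b$ yields a matrix $F(b)$ whose entries are polynomials in the coordinates of $b$, and $\psi(b) = [Z]$ if and only if $\im F(b) \subseteq \im G$, where $G$ is a Stiefel matrix representing $[Z]$. This containment is imposed by the Fitting-ideal criterion of \Cref{lem:submodule_criterion} applied to the block matrix $(G \mid F(b))$, producing equations that are manifestly bihomogeneous and $\GL_{Q(u)}$-stable, hence convertible to Pl\"ucker coordinates by \Cref{thm:ideal_conversion}. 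No relative pushforward, no cohomology and base change, and no auxiliary degree bound in the $B$-direction is needed: the direction of the containment is precisely what makes the computation elementary, since the data contained in the smaller module (the equations of $Y$) is explicitly given, whereas in your formulation the larger module (the degree-$u$ piece of the fiber ideal) is the object one is trying to compute.
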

\begin{proof}
  Since $u \ge \varphi(Q)$, we may assume that $Y$ is defined by bihomogeneous polynomials $f_0,\dots,f_{n-1}$, each of which has degree $u$ with respect to the variables of $\mathbb{P}^\nu$. For a point $b \in B$, let $f_i(\cdot, b) \in (S_{\mathbb{P}^\nu})_u$ denote the polynomial obtained by specializing the variables of $B$ to the coordinates of $b$.

  Consider points $b \in B(R)$ and $[Z] \in (\Hilb_P \mathbb{P}^\nu)(R)$. Then $\psi(b) = [Z]$ if and only if
  \[ f_i(\cdot, b) \in (I_Z)_u \]
  for all $i \in [n]$. Let $F(b)$ be the $\dim (S_{\mathbb{P}^\nu})_u \times n$ matrix whose columns correspond to $f_i(\cdot, b) \in (S_{\mathbb{P}^\nu})_u$. Let $G$ be a Stiefel matrix representing $[Z]$. Then $\psi(b) = [Z]$ if and only if $\im F(b) \subset \im G$. Therefore, by applying \Cref{lem:submodule_criterion} with $d = Q(u)$ to the block matrix
  \[ (G \mid F(b)), \]
  we obtain the defining equations of $\Gamma_\psi$.

  Clearly, the column space of $G$ is invariant under the $\GL_{Q(u)}$-action. Thus, the ideal given by \Cref{lem:submodule_criterion} is invariant under the $\GL_{Q(u)}$-action on $G$ and is homogeneous in the coordinates of $b$. Consequently, the defining equations of $\Gamma_\psi$ can be expressed in terms of the coordinates of $B$ and the Pl\"ucker coordinates of $[Z]$ by \Cref{thm:ideal_conversion}.
\end{proof}

Identifying $\Pic_{mH} X$ with the image of the morphism $\phi \colon \Div_{mH} X \to \Hilb(\Div_{mH} X)$ yields the following corollary.

\begin{corollary}\label{cor:Gamma_phi}
    There exists an explicit algorithm to compute the defining equations of the graph of the morphism
    \[
        \Div_{mH} X \to \Pic_{mH} X,
    \]
    which sends $[D] \in (\Div_{mH} X)(R)$ to the line bundle $\mathcal O_{X_R}(D)$.
\end{corollary}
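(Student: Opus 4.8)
The plan is to obtain this corollary as a direct specialization of the general Hilbert-image construction of \Cref{lem:compute_hilbert_morphism_psi}. I would set $B = \Div_{mH} X$, embedded in $\mathbb{P}^\mu := \mathbb{P}(\bigwedge^{P_X(t-m)}(S_X)_t)$ through \Cref{thm:compute_div}, and take $Y = \mathbf{L}$, the linear equivalence relation, embedded in $\mathbb{P}^\nu \times B$ where $\mathbb{P}^\nu$ is the same projective space. Both hypotheses of that lemma are already in hand: the projection $\pi_1\colon \mathbf{L}\to\Div_{mH}X$ onto the second factor is flat, being a base change of the projective bundle $\Div_{mH}X\to\Pic_{mH}X$ from \Cref{thm:div_to_pic}; and by \Cref{lem:hilb_Phi} the fibers of $\pi_1$ over closed points all have the single constant Hilbert polynomial $\Phi = \Phi_{m,t}$.

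With this data matched, the morphism $\psi$ furnished by \Cref{lem:compute_hilbert_morphism_psi} is exactly $\phi\colon \Div_{mH}X\to\Hilb_\Phi(\Div_{mH}X)$, since $\pi_0(\pi_1^{-1}([D])) = \mathbf{L}_{[D]}$ by definition. The input scheme $\mathbf{L}$ is itself computable in Pl\"ucker coordinates by \Cref{lem:computing_L}, and the auxiliary embedding parameter $u \ge \varphi(Q)$ with $Q(s) = \binom{s+\nu}{\nu} - \Phi(s)$ can be fixed because the Gotzmann number $\varphi(Q)$ is explicitly computable. Hence the lemma returns the defining equations of the graph $\Gamma_\phi$ inside $\mathbb{P}^\mu\times\Gr(Q(u),(S_{\mathbb{P}^\nu})_u)$.

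It then remains to identify $\Gamma_\phi$ with the graph demanded in the statement. By \Cref{lem:phi_image} the image of $\phi$ is isomorphic to $\Pic_{mH}X$, so $\phi$ factors as the class map $\Div_{mH}X\to\Pic_{mH}X$ followed by the closed embedding $\Pic_{mH}X\hookrightarrow\Hilb_\Phi(\Div_{mH}X)$ onto that image; under this embedding $\Gamma_\phi$ is precisely the graph of $[D]\mapsto\mathcal{O}_{X_R}(D)$. Because the corollary is a specialization, I do not anticipate a genuine obstacle in the argument; the only point worth flagging is conceptual rather than technical, namely that $\Pic_{mH}X$ is produced here concretely as the image $\phi(\Div_{mH}X)$ realized inside a Grassmannian-embedded Hilbert scheme, and it is in precisely this form that the defining equations of $\Pic^\tau X$ will subsequently be extracted by elimination.
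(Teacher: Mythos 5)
Your proposal is correct and follows essentially the same route as the paper: apply \Cref{lem:compute_hilbert_morphism_psi} to the relation $\mathbf{L}$ computed in \Cref{lem:computing_L}, with the fiber Hilbert polynomial $\Phi_{m,t}$ supplied by \Cref{lem:hilb_Phi}, and identify the image of $\phi$ with $\Pic_{mH}X$ via \Cref{lem:phi_image}. The extra details you spell out (flatness of $\pi_1$ as a base change of the projective bundle, and the factorization of $\phi$ through its image) are exactly the ingredients the paper sets up before stating the corollary.
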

\begin{proof}
    This follows by applying \Cref{lem:compute_hilbert_morphism_psi} to the fiber product $\mathbf{L}$ computed in \Cref{lem:computing_L}. The required Hilbert polynomial is determined by \Cref{lem:hilb_Phi}.
\end{proof}

\PicTau
\begin{proof}
    The defining equations of $\Pic_{mH} X$ can be computed from \Cref{cor:Gamma_phi} using standard elimination theory with Gr\"obner bases. The result follows from the isomorphism \[\Pic^\tau X \cong \Pic_{mH} X.\qedhere\]
\end{proof}

\begin{remark}\label{rem:Pic_dependence}
  The closed subscheme structure of $\Pic^\tau X$ computed in \Cref{thm:Pic_tau} depends on the parameters $m$ and $t$ satisfying condition \eqref{cond:big_numbers}. Different parameters yield isomorphic schemes defined by distinct homogeneous equations in distinct ambient spaces. To compute the group law in the next section, we utilize the models associated with $(m,t)$ and $(2m, 2t)$, simply denoted by $\Pic_{mH} X$ and $\Pic_{2mH} X$ respectively. We identify $\Pic^\tau X$ with $\Pic_{mH} X$.
\end{remark}

\subsection{Computing the Group Structure}

The aim of this subsection is to give an explicit algorithm for computing the commutative group scheme structure on $\Pic^\tau X$. More precisely, we compute the morphisms
\begin{align*}
    \alpha   &\colon \Pic^\tau X \times \Pic^\tau X \to \Pic^\tau X, \\
    \iota    &\colon \Pic^\tau X \to \Pic^\tau X, \\
    \epsilon &\colon \Spec k \to \Pic^\tau X,
\end{align*}
which define the addition, the additive inverse, and the identity section. We begin by computing the addition morphism on the moduli space $\Div_{mH} X$. Recall that we identify $\Div_{mH} X$ and $\Div_{2mH} X$ with closed subschemes of Grassmannians via the embeddings
\begin{align*}
    \Div_{mH} X &\hookrightarrow \Gr\big(P_X(t-m), (S_X)_t\big), \\
    \Div_{2mH} X &\hookrightarrow \Gr\big(P_X(2t-2m), (S_X)_{2t}\big).
\end{align*}

\begin{lemma}\label{lem:addition_div}
    There exists an algorithm to compute the addition morphism
    \[
        \sigma \colon \Div_{mH} X \times \Div_{mH} X \to \Div_{2mH} X,
    \]
    which on $R$-points sends a pair $([D],[E])$ to $[D+E]$.
\end{lemma}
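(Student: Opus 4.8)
The plan is to realize $\sigma$ through the multiplicative structure of the homogeneous ideals. Over a local $k$-algebra $R$, represent $[D],[E]\in(\Div_{mH}X)(R)$ by Stiefel matrices $F=(f_0,\dots,f_{d'-1})$ and $G=(g_0,\dots,g_{d'-1})$, where $d'=P_X(t-m)$ and the columns form bases of the free $R$-modules $(I_{D/X})_t$ and $(I_{E/X})_t$ inside $(S_X)_t\otimes_k R$. Viewing the $f_i,g_j$ as degree-$t$ forms, I would assemble the $\dim_k(S_X)_{2t}\times (d')^2$ matrix
\[
    M=(f_i g_j)_{(i,j)\in[d']\times[d']},
\]
whose columns are degree-$2t$ forms. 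Since $\mathscr I_{(D+E)/X}=\mathscr I_{D/X}\cdot\mathscr I_{E/X}$ as invertible ideal sheaves, the column space $\im M$ is contained in $(I_{(D+E)/X})_{2t}$. The crux of the argument, discussed below, is that this inclusion is an equality, so that $\im M$ is exactly the point $[D+E]=\sigma([D],[E])$ under the embedding $\Div_{2mH}X\hookrightarrow \Gr(d,(S_X)_{2t})$ with $d=P_X(2t-2m)$.

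Granting this, I would cut out the graph $\Gamma_\sigma$ inside $\Div_{mH}X\times\Div_{mH}X\times\Div_{2mH}X$ as follows. Let $W$ be the Stiefel matrix of size $\dim_k(S_X)_{2t}\times d$ representing $[C]\in(\Div_{2mH}X)(R)$. Applying \Cref{lem:submodule_criterion} to the block matrix $(W\mid M)$ imposes the condition $\im M\subseteq \im W$ through the vanishing of all $(d+1)\times(d+1)$ minors. Because $\im M=(I_{(D+E)/X})_{2t}$ and $\im W$ are both rank-$d$ direct summands of $(S_X)_{2t}\otimes_k R$, any inclusion between two direct summands of equal rank is an equality; hence the locus $\im M\subseteq\im W$ coincides with $\{[C]=[D+E]\}$ on $R$-points for every local $R$. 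By \Cref{lem:subscheme_local_determination} this determines the closed subscheme $\Gamma_\sigma$ scheme-theoretically.

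To make this effective, I would observe that the entries of $M$ are bilinear in the Stiefel coordinates of $[D]$ and $[E]$, so the minors of $(W\mid M)$ are polynomials in the Stiefel coordinates of the three factors. The defining ideal depends only on the column spaces $\im F,\im G,\im W$, hence is stable under $\GL_{d'}\times\GL_{d'}\times\GL_{d}$; \Cref{thm:ideal_conversion} then converts it into multihomogeneous equations in the Pl\"ucker coordinates of the product of Grassmannians. Intersecting with the equations of $\Div_{mH}X$ and $\Div_{2mH}X$ supplied by \Cref{thm:compute_div} yields the desired equations of $\Gamma_\sigma$, and hence of $\sigma$.

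The \emph{main obstacle} is establishing the equality $\im M=(I_{(D+E)/X})_{2t}$, equivalently the surjectivity of the multiplication map
\[
    H^0\!\big(X,\mathcal O_X(tH-D)\big)\otimes H^0\!\big(X,\mathcal O_X(tH-E)\big)\longrightarrow H^0\!\big(X,\mathcal O_X(2tH-D-E)\big).
\]
I would first prove this at a geometric point using Castelnuovo--Mumford regularity: for $t$ satisfying \eqref{cond:big_numbers} the line bundles $\mathcal O_X(tH-D)$ and $\mathcal O_X(tH-E)$, both numerically equivalent to $(t-m)H$, are sufficiently positive that the kernel-bundle cohomology obstructing surjectivity vanishes. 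To descend to an arbitrary local $R$, I would use that the relevant higher cohomology vanishes (as in \Cref{thm:div_to_pic}), so that the formation of each $H^0$ commutes with base change and the three $R$-modules are free; the cokernel of the multiplication map is then a finitely generated $R$-module that vanishes modulo the maximal ideal, hence vanishes by Nakayama. Verifying that \eqref{cond:big_numbers} indeed forces the required regularity uniformly across the family is the delicate point of the argument.
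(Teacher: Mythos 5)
Your construction is exactly the paper's: the same matrix of pairwise products $M=(f_ig_j)$, the same application of \Cref{lem:submodule_criterion} to the block matrix $(W\mid M)$, the same $\GL_{d'}\times\GL_{d'}\times\GL_d$-invariance observation, and the same conversion to Pl\"ucker coordinates via \Cref{thm:ideal_conversion}. The divergence is in the justification, and there your argument has a genuine gap. You make the scheme-theoretic correctness of these equations rest entirely on the equality $\im M=(I_{(D+E)/X})_{2t}$, i.e.\ on surjectivity of the multiplication map
\[
H^0\!\big(X,\mathcal O_X(tH-D)\big)\otimes H^0\!\big(X,\mathcal O_X(tH-E)\big)\longrightarrow H^0\!\big(X,\mathcal O_X(2tH-D-E)\big),
\]
and you concede that deducing this from \eqref{cond:big_numbers} is unresolved. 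This is not a minor verification: condition \eqref{cond:big_numbers} is a Gotzmann/regularity hypothesis on the ideal sheaves $\mathscr I_D$, $\mathscr I_E$, $\mathscr I_{D+E}$ individually, and it does not obviously control the kernel-bundle cohomology $H^1(M_{\mathcal L_2}\otimes\mathcal L_1)$ you invoke. The most natural alternative, regularity of $\mathcal L_1$ with respect to $\mathcal L_2$, fails outright: for $i\ge 2$ the bundle $\mathcal L_1\otimes\mathcal L_2^{-i}$ is numerically negative, and by Serre duality $H^i$ of it is dual to sections of a very positive adjoint-type bundle, which do not vanish. Worse, any uniform bound valid for \emph{all} pairs of divisors numerically equivalent to $mH$ would naturally be proved by working over the parameter space $\Pic^\tau X\times\Pic^\tau X$ --- the very object being constructed --- or by establishing new explicit bounds not present in the paper. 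So, as written, your proof is conditional on an unproven (and possibly delicate) lemma.

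The missing idea is that equality is never needed; containment alone characterizes the graph, and that is what the paper uses. Claim: for $[C]\in\Div_{2mH}X(R)$ with Stiefel matrix $W$, one has $\im M\subseteq\im W$ if and only if $C=D+E$. The forward direction is your easy inclusion. For the converse, since $t\ge\varphi(mH)$, the sheaves $\mathscr I_{D/X_R}(t)$ and $\mathscr I_{E/X_R}(t)$ are relatively globally generated by the columns of $F$ and $G$ (fiberwise $t$-regularity plus cohomology and base change); hence the ideal sheaf generated by $\im M$ is exactly $\mathscr I_{D/X_R}\cdot\mathscr I_{E/X_R}=\mathscr I_{(D+E)/X_R}$, suitably twisted, even when the span of the products is a proper subspace of $(I_{(D+E)/X_R})_{2t}$. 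Thus $\im M\subseteq\im W$ forces $\mathscr I_{(D+E)/X_R}\subseteq\mathscr I_{C/X_R}$, i.e.\ $C\subseteq D+E$ as closed subschemes; both are $R$-flat with the same fiberwise Hilbert polynomial, so fiberwise the inclusion is an equality (a closed immersion with zero Hilbert polynomial quotient), and Nakayama upgrades this to $C=D+E$ over $R$. In short, global generation --- which is precisely what the Gotzmann-number hypothesis delivers --- replaces the single-degree surjectivity statement, your equal-rank direct-summand step becomes unnecessary, and with this substitution your argument coincides with the paper's proof.
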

\begin{proof}
    For brevity, let $d_M \coloneqq P_X(t-m)$ and $d_L \coloneqq P_X(2t-2m)$. Consider modules $N, M \in \Div_{mH} X(R)$ and $L \in \Div_{2mH} X(R)$. Let $F = (f_i)_{i \in [d_M]}$, $G = (g_j)_{j \in [d_M]}$, and $H = (h_l)_{l \in [d_L]}$ be the Stiefel matrices representing $N$, $M$, and $L$, respectively. We view the columns $f_i$, $g_j$, and $h_l$ as homogeneous polynomials in $S_X$ as well.
    
    Let $\Omega$ be the $\dim(S_X)_{2t} \times d_M^2$ matrix given by 
    \[
        \Omega \coloneqq \big( f_i g_j \big)_{(i, j) \in [d_M]^2}.
    \]
    The condition $\sigma(N, M) = L$, or equivalently $(N, M, L) \in \Gamma_\sigma(R)$, holds if and only if the submodule generated by the products $N \cdot M$ is contained in $L$. In terms of the matrices, this is equivalent to the condition $\im \Omega \subseteq \im H$. Therefore, by applying \Cref{lem:submodule_criterion} with $d = d_L$ to the block matrix
    \[ (H \mid \Omega), \]
    we obtain the defining equations of $\Gamma_\sigma$.
    
    The column spaces of $H$ and $\Omega$ are invariant under the action of $\GL_{d_M} \times \GL_{d_M} \times \GL_{d_L}$. Consequently, the defining ideal obtained from \Cref{lem:submodule_criterion} is invariant under this action and can be expressed in Pl\"ucker coordinates by \Cref{thm:ideal_conversion}.
\end{proof}

\begin{lemma}\label{lem:addition_pic_mH}
    There exists an algorithm to compute the graph $\Gamma_\beta$ of the addition morphism
    \[
        \beta \colon \Pic_{mH} X \times \Pic_{mH} X \to \Pic_{2mH} X,
    \]
    which corresponds to the tensor product of line bundles.
\end{lemma}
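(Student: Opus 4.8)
The plan is to realize the tensor-product morphism $\beta$ as the descent along the class morphisms of the divisor addition $\sigma$ of \Cref{lem:addition_div}, and to recover its graph by composing graphs and then applying faithfully flat descent. Write $c \colon \Div_{mH} X \to \Pic_{mH} X$ and $c' \colon \Div_{2mH} X \to \Pic_{2mH} X$ for the two class morphisms, whose graphs are computable by \Cref{cor:Gamma_phi}; the second is obtained by invoking that corollary with the parameters $(2m, 2t)$, as in \Cref{rem:Pic_dependence}. The essential compatibility is the canonical isomorphism $\mathcal{O}_X(D) \otimes \mathcal{O}_X(E) \cong \mathcal{O}_X(D+E)$, which yields the identity
\[
\beta \circ (c \times c) = c' \circ \sigma
\]
of morphisms $\Div_{mH} X \times \Div_{mH} X \to \Pic_{2mH} X$. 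Writing $A \coloneqq \Div_{mH} X \times \Div_{mH} X$, $C \coloneqq \Pic_{2mH} X$, $q \coloneqq c \times c$, and $g \coloneqq c' \circ \sigma$, this reads $\beta \circ q = g$.

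First I would compute the graph $\Gamma_g$. Both $\Gamma_\sigma$ (from \Cref{lem:addition_div}) and $\Gamma_{c'}$ are already available, so $\Gamma_g$ is produced by the graph--composition formula from the notation section,
\[
\Gamma_{c' \circ \sigma} = \pi_{A, C}\bigl( (\Gamma_\sigma \times C) \cap (A \times \Gamma_{c'}) \bigr),
\]
with the intersection taken inside $A \times \Div_{2mH} X \times C$. After expressing every ideal in the relevant Pl\"ucker coordinates via \Cref{thm:ideal_conversion}, this intersection and projection are carried out by standard elimination.

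Next I would descend $\Gamma_g$ to $\Gamma_\beta$. Since each $c$ is a projective bundle by \Cref{thm:div_to_pic}, the product $q = c \times c$ is faithfully flat and surjective, and hence so is $q \times \id_C$. The relation $\beta \circ q = g$ gives the scheme-theoretic identity $\Gamma_g = (q \times \id_C)^{-1}(\Gamma_\beta)$, as $\Gamma_g$ is exactly the fiber product defining this preimage. Base change of $q \times \id_C$ along the closed immersion $\Gamma_\beta \hookrightarrow (\Pic_{mH} X \times \Pic_{mH} X) \times C$ therefore produces a faithfully flat surjection $\bar{q} \colon \Gamma_g \to \Gamma_\beta$, so the natural map $\mathcal{O}_{\Gamma_\beta} \to \bar{q}_* \mathcal{O}_{\Gamma_g}$ is injective. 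Hence $\Gamma_\beta$ equals the scheme-theoretic image of $\Gamma_g$ under $q \times \id_C$, which is computed by projective elimination theory with Gr\"obner bases; this yields the defining equations of $\Gamma_\beta$ inside $(\Pic_{mH} X \times \Pic_{mH} X) \times \Pic_{2mH} X$.

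The main obstacle is this final descent step, namely confirming that the closed subscheme produced by elimination carries the correct scheme structure and not merely the correct support. This is exactly where the faithful flatness of $q \times \id_C$ is indispensable: it guarantees that $\Gamma_\beta$ is recovered as the scheme-theoretic image of its own pullback $\Gamma_g$, and that the elimination (saturation) returns precisely this scheme-theoretic image. The remaining steps are a routine assembly of the previously established computable constructions.
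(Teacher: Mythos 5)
Your proposal is correct and takes essentially the same route as the paper: push the divisor-addition graph $\Gamma_\sigma$ from \Cref{lem:addition_div} down along the class morphisms computed in \Cref{cor:Gamma_phi}, and recover $\Gamma_\beta$ as a scheme-theoretic image via elimination. The only differences are that the paper applies the product quotient map $\Pi \colon \Div_{mH} X \times \Div_{mH} X \times \Div_{2mH} X \to \Pic_{mH} X \times \Pic_{mH} X \times \Pic_{2mH} X$ to $\Gamma_\sigma$ in a single elimination step (whereas you first form $\Gamma_{c' \circ \sigma}$ and then descend along $c \times c$), and that your faithful-flatness argument, using that the class maps are projective bundles by \Cref{thm:div_to_pic}, explicitly justifies the scheme-theoretic identification that the paper asserts without proof.
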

\begin{proof}
    Let $\Gamma_\sigma \subset \Div_{mH} X \times \Div_{mH} X \times \Div_{2mH} X$ be the graph of the addition morphism for divisors computed in \Cref{lem:addition_div}.
    Using \Cref{cor:Gamma_phi}, we can compute the quotient morphism
    \[
        \Pi \colon \Div_{mH} X \times \Div_{mH} X \times \Div_{2mH} X \to \Pic_{mH} X \times \Pic_{mH} X \times \Pic_{2mH} X.
    \]
    The graph $\Gamma_\beta$ is precisely the scheme-theoretic image $\Pi(\Gamma_\sigma)$. Its defining equations can be computed using standard elimination theory.
\end{proof}

\begin{lemma}\label{lem:twist_pic}
    There exists an algorithm to compute the graph $\Gamma_\tau$ of the isomorphism
    \[
        \tau \colon \Pic_{mH} X \xrightarrow{\sim} \Pic_{2mH} X,
    \]
    which corresponds to the twisting $\mathcal{L} \mapsto \mathcal{L}(m)$.
\end{lemma}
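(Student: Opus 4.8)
The plan is to realize $\tau$ as the descent of a translation morphism on divisors, mirroring the construction of the addition law in \Cref{lem:addition_div} and \Cref{lem:addition_pic_mH}. The key observation is that twisting by $\mathcal{O}_X(m)$ is the same as tensoring with $\mathcal{O}_X(D_0)$ for any effective divisor $D_0 \sim mH$, and at the level of divisors this tensoring is simply the translation $D \mapsto D + D_0$.

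First I would fix such a $D_0$. Since $X$ is a connected variety, a generic homogeneous form $h \in S_m$ of degree $m$ satisfies $h \notin I_X$; such an $h$ is found algorithmically (e.g.\ by testing $x_0^m, \dots, x_r^m$ or a generic linear combination and checking membership in $I_X$). The associated Cartier divisor $D_0$ cut out by $h$ on $X$ satisfies $\mathcal{O}_X(D_0) \cong \mathcal{O}_X(m)$, so $[D_0] \in (\Div_{mH} X)(k)$. Its image in $\Gr(P_X(t-m),(S_X)_t)$ is the subspace $(I_{D_0/X})_t = h \cdot (S_X)_{t-m}$, whose Pl\"ucker coordinates I read off from a basis.

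Second, I would obtain the translation morphism $\sigma_0 \colon \Div_{mH} X \to \Div_{2mH} X$, $D \mapsto D + D_0$, as the restriction of the addition morphism $\sigma$ of \Cref{lem:addition_div} to $\Div_{mH} X \times \{[D_0]\}$; its graph $\Gamma_{\sigma_0} \subset \Div_{mH} X \times \Div_{2mH} X$ is computed by substituting the coordinates of $[D_0]$ into the middle factor of $\Gamma_\sigma$. Because $\mathcal{O}_X(D + D_0) \cong \mathcal{O}_X(D)(m)$, the square formed by $\sigma_0$, the two quotient maps $q_m \colon \Div_{mH} X \to \Pic_{mH} X$ and $q_{2m} \colon \Div_{2mH} X \to \Pic_{2mH} X$ (both computed via \Cref{cor:Gamma_phi}, applied to the models $(m,t)$ and $(2m,2t)$ as sanctioned by \Cref{rem:Pic_dependence}), and $\tau$ commutes. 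Hence $\Gamma_\tau$ is the scheme-theoretic image of $\Gamma_{\sigma_0}$ under the product $q_m \times q_{2m}$, and I would extract its defining equations by the standard elimination with Gr\"obner bases, exactly as in \Cref{lem:addition_pic_mH}.

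The only nontrivial point, and thus the main obstacle, is verifying that this scheme-theoretic image is genuinely the graph of the morphism $\tau$, rather than merely agreeing with it on points. This follows because $q_m$ is a projective bundle, hence faithfully flat and surjective: the point-level identity $q_{2m}(D + D_0) = \tau(q_m(D))$ together with faithfully flat descent determines $\tau$ uniquely and identifies its graph with the computed image. That $\tau$ is an isomorphism, with inverse the symmetric twist by $-m$, is not required to compute $\Gamma_\tau$, though it can be checked in the same manner.
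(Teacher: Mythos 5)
Your proposal is correct and follows essentially the same route as the paper: the paper also specializes the middle factor of the addition graph $\Gamma_\sigma$ from \Cref{lem:addition_div} at a fixed divisor (it uses $[mH]$ itself rather than a generic $D_0 \sim mH$) to obtain the translation $D \mapsto D + mH$, and then pushes the resulting graph down through the quotients to $\Pic_{mH} X \times \Pic_{2mH} X$ by elimination, as in \Cref{lem:addition_pic_mH}. Your additional verification that the scheme-theoretic image is genuinely $\Gamma_\tau$, via faithful flatness of the projective bundle $q_m$, is a valid justification of a step the paper leaves implicit.
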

\begin{proof}
    Let $\Gamma_\sigma \subset \Div_{mH} X \times \Div_{mH} X \times \Div_{2mH} X$ be the graph of the addition morphism computed in \Cref{lem:addition_div}.
    Computing the fiber of the projection to the middle factor $\Gamma_\sigma \to \Div_{mH} X$ over $[mH]$ yields the graph $\Gamma_\rho$ of the morphism
    \[
        \rho \colon \Div_{mH} X \to \Div_{2mH} X
    \]
    defined by $D \mapsto D + mH$.
    As in the proof of \Cref{lem:addition_pic_mH}, the image of $\Gamma_\rho $ under the quotient 
    \[ \Div_{mH} X \times \Div_{2mH} X \to \Pic_{mH} X \times \Pic_{2mH} X\] 
    is the graph $\Gamma_\tau$.
\end{proof}

\begin{lemma}\label{lem:pic_addition}
    There exists an algorithm to compute the graph $\Gamma_\alpha$ of the addition morphism
    \[
        \alpha \colon \Pic^\tau X \times \Pic^\tau X \to \Pic^\tau X.
    \]
\end{lemma}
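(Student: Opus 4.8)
The strategy is to realize $\alpha$ as the composite $\tau^{-1} \circ \beta$ of the two morphisms already computed and then to apply the graph-composition formula recalled in the Notation section. Recall that we identify $\Pic^\tau X$ with $\Pic_{mH} X$ through the twist $\mathcal{L} \mapsto \mathcal{L}(m)$, so the group law $\alpha$ on $\Pic^\tau X$ is transported to an addition on the model $\Pic_{mH} X$. First I would verify that $\alpha = \tau^{-1} \circ \beta$ on the functor of points. For a local $k$-algebra $R$ and classes $L_1, L_2 \in (\Pic_{mH} X)(R)$, the morphism $\beta$ sends $(L_1, L_2)$ to $L_1 \otimes L_2 \in (\Pic_{2mH} X)(R)$, and $\tau^{-1}$, the twist by $\mathcal{O}_X(-m)$, then sends this to $(L_1 \otimes L_2)(-m)$. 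On the other hand, transporting the tensor-product group law along the identification $\Pic^\tau X \cong \Pic_{mH} X$ gives precisely $\alpha(L_1, L_2) = (L_1 \otimes L_2)(-m)$. Hence the two morphisms agree on every local $R$-point, and their graphs coincide by \Cref{lem:subscheme_local_determination}.

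Next I would produce the graph of the inverse twist. Since $\tau$ is an isomorphism, $\Gamma_{\tau^{-1}} \subset \Pic_{2mH} X \times \Pic_{mH} X$ is obtained from the graph $\Gamma_\tau$ of \Cref{lem:twist_pic} merely by interchanging the two factors, that is, by reordering the corresponding blocks of Pl\"ucker coordinates in the defining equations. Writing $A = \Pic_{mH} X \times \Pic_{mH} X$, $B = \Pic_{2mH} X$, and $C = \Pic_{mH} X$, the graph-composition formula yields
\[
\Gamma_\alpha = \pi_{A, C}\!\left( (\Gamma_\beta \times C) \cap (A \times \Gamma_{\tau^{-1}}) \right),
\]
where the intersection is taken inside $A \times B \times C$ and $\pi_{A, C}$ is the projection onto $A \times C$. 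The graph $\Gamma_\beta \subset A \times B$ is supplied by \Cref{lem:addition_pic_mH} and $\Gamma_{\tau^{-1}} \subset B \times C$ by the preceding step. The scheme-theoretic intersection is computed by summing the pulled-back defining ideals, and the projection $\pi_{A, C}$ is carried out by projective elimination theory with a Gr\"obner basis, exactly as in the earlier constructions.

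The mathematical content is concentrated entirely in the reduction $\alpha = \tau^{-1} \circ \beta$; once this is in hand, the remainder is the same intersection-and-project routine already used to build $\Gamma_\beta$ and $\Gamma_\tau$. The one point demanding care is bookkeeping: the factors $A$, $B$, and $C$ carry distinct Pl\"ucker coordinate systems coming from the two models $\Pic_{mH} X$ and $\Pic_{2mH} X$ of \Cref{rem:Pic_dependence}, so one must track which coordinate block belongs to which factor and perform the multi-homogeneous elimination in the correct ambient product. I expect this coordinate bookkeeping, rather than any geometric difficulty, to be the main obstacle.
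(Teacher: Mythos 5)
Your proposal is correct and follows essentially the same route as the paper: both identify $\Pic^\tau X$ with $\Pic_{mH} X$, write $\alpha = \tau^{-1} \circ \beta$, obtain $\Gamma_{\tau^{-1}}$ by swapping the factors of $\Gamma_\tau$, and compute $\Gamma_\alpha$ via the intersection-and-projection (elimination) formula in the product $\Pic_{mH} X \times \Pic_{mH} X \times \Pic_{2mH} X \times \Pic_{mH} X$. Your explicit functor-of-points check that $\alpha = \tau^{-1}\circ\beta$ is a nice addition that the paper leaves implicit.
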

\begin{proof}
    As mentioned in \Cref{rem:Pic_dependence}, we identify $\Pic^\tau X$ with $\Pic_{mH} X$.
    Then $\alpha$ is the composition of $\beta$ in \Cref{lem:addition_pic_mH} and $\tau^{-1}$ where $\tau$ is the isomorphism in \Cref{lem:twist_pic}.
    In terms of graphs, $\Gamma_{\tau^{-1}}$ is obtained by swapping the two factors of $\Gamma_\tau$.
    The graph $\Gamma_\alpha$ is the image of the intersection
    \[
        (\Gamma_\beta \times \Pic_{mH} X) \cap (\Pic_{mH} X \times \Pic_{mH} X \times \Gamma_{\tau^{-1}}) \subset \Pic_{mH} X \times \Pic_{mH} X\times \Pic_{2mH} X\times \Pic_{mH} X
    \]
    under the projection that eliminates the factor $\Pic_{2mH} X$.
\end{proof}

We are now ready to establish the second main result of this paper.

\PicTauGroup
\begin{proof}
    We identify $\Pic^\tau X$ with $\Pic_{mH} X$. The addition morphism $\alpha$ is computed in \Cref{lem:pic_addition}. The graph of the inverse $\iota$ is obtained as the fiber over $[2mH]$ of the projection of
    \[
        \Gamma_\beta \subset \Pic_{mH} X \times \Pic_{mH} X \times \Pic_{2mH} X
    \]
    to the last factor, where $\beta$ is the morphism in \Cref{lem:addition_pic_mH}. The identity section $\epsilon$ is simply given by the point $[mH]$.
\end{proof}

\section{Applications}\label{sec:applications}

In this section, we present applications of the two main theorems established in this paper. Before proceeding to the algorithms, we briefly discuss the homological interpretation of $\Pic^\tau X$. As mentioned in the introduction, $\Pic^\tau X$ should be conceptually regarded as the dual of the universal integral first homology group of $X$. In fact, over the complex numbers, $\Pic^\tau X$ is naturally isomorphic to the Pontryagin dual of the singular first homology group. Consequently, it encapsulates the full information of $H_1(X, \mathbb{Z})$.

\begin{proposition}\label{prop:pic_tau_C}
    If $k = \mathbb{C}$, there is a natural isomorphism of locally compact groups
    \[
        \Pic^\tau X \cong H_1(X, \mathbb{Z})^\dual,
    \]
    where $(\cdot)^\dual$ denotes the Pontryagin dual.
\end{proposition}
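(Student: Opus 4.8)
The plan is to realize the stated isomorphism by comparing the holomorphic exponential sequence on the compact Kähler manifold $X(\mathbb{C})$ with its locally constant analogue. First I would make the Pontryagin dual concrete: since $H_1(X,\mathbb{Z})$ is finitely generated, the universal coefficient theorem gives a natural identification
\[
H_1(X,\mathbb{Z})^\dual = \Hom(H_1(X,\mathbb{Z}), \mathbb{R}/\mathbb{Z}) = H^1(X, \underline{\mathbb{R}/\mathbb{Z}}),
\]
the first cohomology of the constant sheaf $\underline{\mathbb{R}/\mathbb{Z}} = \underline{U(1)}$, i.e. the group of flat unitary line bundles. The inclusion of sheaves $\underline{U(1)} \hookrightarrow \mathcal{O}_X^\times$ (a unitary character produces a holomorphic line bundle) induces a natural homomorphism $H^1(X,\underline{U(1)}) \to H^1(X,\mathcal{O}_X^\times) = \Pic X$; by GAGA the target is the group of $\mathbb{C}$-points of the algebraic Picard scheme, which is reduced in characteristic $0$. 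This is the candidate natural map, and the goal is to show it is a topological isomorphism onto $\Pic^\tau X$.

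Next I would form the morphism of short exact sequences of sheaves $0 \to \mathbb{Z} \to \mathbb{R} \to \underline{U(1)} \to 0$ and $0 \to \mathbb{Z} \to \mathcal{O}_X \to \mathcal{O}_X^\times \to 0$, compatible via the constant inclusion $\mathbb{R} \hookrightarrow \mathcal{O}_X$ and $r \mapsto e^{2\pi i r}$, and pass to the associated ladder of long exact cohomology sequences. The two Hodge-theoretic inputs I would record are: (i) $H^1(X,\mathbb{Z})$ is torsion-free and the projection $H^1(X,\mathbb{R}) \hookrightarrow H^1(X,\mathbb{C}) = H^{1,0}\oplus H^{0,1} \twoheadrightarrow H^{0,1} = H^1(X,\mathcal{O}_X)$ is an $\mathbb{R}$-linear isomorphism carrying the lattice $H^1(X,\mathbb{Z})$ isomorphically onto itself, hence descends to a natural isomorphism of real tori $H^1(X,\mathbb{R})/H^1(X,\mathbb{Z}) \xrightarrow{\sim} \Pic^0 X$; and (ii) by the Lefschetz $(1,1)$ theorem every torsion class of $H^2(X,\mathbb{Z})$ lies in $\NS X$, so $\ker\!\big(H^2(X,\mathbb{Z}) \to H^2(X,\mathbb{R})\big) = H^2(X,\mathbb{Z})_\tor = (\NS X)_\tor$.

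These inputs let me extract the two short exact sequences
\[
0 \to H^1(X,\mathbb{R})/H^1(X,\mathbb{Z}) \to H^1(X,\underline{U(1)}) \to H^2(X,\mathbb{Z})_\tor \to 0
\]
and
\[
0 \to \Pic^0 X \to \Pic^\tau X \to (\NS X)_\tor \to 0,
\]
together with a vertical map given by the comparison map in the middle, the torus isomorphism (i) on the left, and the identity (ii) on the right; the candidate map lands in $\Pic^\tau X$ because its composite with $c_1$ equals the top connecting map, whose image is the torsion $H^2(X,\mathbb{Z})_\tor$. The five lemma then shows $H_1(X,\mathbb{Z})^\dual = H^1(X,\underline{U(1)}) \to \Pic^\tau X$ is a group isomorphism. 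To upgrade it to an isomorphism of locally compact groups, I note that both sides are compact Hausdorff abelian groups (the source as the dual of a discrete finitely generated group, the target as the $\mathbb{C}$-points of a projective group scheme), that the map is continuous by (i), and that a continuous bijective homomorphism of compact Hausdorff topological groups is automatically an isomorphism of topological groups.

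The main obstacle, and the only place genuine Hodge theory enters, is input (i): equivalently, the surjectivity statement that every numerically trivial holomorphic line bundle admits a flat unitary structure, packaged as the isomorphism $H^1(X,\mathbb{R}) \xrightarrow{\sim} H^1(X,\mathcal{O}_X)$. Once this is established, the universal-coefficient identification, the diagram chase, and the compactness argument are all formal.
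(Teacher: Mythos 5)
Your proposal is correct and follows essentially the same route as the paper: the ladder of long exact sequences coming from the map of exponential sequences $0 \to \mathbb{Z} \to \mathbb{R} \to \mathbb{R}/\mathbb{Z} \to 0$ and $0 \to \mathbb{Z} \to \mathcal{O}_X \to \mathcal{O}_X^\times \to 0$, the Hodge-theoretic isomorphism $H^1(X,\mathbb{R}) \cong H^1(X,\mathcal{O}_X)$, the five lemma, and the universal coefficient identification of $H^1(X,\mathbb{R}/\mathbb{Z})$ with $\Hom(H_1(X,\mathbb{Z}),\mathbb{R}/\mathbb{Z})$. You additionally make explicit a few points the paper leaves implicit (the Lefschetz $(1,1)$ identification of the torsion image in $H^2(X,\mathbb{Z})$, GAGA, and the upgrade from a group isomorphism to a topological one), which is fine but does not change the argument.
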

\begin{proof}
    Since $k = \mathbb{C}$, we regard $X$ as a complex manifold equipped with the analytic topology. Consider the morphism between the following exact sequences.
    \begin{equation*}
        \begin{tikzcd}
            0 \arrow[r] & \mathbb{Z} \arrow[r] \arrow[d, equal] & \mathbb{R} \arrow[r] \arrow[d, "2\pi i"] & \mathbb{R}/\mathbb{Z} \arrow[r] \arrow[d, "\exp(2\pi i \cdot)"] & 0 \\
            0 \arrow[r] & \mathbb{Z} \arrow[r, "2\pi i"] & \mathcal{O}_X \arrow[r, "\exp"] & \mathcal{O}_X^\times \arrow[r] & 0
        \end{tikzcd}
    \end{equation*}
    The associated long exact sequences induce the following morphism of exact sequences.
    \begin{equation*}
        \begin{tikzcd}
            H^1(X, \mathbb{Z}) \arrow[r] \arrow[d, equal] 
            & H^1(X, \mathbb{R}) \arrow[r] \arrow[d, "\sim", sloped] 
            & H^1(X, \mathbb{R}/\mathbb{Z}) \arrow[r] \arrow[d] 
            & H^2(X, \mathbb{Z})_{\tor} \arrow[r] \arrow[d, equal] 
            & 0 \\
            H^1(X, \mathbb{Z}) \arrow[r] 
            & H^1(X, \mathcal{O}_X) \arrow[r] 
            & \Pic^\tau X \arrow[r] 
            & H^2(X, \mathbb{Z})_{\tor} \arrow[r] 
            & 0
        \end{tikzcd}
    \end{equation*}
    The vertical map $H^1(X, \mathbb{R}) \to H^1(X, \mathcal{O}_X)$ is an isomorphism by Hodge theory. Consequently, the Five Lemma and the Universal Coefficient Theorem imply
    \[
        \Pic^\tau X \cong H^1(X, \mathbb{R}/\mathbb{Z}) \cong \Hom(H_1(X, \mathbb{Z}), \mathbb{R}/\mathbb{Z}). \qedhere 
    \]
\end{proof}

Although Proposition \ref{prop:pic_tau_C} has no direct analogue over general fields, $\Pic^\tau X$ remains the conceptual dual of the universal integral first homology. This conceptual framework provides the intuition that $\pi^\et_1(X, x)^{\ab}$ and $H_{\et}^1(X, \mathbb{Z}/n\mathbb{Z})$ can be recovered from $\Pic^\tau X$. Crucially, as we will demonstrate, in positive characteristic, the non-reduced structure of $\Pic^\tau X$ encodes the essential $p$-power torsion data of these objects. However, before computing these homological invariants, we first present algorithms to compute the requisite geometric objects.

\subsection{Computing \texorpdfstring{$\Alb X$ and $(\NNS X)_\tor$}{Alb X and (NS X)\_tor}}

As preliminary steps, we compute the Albanese variety and the torsion subgroup of the N\'eron-Severi group scheme. Recall that the Albanese variety $\Alb X$ is the dual abelian variety of the Picard variety $(\Pic^0 X)_\red$.

\begin{proposition}\label{prop:Alb}
    There exists an algorithm to compute the Albanese variety $\Alb X$ of $X$ together with its group scheme structure.
\end{proposition}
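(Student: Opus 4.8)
The plan is to produce $\Alb X$ by explicitly constructing $(\Pic^0 X)_\red$ and then dualizing. The key observation is that $\Pic^0 X$, the identity component of $\Pic^\tau X$, is already accessible: having computed $\Pic^\tau X$ together with its group law $(\alpha,\iota,\epsilon)$ via \Cref{thm:Pic_tau} and \Cref{thm:group_structure}, I can isolate the connected component of the identity section $\epsilon = [mH]$ by primary decomposition of the defining ideal, selecting the unique component through that point. This yields $\Pic^0 X$ as a closed subscheme of projective space, and the group scheme structure restricts to it. Passing to the reduction $(\Pic^0 X)_\red$ is then a matter of computing the radical of the defining ideal, for which explicit algorithms are available over our field $k$ by the standing assumptions.

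Next I would verify and exploit that $(\Pic^0 X)_\red$ is an abelian variety: over a perfect field the reduced identity component of a group scheme of finite type is a smooth subgroup scheme, hence (being connected, projective, and smooth) an abelian variety. Since $k$ is finitely generated over $\mathbb{Q}$ or $\mathbb{F}_p$, it may fail to be perfect, so the cleanest route is to base change to $\bar k$ (or a suitable perfect closure) where reducedness of the identity component is guaranteed, carry out the computation there, and descend; the Galois/automorphism bookkeeping is handled by the same finite-extension machinery invoked in \Cref{thm:first_cohomology}. I then transport the restricted group law from $\Pic^\tau X$ onto this abelian variety $A \coloneqq (\Pic^0 X)_\red$ by intersecting the graph $\Gamma_\alpha$ with the relevant product of reduced components and eliminating, exactly as in \Cref{lem:pic_addition}.

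Finally I would form the dual abelian variety $A^\dual = \Alb X$. The clean conceptual identity is that the dual of $(\Pic^0 X)_\red$ is the Albanese variety, so the remaining task is algorithmic: to compute a projective embedding of $A^\dual$ together with its group law from the data of $A$. One concrete approach is to realize $A^\dual$ as $\Pic^0 A$ and apply the very algorithm of \Cref{thm:Pic_tau} and \Cref{thm:group_structure} to the abelian variety $A$ itself, extracting the identity component of $\Pic^\tau A$ and reducing; since $A$ is already smooth projective and computed explicitly, this is a legitimate recursive invocation of the main theorems. The group scheme structure on $\Alb X$ is then obtained as in \Cref{thm:group_structure}.

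The main obstacle I anticipate is the descent and reducedness step over a non-perfect field of characteristic $p$. Computing the radical and the identity component commutes with base change to $\bar k$, but ensuring that the reduced subgroup scheme is defined over $k$ (rather than merely over a finite inseparable extension) and that its group law descends requires care; this is precisely where the distinction between $\Pic^\tau X$ and $(\Pic^\tau X)_\red$ carries arithmetic content, as emphasized in the introduction. Concretely, I must confirm that $(\Pic^0 X)_\red$ is a subgroup scheme over $k$—a subtlety that genuinely fails for non-perfect $k$ unless one works with a smooth model—so the honest statement may require either a perfect base or an explicit separable descent datum, and verifying this rigorously is the delicate point of the argument.
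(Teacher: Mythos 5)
Your proposal is correct and takes essentially the same route as the paper: compute $\Pic^\tau X$, pass to the reduced identity component $(\Pic^0 X)_\red$ via primary decomposition and a radical computation, then dualize by recursively applying \Cref{thm:Pic_tau} and \Cref{thm:group_structure} to that abelian variety --- the paper writes this as $\Alb X = \Pic^\tau\bigl((\Pic^0 X)_\red\bigr)$, which coincides with your $\Pic^0 A$ since $\Pic^\tau$ of an abelian variety is already connected and reduced. The imperfect-field subtlety you flag (whether $(\Pic^0 X)_\red$ is a geometrically reduced subgroup scheme over $k$, hence an abelian variety) is genuine but is not treated in the paper's proof either; the paper absorbs it into its definition of $\Alb X$ as the dual of Matsusaka's Picard variety $(\Pic^0 X)_\red$, so your extra care goes beyond, rather than falls short of, the paper's argument.
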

\begin{proof}
    First, we compute $\Pic^\tau X$ using \Cref{thm:Pic_tau}. Then, we compute the reduced identity component $(\Pic^0 X)_\red$ by computing the radical of the ideal defining $\Pic^0 X$. We apply \Cref{thm:Pic_tau} once more to compute
    \[
        \Alb X = (\Pic^0 X)_\red^\dual = \Pic^\tau((\Pic^0 X)_\red).
    \]
    We can also compute the group scheme structure of $\Alb X$ by \Cref{thm:group_structure}.
\end{proof}

To compute $\pi^\et_1(X, x)^{\ab}$ and $H_{\et}^1(X, \mathbb{Z}/n\mathbb{Z})$, we also require the notion of the N\'eron-Severi group scheme \cite[p.~70]{Stix}. For our purposes, it suffices to focus on its torsion subgroup.

\begin{definition}\label{def:NS_tor}
    The torsion subgroup of the N\'eron-Severi group scheme of $X$ is defined by the exact sequence
    \[
        0 \to (\Pic^0 X)_\red \to \Pic^\tau X \to (\NNS X)_\tor \to 0.
    \]
\end{definition}

Before describing the algorithms, we note that by \cite[Theorem~3.2.1 and Lemma~3.3.7]{Brion},
\[(\NNS X)_\tor \cong \Spec H^0(\Pic^\tau X, \mathcal{O}_{\Pic^\tau X}),\]
which implies that $(\NNS X)_\tor$ is a finite group scheme. Consequently, the computation of $(\NNS X)_\tor$ reduces to determining the Hopf algebra structure of the global sections of $\Pic^\tau X$. We review the standard computation of global sections to fix notation for the Hopf algebra structure.

\begin{lemma}\label{lem:ring_structure}
    Let $Y \subset \mathbb{P}^n$ be a projective scheme over $k$. There exists an algorithm to compute the finite-dimensional $k$-algebra $(S_Y)_0 \coloneqq \Gamma(Y, \mathcal{O}_Y)$ represented by \v{C}ech cocycles. In particular, the algorithm determines a basis and explicitly describes the unit map $e \colon k \to (S_Y)_0$ and the multiplication map $m \colon (S_Y)_0 \otimes_k (S_Y)_0 \to (S_Y)_0$ in terms of this basis.
\end{lemma}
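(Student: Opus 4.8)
The plan is to realize $\Gamma(Y,\mathcal{O}_Y)$ as the equalizer of the \v{C}ech complex for the standard affine cover and then bound the degrees so that this equalizer becomes a finite linear-algebra computation. Let $I \subset S = k[x_0,\dots,x_n]$ be the saturated homogeneous ideal of $Y$, set $R = S/I$, and take the cover $U_i = D_+(x_i)$ for $i \in [n+1]$. Each chart has coordinate ring $A_i \coloneqq (R_{x_i})_0$, the degree-$0$ part of the localization, which is an explicitly presented affine $k$-algebra obtained by dehomogenizing $I$ at $x_i$; the overlaps give $A_{ij} \coloneqq (R_{x_ix_j})_0$. By definition of the structure sheaf, $\Gamma(Y,\mathcal{O}_Y)$ is the kernel of the \v{C}ech differential $\prod_i A_i \to \prod_{i<j} A_{ij}$, which sends a tuple $(s_i)_i$ to $(s_i - s_j)_{i<j}$; a $0$-cocycle is exactly a compatible tuple, i.e.\ a global section. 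This kernel is a finite-dimensional $k$-algebra because $Y$ is proper over $k$.

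The obstacle is that the $A_i$ are infinite-dimensional, so the kernel cannot be computed by naive linear algebra. To make it finite, I would represent a candidate section in degree $d$ by a tuple $(f_i)_i \in \bigoplus_i R_d$, with $f_i$ standing for $f_i/x_i^d \in A_i$. The gluing condition $f_i/x_i^d = f_j/x_j^d$ on $U_{ij}$ is equivalent to $x_j^d f_i - x_i^d f_j$ vanishing in $R_{x_ix_j}$, that is, to the membership
\[
x_j^d f_i - x_i^d f_j \in \bigl(I : (x_ix_j)^\infty\bigr)_{2d},
\]
a finite system of linear equations on the coefficients of the $f_i$ once the colon ideal is computed by a Gr\"obner basis. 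Quotienting by the tuples with $f_i \in (I : x_i^\infty)_d$, which represent the zero section on $U_i$, yields a finite-dimensional space $\Gamma_d$, and multiplying representatives by the $x_i$ defines inclusions $\Gamma_d \hookrightarrow \Gamma_{d+1}$ whose union is $\Gamma(Y,\mathcal{O}_Y)$.

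The hard part will be certifying termination, i.e.\ exhibiting an explicit $d$ for which $\Gamma_d = \Gamma(Y,\mathcal{O}_Y)$. This is where Castelnuovo--Mumford regularity enters. For any $s \in \Gamma(Y,\mathcal{O}_Y)$ and any $i$, the product $x_i^d s$ lies in $H^0(Y,\mathcal{O}_Y(d))$, and once $d \ge \operatorname{reg}(\mathcal{O}_Y)$ one has $H^0(Y,\mathcal{O}_Y(d)) = R_d$ since $I$ is saturated; hence $s|_{U_i} = (x_i^d s)/x_i^d$ with $x_i^d s \in R_d$, and the evaluation $s \mapsto (x_i^d s)_i$ is an isomorphism $\Gamma(Y,\mathcal{O}_Y) \xrightarrow{\sim} \Gamma_d$. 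The regularity of $\mathcal{O}_Y$ is bounded by $\varphi(Y) - 1$ and is therefore computable as in \Cref{sec:numerical}, which fixes the stopping degree. Equivalently, one may identify $\Gamma(Y,\mathcal{O}_Y)$ with the degree-$0$ graded $S$-module endomorphisms of $\bigoplus_{d'} H^0(Y,\mathcal{O}_Y(d'))$, which past the regularity reduces to finding the linear maps on finitely many graded pieces $R_{d'}$ commuting with multiplication by each $x_j$; I expect the verification of this bound and the comparison of the two descriptions to be the only genuinely delicate point.

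Finally, fixing a $k$-basis of $\Gamma(Y,\mathcal{O}_Y)$ as \v{C}ech $0$-cocycles $\sigma^{(\ell)} = (f^{(\ell)}_i/x_i^d)_i$, the algebra structure follows immediately. The unit map $e \colon k \to \Gamma(Y,\mathcal{O}_Y)$ sends $1$ to the cocycle $(1,\dots,1)$, and the multiplication $m$ is computed componentwise,
\[
\sigma^{(\ell)}\sigma^{(\ell')} = \bigl(f^{(\ell)}_i f^{(\ell')}_i / x_i^{2d}\bigr)_i,
\]
followed by re-expressing this product in the chosen basis by linear algebra in the charts $A_i$ (reducing numerators modulo the dehomogenized ideal and matching coordinates). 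This produces the explicit unit and the structure constants of the finite-dimensional $k$-algebra $(S_Y)_0$.
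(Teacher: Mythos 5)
Your overall route is the same as the paper's: represent a global section as a \v{C}ech $0$-cocycle $(f_i/x_i^d)_i$ for the standard cover, with all numerators $f_i$ taken in a single computable graded piece of degree $d$ chosen past an explicit regularity bound, impose the gluing as linear conditions on the coefficients, and then read off the unit and the multiplication componentwise. The paper streamlines this by working directly in $S_Y$: for $t \ge \varphi(Y)$ one has $(S_Y)_t \cong S_t/(I_Y)_t$ and $(S_Y)_{2t} \cong S_{2t}/(I_Y)_{2t}$, and the gluing condition is simply the linear equation $x_j^t f_i = x_i^t f_j$ in $(S_Y)_{2t}$ --- no colon ideals, no quotient by ``locally zero'' tuples, and no filtration $\Gamma_d \hookrightarrow \Gamma_{d+1}$ are needed, because in $S_Y$ a section vanishing on $D(x_i)\cap Y$ for all $i$ is already zero. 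Your version with the saturations $(I : (x_ix_j)^\infty)$ is also correct, just heavier; it buys you a description of $\Gamma_d$ valid in every degree $d$, at the cost of extra Gr\"obner computations.

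There is one incorrect step, and it sits exactly where you predicted the delicacy would be: the claim that ``once $d \ge \operatorname{reg}(\mathcal{O}_Y)$ one has $H^0(Y,\mathcal{O}_Y(d)) = R_d$ since $I$ is saturated'' is false. Saturatedness gives injectivity of $R_d \to H^0(Y,\mathcal{O}_Y(d))$; surjectivity is the vanishing of $H^1(\mathbb{P}^n, \mathscr{I}_Y(d))$, which is governed by the regularity of the \emph{ideal sheaf}, not of $\mathcal{O}_Y$. For example, if $Y \subset \mathbb{P}^3$ is two disjoint lines, then $\mathcal{O}_Y$ is $0$-regular, yet $R_0 = k \subsetneq k^2 = H^0(Y,\mathcal{O}_Y)$, so the threshold $\operatorname{reg}(\mathcal{O}_Y)$ alone does not certify termination by your argument. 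The repair is immediate and is what the paper does: take $d \ge \varphi(Y)$, so that $\mathscr{I}_Y$ is $d$-regular; then $H^1(\mathbb{P}^n,\mathscr{I}_Y(d)) = 0$ (indeed $\varphi(Y)$-regularity already kills $H^1(\mathscr{I}_Y(d))$ for all $d \ge \varphi(Y)-1$ by persistence), hence $R_d \xrightarrow{\sim} H^0(Y,\mathcal{O}_Y(d))$, the tuple $(x_i^d s)_i$ has entries in $R_d$, and your map $\Gamma(Y,\mathcal{O}_Y) \to \Gamma_d$ is an isomorphism. With the stopping degree phrased in terms of $\varphi(Y)$ rather than $\operatorname{reg}(\mathcal{O}_Y)$, your proof is complete.
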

\begin{proof}
    Let $S \coloneqq k[x_0, \dots, x_n]$ be the coordinate ring of $\mathbb{P}^n$. Let $t$ be an integer such that the ideal sheaf $\mathscr{I}_Y$ is $t$-regular, for instance $t \ge \varphi(Y)$. Since $H^1(\mathbb{P}^n, \mathscr{I}_Y(t)) = 0$, we have
    \[ (S_Y)_t \cong S_t / (I_Y)_t. \]
    As $S$ and $I_Y$ are explicitly given, the vector space $(S_Y)_t$ is computable. For any global section $f \in (S_Y)_0$ and any index $i \in \{0, \dots, n\}$, we have $x_i^t f \in (S_Y)_t$. Consequently, on the open set $D(x_i)$, the section $f$ can be represented as $f_i/x_i^t$ for some $f_i \in (S_Y)_t$.

    Therefore, any $f \in (S_Y)_0$ is determined by a collection of local sections $f|_{D(x_i)} = f_i/x_i^t$ with $f_i \in (S_Y)_t$, subject to the compatibility condition $f_i/x_i^t = f_j/x_j^t$ on $D(x_i x_j)$. This condition is equivalent to $x_j^t f_i = x_i^t f_j$ in $(S_Y)_{2t}$. Thus, $(S_Y)_0$ is isomorphic to the kernel of the map
    \begin{align*}
        \psi \colon \prod_{0 \le i \le n} (S_Y)_t &\to \prod_{0 \le i < j \le n} (S_Y)_{2t}, \\
        (f_i)_i &\mapsto (x_j^t f_i - x_i^t f_j)_{i,j}.
    \end{align*}
    Since this is a linear map between computable finite-dimensional vector spaces, we can compute a basis for $(S_Y)_0$.

    Moreover, the unit map $e$ sends $1_k$ to the tuple $(x_0^t, \dots, x_n^t)$. The multiplication map $m$ sends $(f_i)_i$ and $(g_i)_i$ to the unique $(h_i)_i$ such that $x_i^t h_i = f_i g_i$ in $(S_Y)_{2t}$ for all $i$. By computing the multiplication for the basis elements of $(S_Y)_0 \otimes (S_Y)_0$, we can explicitly determine the full map $m \colon (S_Y)_0 \otimes (S_Y)_0 \to (S_Y)_0$.
\end{proof}

\begin{lemma}\label{lem:hopf_structure}
    Suppose further that $Y \subset \mathbb{P}^n$ is a projective group scheme. There exists an algorithm to compute the finite-dimensional Hopf algebra structure on $(S_Y)_0 \coloneqq \Gamma(Y, \mathcal{O}_Y)$. In particular, the algorithm explicitly describes the counit $\epsilon \colon (S_Y)_0 \to k$, the antipode $\imath \colon (S_Y)_0 \to (S_Y)_0$, and the comultiplication $\mu \colon (S_Y)_0 \to (S_Y)_0 \otimes_k (S_Y)_0$ with respect to the basis computed in \Cref{lem:ring_structure}.
\end{lemma}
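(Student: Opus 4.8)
The plan is to obtain the three Hopf operations as the pullbacks, under the contravariant functor $Y \mapsto H^0(Y, \mathcal{O}_Y)$, of the defining morphisms of the group-scheme structure on $Y$: the multiplication $m_Y \colon Y \times Y \to Y$, the inverse $\iota_Y \colon Y \to Y$, and the identity section $\epsilon_Y \colon \Spec k \to Y$, all of which we take as given by their graphs (for $Y = \Pic^\tau X$ they are produced by \Cref{thm:group_structure}). Since the Hopf-algebra axioms are precisely the group-scheme axioms---commutative diagrams---transported through $H^0$, the resulting maps automatically satisfy the counit, coassociativity, and antipode identities, so no separate verification of the axioms is needed. The whole problem therefore reduces to one sub-algorithm: given a morphism $f \colon Y \to Z$ of projective schemes presented by its graph, compute the pullback $f^* \colon H^0(Z, \mathcal{O}_Z) \to H^0(Y, \mathcal{O}_Y)$ in terms of the \v{C}ech bases furnished by \Cref{lem:ring_structure}.

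To build this sub-algorithm I would work on the graph itself. Let $p \colon \Gamma_f \to Y$ and $q \colon \Gamma_f \to Z$ denote the restrictions to $\Gamma_f \subset \mathbb{P}^n \times \mathbb{P}^{n'}$ of the two coordinate projections. Then $p$ is an isomorphism and $f = q \circ p^{-1}$, so that $f^* = (p^*)^{-1} \circ q^*$. Applying the Segre embedding lets me regard $\Gamma_f$ as a closed subscheme of a single projective space, whose ideal is computable from the bihomogeneous ideal of $\Gamma_f$, and \Cref{lem:ring_structure} then computes $H^0(\Gamma_f, \mathcal{O}_{\Gamma_f})$. A section of $Z$ presented as local data $g_j / y_j^t$ on the charts $D(y_j)$ pulls back along the coordinate projection $q$ to the same expressions now read on $\Gamma_f$, and likewise for $p$; re-expressing these in the \v{C}ech basis of $H^0(\Gamma_f, \mathcal{O}_{\Gamma_f})$ is finite-dimensional linear algebra. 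Because $p$ is an isomorphism of schemes, $p^*$ is an invertible $k$-algebra map, and inverting it yields $f^*$.

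With this sub-algorithm the three operations follow. The antipode is $\imath = \iota_Y^*$, and the counit is $\epsilon = \epsilon_Y^*$, which is simply evaluation at the identity point: for $f$ represented by $(f_i)_i$ and the identity point $P \in Y(k)$, one has $\epsilon(f) = f_i(P) / x_i(P)^t$ for any $i$ with $x_i(P) \neq 0$. For the comultiplication $\mu = m_Y^*$ I invoke the K\"unneth formula for coherent cohomology of proper schemes over a field, which gives a canonical isomorphism
\[
H^0(Y \times Y, \mathcal{O}_{Y \times Y}) \cong H^0(Y, \mathcal{O}_Y) \otimes_k H^0(Y, \mathcal{O}_Y)
\]
under which $a \boxtimes b = \mathrm{pr}_0^*\, a \cdot \mathrm{pr}_1^*\, b$. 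Computing $m_Y^* f$ by the sub-algorithm and then solving the linear system that expresses it in the basis $\{ f_i \boxtimes f_j \}$ produces $\mu$ explicitly.

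The main obstacle I anticipate is the \v{C}ech bookkeeping across the product $\mathbb{P}^n \times \mathbb{P}^{n'}$: one must choose a uniform twist so that the local representatives of $p^* f$ and $q^* g$ on the Segre-embedded graph are mutually compatible, and correctly match the product charts $D(x_i y_j)$ with the charts of the single ambient projective space. Once this translation between the multigraded and the singly-graded \v{C}ech pictures is set up carefully, the remaining steps are routine linear algebra over $k$.
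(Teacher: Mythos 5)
Your proposal is correct and follows essentially the same route as the paper: the counit is evaluation at the identity point, and the antipode and comultiplication are extracted from the graphs of the inverse and multiplication morphisms by finite-dimensional linear algebra over $k$ (with K\"unneth identifying $H^0(Y\times Y,\mathcal{O}_{Y\times Y})$ with $(S_Y)_0\otimes_k (S_Y)_0$, which the paper uses implicitly). The paper streamlines your pullback sub-algorithm: rather than Segre-embedding the graph, computing $H^0(\Gamma_f,\mathcal{O}_{\Gamma_f})$, and inverting $p^*$, it solves the matching condition $p^*g = q^*f$ --- written bihomogeneously as $y_j^t g_i(x) = x_i^t f_j(y)$ in $(S_\Gamma)_{(t,t)}$ for a $t$ making $\Gamma$ $t$-regular with respect to $\mathcal{O}_\Gamma(1,1)$ --- directly for the unknown tuple $(g_i)$, thereby sidestepping the multigraded-versus-singly-graded \v{C}ech bookkeeping that you identify as the main obstacle.
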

\begin{proof}
    We retain the notation from \Cref{lem:ring_structure} and let $(f_i)_{i \in [n+1]}$ be the tuple representing a global section $f \in (S_Y)_0$. Fix an index $i$ such that the coordinate $x_i$ does not vanish at the identity point of $Y$. The counit $\epsilon(f)$ is computed by evaluating the local section $f_i/x_i^t$ at the identity point.

    Next, we compute the antipode $g = \imath(f)$, represented by a tuple $(g_i)_{i \in [n+1]}$. Let $\Gamma \subset Y \times Y$ be the graph of the morphism $Y \to Y$ representing the inverse operation of the group scheme $Y$. We assume that $t$ is chosen such that $\Gamma$ is $t$-regular with respect to $\mathcal{O}_{\Gamma}(1,1)$; such a $t$ can be computed via the Gotzmann number of the image of $\Gamma$ under the Segre embedding. For clarity, we employ coordinates $x = (x_0, \dots, x_n)$ and $y = (y_0, \dots, y_n)$ for the two factors of the ambient space $\mathbb{P}^n \times \mathbb{P}^n$ of $\Gamma$. Then the condition $g = \imath(f)$ is equivalent to
    \[
        \frac{g_i(x)}{x_i^t} = \frac{f_j(y)}{y_j^t} \text{ for all } i, j \in [n+1].
    \]
    This condition translates to the equation
    \[
        y_j^t g_i(x) = x_i^t f_j(y) \text{ in } (S_\Gamma)_{(t,t)}.
    \]
    We can now use elementary linear algebra to solve for the unique coefficients of the tuple $(g_i)_{i \in [n+1]}$ satisfying the above equations.
    
    The comultiplication is computed analogously by considering the graph of the multiplication morphism in $Y \times Y \times Y$.
\end{proof}

\begin{corollary}\label{cor:NS_tor}
    There exists an algorithm to compute the torsion subgroup scheme $(\NNS X)_\tor$ of the N\'eron-Severi group scheme.
\end{corollary}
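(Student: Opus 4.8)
The plan is to realize $(\NNS X)_\tor$ as the spectrum of the Hopf algebra of global sections of $\Pic^\tau X$, and to compute that Hopf algebra directly with the machinery of \Cref{lem:hopf_structure}. First I would compute $\Pic^\tau X$ as an explicit closed subscheme of a projective space via \Cref{thm:Pic_tau}, together with the graphs of its addition, inverse, and identity morphisms $\alpha$, $\iota$, $\epsilon$ via \Cref{thm:group_structure}. This exhibits $\Pic^\tau X$ as an explicitly given projective group scheme over $k$, which is exactly the input required by \Cref{lem:hopf_structure}.

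Next I would invoke the identification
\[
    (\NNS X)_\tor \cong \Spec H^0(\Pic^\tau X, \mathcal{O}_{\Pic^\tau X})
\]
recorded just above, which follows from \cite[Theorem~3.2.1 and Lemma~3.3.7]{Brion} and shows in particular that the right-hand side is a finite group scheme. The point to keep in mind is that this is an isomorphism of group schemes. Indeed, the canonical morphism $\Pic^\tau X \to \Spec H^0(\Pic^\tau X, \mathcal{O}_{\Pic^\tau X})$ is a homomorphism by functoriality, its kernel is the subgroup $(\Pic^0 X)_\red$ of \Cref{def:NS_tor}, and the Hopf algebra structure on $H^0(\Pic^\tau X, \mathcal{O}_{\Pic^\tau X})$ is the one obtained by pulling back the group operations along $\alpha$, $\iota$, and $\epsilon$. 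Here I would use the K\"unneth identity $H^0(\Pic^\tau X \times \Pic^\tau X, \mathcal{O}) \cong (S_{\Pic^\tau X})_0 \otimes_k (S_{\Pic^\tau X})_0$, valid because $\Pic^\tau X$ is proper over $k$, so that the pulled-back comultiplication lands in the expected tensor product.

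With this in hand the computation is immediate: applying \Cref{lem:hopf_structure} to $Y = \Pic^\tau X$ produces the finite-dimensional Hopf algebra $(S_{\Pic^\tau X})_0 = \Gamma(\Pic^\tau X, \mathcal{O}_{\Pic^\tau X})$, represented by \v{C}ech cocycles, together with its counit, antipode, and comultiplication relative to an explicit basis, the underlying ring structure coming from \Cref{lem:ring_structure}. By the identification above, this Hopf algebra is precisely the coordinate ring of $(\NNS X)_\tor$, so we recover the finite group scheme $(\NNS X)_\tor$ together with its group scheme structure.

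The genuinely routine part is the linear algebra internal to \Cref{lem:ring_structure} and \Cref{lem:hopf_structure}. The one point requiring care, and the step I would not skip in a full write-up, is the compatibility asserted in the second paragraph: that the Hopf algebra structure computed on $\Gamma(\Pic^\tau X, \mathcal{O})$ by pulling back $\alpha$, $\iota$, $\epsilon$ agrees with the intrinsic group scheme structure on $(\NNS X)_\tor$ arising from the quotient in \Cref{def:NS_tor}. This follows from the functoriality of the Stein factorization together with Brion's theorem, and it is what makes the output of \Cref{lem:hopf_structure} compute the correct group scheme rather than merely an abstract Hopf algebra.
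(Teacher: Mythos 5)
Your proposal is correct and follows essentially the same route as the paper: the paper's proof of \Cref{cor:NS_tor} consists precisely of invoking \Cref{lem:hopf_structure} applied to the explicitly computed group scheme $\Pic^\tau X$ together with Brion's isomorphism $(\NNS X)_\tor \cong \Spec H^0(\Pic^\tau X, \mathcal{O}_{\Pic^\tau X})$. The compatibility of the pulled-back Hopf structure with the quotient group structure of \Cref{def:NS_tor}, and the K\"unneth identification you flag, are left implicit in the paper but are correct and harmless to spell out.
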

\begin{proof}
    This follows from \Cref{lem:hopf_structure} together with the isomorphism
    \[(\NNS X)_\tor \cong \Spec H^0(\Pic^\tau X, \mathcal{O}_{\Pic^\tau X}). \qedhere \]
\end{proof}

\begin{remark}
    Just as $\Pic^\tau X$ corresponds to the dual of the first homology, the Albanese variety $\Alb X$ corresponds to the dual of the first cohomology. Furthermore, regarding the torsion parts, let $\pi^N_1(X, x)$ denote Nori's fundamental group scheme \cite{Nori} for a rational point $x \in X(k)$. By \cite[Theorem~6.4]{Kwe2}, we have the Cartier duality
    \[ \pi^N_1(X, x)^\ab_\tor \cong (\NNS X)_\tor^\dual.\]
    This isomorphism reflects the topological correspondence wherein $\pi^N_1(X, x)^\ab_\tor$ and $(\NNS X)_\tor$ correspond to the torsion parts of the first homology and the second cohomology, respectively.
\end{remark}

Before proceeding to the next subsection, we establish the following lemma. Here, a finite group scheme is described by a finite-dimensional Hopf algebra whose structure is explicitly determined in terms of a basis.

\begin{lemma}\label{hyp:group_at_bar_k}
    Given an explicit finite group scheme $G$ over $k$, there exists an algorithm to compute the $\Aut(\bar{k}/k)$-module structure on $G(\bar{k})$. To be more precise, the algorithm determines
    \begin{enumerate}
        \item the minimal finite extension $L$ of $k$ satisfying $G(\bar{k}) = G(L)$,
        \item the left group action $\Aut(L/k) \times G(L) \to G(L)$, and
        \item the abstract group structure of $G(L)$.
    \end{enumerate}
\end{lemma}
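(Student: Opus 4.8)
The group scheme $G$ is presented as $G = \Spec A$, where $A$ is a finite-dimensional $k$-Hopf algebra whose multiplication, unit, comultiplication $\mu$, counit $\epsilon$, and antipode $\imath$ are all recorded in terms of an explicit $k$-basis, exactly as produced by \Cref{lem:hopf_structure}. The plan is to realize the finite set $G(\bar k) = \Hom_{k\text{-alg}}(A, \bar k)$ explicitly, together with its group law and its $\Aut(\bar k/k)$-action. Note that $G(\bar k)$ is finite, with at most $\dim_k A$ elements.

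First I would pass to the reduced algebra. Any $k$-algebra map to the field $\bar k$ kills nilpotents, so $G(\bar k) = \Hom_{k\text{-alg}}(A_{\red}, \bar k)$ with $A_{\red} = A/\mathrm{nil}(A)$, and the nilradical is computable by the cited radical algorithm. The ring $A_{\red}$ is Artinian and reduced, hence isomorphic to a finite product $\prod_i \kappa_i$ of finite field extensions $\kappa_i/k$. Computing the minimal primes $\mathfrak{p}_i$ of $A$ by primary decomposition and setting $\kappa_i = A/\mathfrak{p}_i$ yields explicit presentations of these fields. A point $\phi \in G(\bar k)$ factors through exactly one factor $\kappa_i$ and is thus the same datum as a $k$-embedding $\kappa_i \hookrightarrow \bar k$; the number of such embeddings is the separable degree $[\kappa_i:k]_s$, and their images are precisely the $k$-conjugates of $\kappa_i$ inside $\bar k$.

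Next I would build the field $L$. The minimal extension with $G(\bar k) = G(L)$ is the subfield of $\bar k$ generated by the images of all points, i.e. the compositum of the normal closures of the $\kappa_i$; in particular it is finite and normal over $k$, which is exactly what is needed so that $\sigma(L) = L$ and hence the $\Aut(\bar k/k)$-action descends. Concretely I would construct $L$ as a splitting field: starting from $\prod_i \kappa_i$, repeatedly factor, over the field built so far, the minimal polynomials of a generating set and adjoin the missing roots, using univariate factorization over $k$ and over its finite extensions (which are again finitely generated fields of the admissible form, so the factorization algorithms apply). This terminates since everything is finite-dimensional. Having $L$, I enumerate $\Aut(L/k)$: because $L/k$ is normal, every $k$-embedding $L \to \bar k$ lands in $L$ and restricts to an automorphism, and these are read off by factoring, over $L$, the minimal polynomial of a separating generator of the separable closure $L^{\mathrm{sep}}$ of $k$ in $L$. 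The induced action on $G(L) = \Hom_{k\text{-alg}}(A, L)$ is post-composition $\sigma \cdot \phi = \sigma \circ \phi$, tabulated by applying each $\sigma$ to the $L$-coordinates of each point, giving item (2).

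Finally, the group structure on $G(L)$, giving item (3), is transported from the Hopf algebra: the product of $\phi, \psi \in \Hom_{k\text{-alg}}(A, L)$ corresponds to the composite $A \xrightarrow{\mu} A \otimes_k A \xrightarrow{\phi \otimes \psi} L \otimes_k L \to L$, the identity is the composite $A \xrightarrow{\epsilon} k \hookrightarrow L$, and the inverse of $\phi$ is $\phi \circ \imath$. Since $\mu$, $\epsilon$, and $\imath$ are explicit in the basis, all of these are linear-algebra computations in $L$, yielding the full multiplication table of the finite group $G(L)$. I expect the main obstacle to be the imperfect-field case $\charac k = p > 0$: there the primitive element theorem and the naive identification of embeddings with roots of an irreducible polynomial both fail, so the normal-closure construction and the computation of $\Aut(L/k)$ must be organized around the separable closure $L^{\mathrm{sep}}$, using the fact that purely inseparable elements are fixed by every $k$-automorphism so that $\Aut(L/k) = \Aut(L^{\mathrm{sep}}/k)$ acts only through the separable part.
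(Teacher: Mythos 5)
Your proposal is correct, but it follows a genuinely different route from the paper's proof, which works throughout with the affine embedding $G \hookrightarrow \mathbb{A}^n$ obtained by presenting the Hopf algebra $A$ as a quotient of a polynomial ring, and then runs a single Gr\"obner-basis subroutine three times: lexicographic elimination with triangular back-substitution to enumerate $G(L)$ and build $L$ as a splitting field; a brute-force enumeration of assignments of field generators to conjugate roots, keeping those that define $k$-algebra morphisms, to list $\Aut(L/k)$; and a second round of point enumeration applied to the graph $\Gamma \subset \mathbb{A}^n \times \mathbb{A}^n \times \mathbb{A}^n$ of the multiplication morphism to read off the multiplication table. You instead decompose $A_{\red}$ into residue fields via primary decomposition, identify points with $k$-embeddings $\kappa_i \hookrightarrow \bar{k}$, realize $L$ as the compositum of normal closures, and transport the group law by Hopf-algebra convolution $(\phi \otimes \psi) \circ \mu$, $\phi \circ \imath$, which is pure linear algebra over $L$. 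Your approach buys two things: the minimality of $L$ in item (1) is transparent (it is visibly the compositum of the images of all points, i.e.\ of normal closures), and the group law requires no further equation solving. The paper's approach buys uniformity and robustness over imperfect fields: it never needs the separable/inseparable analysis that occupies your last paragraph, because candidate automorphisms that fail to be well defined are simply discarded by the consistency check, and the purely inseparable part never has to be isolated. One small caution about your version: the unique extension of an automorphism from $L^{\mathrm{sep}}$ to $L$ still has to be made effective (computing $p^n$-th roots in $L$ of the images of the inseparable generators), a step you allude to but do not spell out; it is routine linear algebra, so this is a presentational gap rather than a mathematical one.
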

\begin{proof}
    Let $G = \Spec A$ with a basis $\{x_0, \dots, x_{n-1}\}$ of $A$. Expressing the products $x_i x_j$ in terms of this basis, we present $A$ as a quotient of $k[x_0, \dots, x_{n-1}]$, which induces a closed embedding $G \hookrightarrow \mathbb{A}^n$. Let $I_G$ be the ideal defining $G$.
    
    First, we determine the minimal field $L$ and the set $G(L)$. We compute a Gr\"obner basis of $I_G$ with respect to the lexicographic order to find the common zeros. Since $I_G$ is zero-dimensional, under a lex order with $x_{n-1} \succ \cdots \succ x_0$, a Gr\"obner basis contains a univariate polynomial in $x_0$. We find all its roots; if the polynomial does not split into linear factors, we extend the base field to contain these roots. Substituting each root into $x_0$, we repeat the process of computing the Gr\"obner basis and solving for the subsequent variables. Recursively applying this procedure yields the splitting field $L$ and the set of solutions $G(L) \subset \mathbb{A}^n(L)$.
    
    To compute $\Aut(L/k)$, we consider a basis of $L$ over $k$. We compute the minimal polynomials for these basis elements and consider all assignments mapping the generators to their conjugates. There are finitely many such assignments; we collect those that induce well-defined $k$-algebra morphisms, which form the group $\Aut(L/k)$. The action $\Aut(L/k) \times G(L) \to G(L)$ is then computed by applying each automorphism component-wise to the points in $G(L)$.
    
    Finally, we compute the group structure on $G(L)$. The identity element is obtained directly from the section $\epsilon \colon \Spec k \to G$ expressed in coordinates of $\mathbb{A}^n(k)$. From the comultiplication $\mu \colon A \to A \otimes A$, we derive the equations defining the graph $\Gamma \subset \mathbb{A}^n \times \mathbb{A}^n \times \mathbb{A}^n$ of the multiplication morphism. Applying the same Gr\"obner basis method as above, we compute the set of points $\Gamma(L) \subset G(L) \times G(L) \times G(L)$, which provides the multiplication table for $G(L)$.
\end{proof}

\subsection{Computing Homological Objects and Cohomology}

We are now ready to compute the homological objects. In the remainder of this section, let $x$ be a geometric point of $X$, and let $\ell$ be a prime number distinct from $\charac k$. If the characteristic of $k$ is positive, we denote it by $p$; otherwise, we disregard everything related to $p$.

Under these hypotheses, we first address the computation of the \'etale fundamental group. While the full \'etale fundamental group $\pi^\et_1(X, x)$ is generally highly complex, its abelianization is tractable.

\begin{proposition}\label{prop:fund_group_structure}
    Let $g$ be the dimension of $\Alb X$. If $\charac k = p > 0$, let $\sigma$ denote the $p$-rank of $\Alb X$. Then
    \[
        \pi^\et_1(X_{\bar{k}}, x)^{\ab} \cong 
        \begin{cases}
            \widehat{\mathbb{Z}}^{2g} \times (\NNS X)_\tor^\dual(\bar{k}) & \text{if } \charac k = 0, \\[1em]
            \displaystyle \Bigg( \prod_{\substack{\ell \neq p \\ \text{prime}}} \mathbb{Z}_\ell^{2g} \Bigg) \times \mathbb{Z}_p^\sigma \times (\NNS X)_\tor^\dual(\bar{k}) & \text{if } \charac k = p > 0.
        \end{cases}
    \]
\end{proposition}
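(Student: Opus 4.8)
The plan is to reduce the computation to first étale cohomology and then read everything off the extension structure of $\Pic^\tau X$. Since $\pi^\et_1(X_{\bar k}, x)^{\ab}$ is a profinite abelian group, it is determined by its finite quotients, and since $H^1_\et(X_{\bar k}, \mathbb{Z}/n\mathbb{Z}) = \Hom(\pi^\et_1(X_{\bar k},x)^{\ab}, \mathbb{Z}/n\mathbb{Z})$ is the Pontryagin dual of the finite group $\pi^\et_1(X_{\bar k},x)^{\ab}/n$, one has $\pi^\et_1(X_{\bar k},x)^{\ab}/n \cong \Hom(H^1_\et(X_{\bar k}, \mathbb{Z}/n\mathbb{Z}), \mathbb{Q}/\mathbb{Z})$. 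So I would first decompose into $\ell$-primary parts, compute each $H^1_\et(X_{\bar k}, \mathbb{Z}/\ell^m\mathbb{Z})$ compatibly in $m$, then dualize and pass to the inverse limit.

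The key identification I would establish is
\[
H^1_\et(X_{\bar k}, \mathbb{Z}/n\mathbb{Z}) \cong \Hom_{\bar k\text{-gp}}(\mmu_n, \Pic^\tau X),
\]
homomorphisms of group schemes over $\bar k$. For $\ell \neq p$ this is the Kummer sequence together with the (non-canonical) isomorphism $\mmu_{\ell^m} \cong \mathbb{Z}/\ell^m\mathbb{Z}$, recovering the familiar $H^1_\et(X_{\bar k}, \mathbb{Z}/\ell^m\mathbb{Z}) \cong \pic(X_{\bar k})[\ell^m] = (\Pic^\tau X)[\ell^m](\bar k)$. For $\ell = p$ the same formula holds at the level of flat cohomology via Cartier duality, but now $\Hom(\mmu_{p^m}, -)$ detects the multiplicative, possibly infinitesimal, part of $(\Pic^\tau X)[p^m]$, which is invisible to the reduced subscheme $(\Pic^\tau X)_\red$. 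This is exactly where the non-reduced structure of $\Pic^\tau X$, and hence Grothendieck's scheme rather than Matsusaka's variety, becomes indispensable, as advertised in the introduction.

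Next I would feed the extension $0 \to (\Alb X)^\dual \to \Pic^\tau X \to (\NNS X)_\tor \to 0$ of \Cref{def:NS_tor}, where $(\Alb X)^\dual = (\Pic^0 X)_\red$, into $\Hom(\mmu_n, -)$. The abelian-variety term $\Hom(\mmu_n, (\Alb X)^\dual)$ is free of rank $2g$ for $n = \ell^m$ with $\ell \neq p$, and free of rank $\sigma$ for $n = p^m$: indeed its rank is the multiplicative rank of $(\Alb X)^\dual[p^m]$, which by Cartier duality and the equality of $p$-ranks of dual abelian varieties equals the $p$-rank $\sigma$ of $\Alb X$. Dualizing and passing to the limit, this term assembles into the torsion-free group $\widehat T(\Alb X) = \pi^\et_1(\Alb X)$, equal to $\widehat{\mathbb{Z}}^{2g}$ when $\charac k = 0$ and to $\prod_{\ell \neq p}\mathbb{Z}_\ell^{2g} \times \mathbb{Z}_p^\sigma$ when $\charac k = p$. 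The component-group term $\Hom(\mmu_n, (\NNS X)_\tor)$ stabilizes for $n \gg 0$ and dualizes, via Cartier duality, to the finite group $(\NNS X)_\tor^\dual(\bar k)$, its $p$-part arising precisely from the multiplicative (infinitesimal) part of $(\NNS X)_\tor$ coming from the non-reducedness of $\Pic^0 X$. Finally, the resulting extension of the torsion-free profinite group $\widehat T(\Alb X)$ by the finite group $(\NNS X)_\tor^\dual(\bar k)$ splits, since each $\mathbb{Z}_\ell$-free factor is projective, yielding the stated product.

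The hard part will be the $p$-primary analysis. Unlike the prime-to-$p$ case, $H^1_\et(X_{\bar k}, \mathbb{Z}/p^m\mathbb{Z})$ is \emph{not} $\pic(X_{\bar k})[p^m]$; establishing $H^1_\et(X_{\bar k}, \mathbb{Z}/p^m\mathbb{Z}) \cong \Hom(\mmu_{p^m}, \Pic^\tau X)$ requires flat-cohomology duality rather than the naive Kummer argument, and one must control the connecting map into $\mathrm{Ext}^1(\mmu_{p^m}, (\Alb X)^\dual)$ in the long exact sequence attached to \Cref{def:NS_tor}. One must also track the étale--multiplicative swap under Cartier duality carefully, so as to extract exactly the $p$-rank $\sigma$ and exactly $(\NNS X)_\tor^\dual(\bar k)$, rather than the incorrect Pontryagin dual of $(\NNS X)_\tor(\bar k)$, which would see only $\bar k$-points and ignore the infinitesimal part. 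As an independent check, over $\mathbb{C}$ \Cref{prop:pic_tau_C} identifies $(\NNS X)_\tor^\dual(\bar k)$ with $H_1(X, \mathbb{Z})_\tor$ and $\widehat T(\Alb X)$ with $\widehat{\mathbb{Z}}^{2g}$, recovering the classical $\pi^\et_1(X_{\bar k},x)^{\ab} \cong \widehat{H_1(X,\mathbb{Z})}$.
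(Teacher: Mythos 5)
Your route is genuinely different from the paper's, and for the most part it is sound. The paper's own proof is very short: it quotes Stix's Proposition~69 \cite{Stix} for the exact sequence $0 \to (\NNS X)_\tor^\dual(\bar{k}) \to \pi^\et_1(X_{\bar{k}}, x)^{\ab} \to \pi^\et_1(\Alb X_{\bar{k}}, 0) \to 0$, identifies $\pi^\et_1(\Alb X_{\bar{k}}, 0)$ with the full Tate module $\varprojlim_n (\Alb X_{\bar{k}})[n](\bar{k})$ (which has the stated shape $\widehat{\mathbb{Z}}^{2g}$, resp. $\prod_{\ell \neq p}\mathbb{Z}_\ell^{2g} \times \mathbb{Z}_p^\sigma$), and splits the sequence by projectivity of torsion-free profinite abelian groups --- the same splitting argument you give. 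You instead attempt to \emph{rederive} that exact sequence from flat duality (\Cref{prop:fppf_duality}) applied to the extension of \Cref{def:NS_tor}. Your bookkeeping is correct where you carry it out: $\Hom(\mmu_{\ell^m}, (\Alb X)^\dual)$ is free of rank $2g$ over $\mathbb{Z}/\ell^m\mathbb{Z}$ for $\ell \neq p$ and of rank $\sigma$ for $\ell = p$ (the multiplicative rank of $(\Alb X)^\dual[p^m]$ equals the $p$-rank of its dual $\Alb X$), and the stable finite term is indeed $(\NNS X)_\tor^\dual(\bar{k})$ rather than the Pontryagin dual of $(\NNS X)_\tor(\bar k)$.

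The genuine gap is exactly the step you flag and then leave open. Writing $A = (\Pic^0 X)_\red$ and $N = (\NNS X)_\tor$, you need the map $\Hom(\mmu_n, \Pic^\tau X_{\bar{k}}) \to \Hom(\mmu_n, N)$ to be surjective for sufficiently divisible $n$, compatibly with the limit; equivalently, the connecting map into $\mathrm{Ext}^1(\mmu_n, A)$ must vanish. Without this, your argument only exhibits the finite part of $\pi^\et_1(X_{\bar{k}},x)^{\ab}$ as a \emph{subgroup} of $(\NNS X)_\tor^\dual(\bar{k})$, possibly proper, and the missing $p$-part is precisely the invariant the proposition is after. Moreover the required vanishing of $\mathrm{Ext}^1(\mmu_{p^m}, A)$ over $\bar{k}$ is true but is not formal: one only gets for free that it is the quotient of $\mathrm{Ext}^1(\mmu_{p^m}, A[p^m])$ by the image of the connecting map from $\Hom(\mmu_{p^m}, A)$, and $\mathrm{Ext}^1(\mmu_{p^m}, A[p^m]) \neq 0$ whenever $\sigma > 0$ (it contains $\mathrm{Ext}^1(\mmu_{p^m}, \mmu_{p^m})$), so an actual computation or a duality argument is needed. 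This non-formal exactness is precisely what the citation of Stix supplies in the paper. A clean repair inside your framework: pass to $n$-torsion first. The sequence $0 \to A[n] \to (\Pic^\tau X_{\bar{k}})[n] \to N[n] \to 0$ is exact as fppf sheaves by the snake lemma, because $[n]\colon A \to A$ is an fppf epimorphism; since $\Hom(\mmu_n, G) = \Hom(\mmu_n, G[n])$, Cartier duality turns the desired surjectivity into restriction of homomorphisms into the constant group $\mathbb{Z}/n\mathbb{Z}$ along an injection of finite abelian $n$-torsion groups (homomorphisms to a constant group over $\bar{k}$ factor through $\bar{k}$-points of the étale quotient), and this restriction is surjective because $\mathbb{Z}/n\mathbb{Z}$ is self-injective. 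With that inserted, and Mittag--Leffler exactness of the inverse limit over finite groups, your proof closes up.
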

\begin{proof}
    According to \cite[Proposition~69]{Stix}, there exists an exact sequence
    \[
        0 \to (\NNS X)_\tor^\dual(\bar{k}) \to \pi^\et_1(X_{\bar{k}}, x)^{\ab} \to \pi^\et_1(\Alb X_{\bar{k}}, 0) \to 0.
    \]
    Because $\Alb X_{\bar{k}}$ is an abelian variety, 
    \[\pi^\et_1(\Alb X_{\bar{k}}, 0) \cong \varprojlim_{n} (\Alb X_{\bar{k}})[n](\bar{k}).\]
    Since this Tate module is a projective profinite abelian group, the exact sequence splits, and the result follows.
\end{proof}

Let $Y$ be a projective group scheme. The multiplication-by-$n$ morphism $[n] \colon Y \to Y$ can be explicitly computed by iteratively composing the diagonal morphism $\Delta \colon Y \to Y \times Y$ and the multiplication morphism $\mu \colon Y \times Y \to Y$. Consequently, the $n$-torsion subgroup scheme $Y[n]$ is computable as the scheme-theoretic fiber of the identity section $\epsilon \colon \Spec k \to Y$ under the morphism $[n]$.

\firstHomology
\begin{proof}
    We compute $\Alb X$ via \Cref{prop:Alb} and determine its dimension $g$ from its Hilbert polynomial. We then compute the kernel $(\Alb X)[p]$ of the multiplication-by-$p$ map, apply \Cref{hyp:group_at_bar_k} to find the finite group $(\Alb X)[p](\bar{k})$, and determine its order $p^\sigma$. Subsequently, we compute $(\NNS X)_\tor$ using \Cref{cor:NS_tor}, construct its Cartier dual $(\NNS X)_\tor^\dual$ by taking the dual vector space of the coordinate ring, and apply \Cref{hyp:group_at_bar_k} to obtain $(\NNS X)_\tor^\dual(\bar{k})$. The result now follows from the isomorphism in \Cref{prop:fund_group_structure}.
\end{proof}

Although the structure of $\pi^\et_1(X, x)$ is determined by \Cref{thm:first_homology}, it is essentially an infinite object. Consequently, providing a finite description of the continuous $\Aut(\bar{k}/k)$-action is generally infeasible. In what follows, we focus on computing finite objects equipped with explicit Galois actions. Thus, we introduce the following results.

\begin{proposition}\label{prop:ab_fund_group_dual}
    There is a canonical isomorphism of $\Aut(\bar{k}/k)$-modules
    \[
        \frac{\pi^\et_1(X_{\bar{k}}, x)^{\ab}}{n\pi^\et_1(X_{\bar{k}}, x)^{\ab}} \cong (\Pic^\tau X_{\bar{k}})[n]^\dual(\bar{k}),
    \]
    where $(\cdot)^\dual$ denotes the Cartier dual.
\end{proposition}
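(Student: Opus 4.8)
The plan is to establish the isomorphism by reducing two exact sequences modulo $n$ and matching them term by term; every ingredient will be functorial in $\bar{k}$, so Galois-equivariance will come for free. Throughout write $A \coloneqq \Alb X$, so that $(\Pic^0 X)_\red = A^\dual$ by biduality of abelian varieties, and set $F \coloneqq (\NNS X)_\tor$; I work over $\bar{k}$, base-changing the sequence of \Cref{def:NS_tor}. On the fundamental-group side I would start from the exact sequence of \cite[Proposition~69]{Stix} already used in \Cref{prop:fund_group_structure},
\[
0 \to F^\dual(\bar{k}) \to \pi^\et_1(X_{\bar{k}}, x)^\ab \to \pi^\et_1(A_{\bar{k}}, 0) \to 0.
\]
Since $\pi^\et_1(A_{\bar{k}}, 0) \cong \varprojlim_m A[m](\bar{k})$ is a product of Tate modules and hence torsion-free, the snake lemma applied to multiplication by $n$ (whose kernel vanishes on the torsion-free quotient) yields
\[
0 \to F^\dual(\bar{k})/n \to \pi^\et_1(X_{\bar{k}}, x)^\ab/n \to A[n](\bar{k}) \to 0,
\]
where I use $\pi^\et_1(A_{\bar{k}}, 0)/n \cong A[n](\bar{k})$, valid for every $n$ including the $p$-part because $T_p A \cong \mathbb{Z}_p^\sigma$.

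On the Picard side I would apply the snake lemma to multiplication by $n$ on the structural sequence $0 \to A^\dual \to \Pic^\tau X \to F \to 0$. As $n\colon A^\dual \to A^\dual$ is finite faithfully flat, $A^\dual/n = 0$ as an fppf sheaf, so one obtains a short exact sequence of finite commutative group schemes
\[
0 \to A^\dual[n] \to (\Pic^\tau X)[n] \to F[n] \to 0.
\]
Cartier duality is exact and contravariant, so dualizing reverses the arrows, and then the functor $(-)(\bar{k})$ is exact on the result: over the algebraically closed field $\bar{k}$ one has $H^1_{\mathrm{fppf}}(\bar{k}, G) = 0$ for every finite commutative group scheme $G$, since a torsor is a nonempty finite $\bar{k}$-scheme and therefore carries a $\bar{k}$-point. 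This produces
\[
0 \to F[n]^\dual(\bar{k}) \to \left((\Pic^\tau X)[n]\right)^\dual(\bar{k}) \to \left(A^\dual[n]\right)^\dual(\bar{k}) \to 0.
\]

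It then remains to match the outer terms. The Weil pairing supplies a canonical isomorphism $\left(A^\dual[n]\right)^\dual \cong A[n]$, so the right-hand terms agree as $A[n](\bar{k})$. For the left-hand terms, finiteness of $F$ together with exactness of Cartier duality gives a canonical isomorphism $F[n]^\dual \cong F^\dual/nF^\dual$ of group schemes — multiplication by $n$ dualizes to multiplication by $n$, so the kernel on $F$ dualizes to the cokernel on $F^\dual$ — and taking $\bar{k}$-points (again exact) identifies $F[n]^\dual(\bar{k}) \cong F^\dual(\bar{k})/n$. Hence both exact sequences have identical outer terms $F^\dual(\bar{k})/n$ and $A[n](\bar{k})$.

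The remaining, and I expect hardest, step is to produce a single canonical comparison map between the two sequences rather than merely matching their associated graded pieces. The natural candidate is induced by the Albanese morphism $a\colon X \to A$: the map $a_*$ realizes the surjection $\pi^\et_1(X_{\bar{k}},x)^\ab/n \twoheadrightarrow A[n](\bar{k})$, while the pullback $a^\ast\colon A^\dual \xrightarrow{\sim} (\Pic^0 X)_\red \hookrightarrow \Pic^\tau X$ Cartier-dualizes on $n$-torsion to the surjection $\left((\Pic^\tau X)[n]\right)^\dual(\bar{k}) \twoheadrightarrow A[n](\bar{k})$. The abelian-variety case — where the tautological identification $\pi^\et_1(A_{\bar{k}},0)/n = A[n](\bar{k})$ is matched with $\left(A^\dual[n]\right)^\dual(\bar{k})$ precisely by the Weil pairing — furnishes the canonical isomorphism on the quotient, and the pullback on $F = (\NNS X)_\tor$ should induce the identification on the sub. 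One then verifies that these fit into a commutative ladder and invokes the five lemma. I expect the crux to be checking that the Weil-pairing identification on the abelian quotient and the Cartier-dual identification on the finite part are induced by one and the same natural map, and that this is compatible uniformly in the $p$-part; the non-reduced structure of $(\Pic^\tau X)[n]$ is exactly what makes the $\bar{k}$-point computation delicate, though the vanishing $H^1_{\mathrm{fppf}}(\bar{k},-)=0$ controls it. Since Stix's sequence, the Albanese morphism, the Weil pairing, and Cartier duality are all functorial in $\bar{k}$, the resulting isomorphism is automatically $\Aut(\bar{k}/k)$-equivariant, giving the statement as an isomorphism of Galois modules.
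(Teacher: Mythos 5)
Your preparatory reductions are individually sound: the snake-lemma computation on Stix's sequence, the short exact sequence $0 \to A^\dual[n] \to (\Pic^\tau X)[n] \to F[n] \to 0$, the exactness of Cartier duality, the vanishing $H^1_{\mathrm{fppf}}(\bar{k}, G) = 0$ and hence exactness of $G \mapsto G(\bar{k})$, the identification $F[n]^\dual \cong F^\dual/nF^\dual$, and the Weil-pairing identification $\left(A^\dual[n]\right)^\dual \cong A[n]$ are all correct. But the proof has a genuine gap exactly where you flag it, and it is not a routine verification: you never construct a map between the middle terms. The Albanese morphism supplies the two surjections onto the common quotient $A[n](\bar{k})$ and the two inclusions of the common sub $F^\dual(\bar{k})/n$ --- that is, the outer vertical arrows of the ladder --- but it does not produce a homomorphism $\pi^\et_1(X_{\bar{k}},x)^{\ab}/n \to (\Pic^\tau X_{\bar{k}})[n]^\dual(\bar{k})$. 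Without that middle arrow the five lemma cannot be invoked, and matching the outer terms of two short exact sequences does not determine the middle term: $\mathbb{Z}/p^2$ and $\mathbb{Z}/p \times \mathbb{Z}/p$ are both extensions of $\mathbb{Z}/p$ by $\mathbb{Z}/p$. The canonical comparison map is precisely the content of the proposition, so as written the proposal assumes what is to be proved.

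For contrast, the paper avoids the ladder entirely: it quotes Antei's theorem, which gives a canonical isomorphism of affine group schemes $\pi^N_1(X_{\bar{k}}, x)^{\ab} \cong \varprojlim_n (\Pic^\tau X_{\bar{k}})[n]^\dual$, then reduces modulo $n$ and applies the exact functor $G \mapsto G(\bar{k})$; the entire duality is imported at the group-scheme level in one step. If you want to complete your argument without Nori's fundamental group scheme, the missing middle map can be produced from the fppf duality recorded in \Cref{prop:fppf_duality}: one has $\Hom(\pi^\et_1(X_{\bar{k}},x), \mathbb{Z}/n\mathbb{Z}) \cong H^1_{\et}(X_{\bar{k}}, \mathbb{Z}/n\mathbb{Z}) \cong \Hom_{\bar{k}\text{-}\mathrm{Grp}}(\mmu_n, \Pic X_{\bar{k}}) \cong \Hom_{\mathrm{Grp}}\bigl((\Pic^\tau X_{\bar{k}})[n]^\dual(\bar{k}), \mathbb{Z}/n\mathbb{Z}\bigr)$, and Pontryagin double duality for finite abelian groups killed by $n$ then yields the isomorphism directly --- at which point your two exact sequences are no longer needed at all.
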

\begin{proof}
    By \cite[Proposition~3.4]{Antei},\footnote{Although Antei assumes at the beginning of Subsection~3.1 that $\Pic^0 X$ is an abelian scheme, the proof of this result does not rely on its smoothness.} we have a canonical isomorphism of affine group schemes
    \[
       \pi^N_1(X_{\bar{k}}, x)^{\ab} \cong \varprojlim_n (\Pic^\tau X_{\bar{k}})[n]^\dual,
    \]
    which is compatible with the $\Aut(\bar{k}/k)$-action. Consequently, we obtain an isomorphism of finite group schemes
     \[ \frac{\pi^N_1(X_{\bar{k}}, x)^{\ab}}{n\pi^N_1(X_{\bar{k}}, x)^{\ab}} \cong (\Pic^\tau X_{\bar{k}})[n]^\dual.\]
    The result follows from applying the exact functor $G \mapsto G(\bar{k})$.
\end{proof}

For two group schemes $G$ and $H$ over $k$, we denote by $\Hom_{k\text{-}\mathrm{Grp}}(G, H)$ the group of homomorphisms of $k$-group schemes.

\begin{proposition}\label{prop:fppf_duality}
    Let $G$ be a finite commutative group scheme over $k$. Then there is a canonical isomorphism of $\Aut(\bar{k}/k)$-modules
    \[
        H^1_{\mathrm{fppf}}(X, G) \cong \Hom_{k\text{-}\mathrm{Grp}}(G^\dual, \Pic_{X/k}),
    \]
    where $(\cdot)^\dual$ denotes the Cartier dual.
\end{proposition}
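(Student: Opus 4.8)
The plan is to realize the claimed identification as the global shadow of a sheaf-level isomorphism on the small fppf site of $\Spec k$, built from Cartier duality together with the adjunction between $f^*$ and $Rf_*$ for the structure morphism $f \colon X \to \Spec k$. First I would construct the canonical map directly on torsors, so that its naturality, and hence its $\Aut(\bar k/k)$-equivariance, is manifest. Given a class in $H^1_{\mathrm{fppf}}(X, G)$ represented by a $G$-torsor $P \to X$ and a $T$-point $\chi \in G^\dual(T) = \Hom_T(G_T, \mathbb{G}_{m,T})$, I push $P_T$ out along the character $\chi$ to obtain a $\mathbb{G}_m$-torsor on $X_T$, that is, a line bundle whose class lies in $\Pic_{X/k}(T)$. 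Bilinearity of the pushout in the torsor and in the character shows that $\chi \mapsto [\text{pushout}]$ is a homomorphism $G^\dual \to \Pic_{X/k}$, yielding a natural transformation $H^1_{\mathrm{fppf}}(X, -) \to \Hom_{k\text{-}\mathrm{Grp}}((-)^\dual, \Pic_{X/k})$ of covariant functors in $G$. Since the recipe commutes with base change of the ground field, it is automatically compatible with $\Aut(\bar k/k)$.

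To prove bijectivity I would pass to the derived category of fppf abelian sheaves and compute $Rf_* G$ in two ways. Writing $G = \underline{\Hom}_k(G^\dual, \mathbb{G}_m)$ by Cartier duality, and invoking the vanishing $\underline{\operatorname{Ext}}^i_X(f^*G^\dual, \mathbb{G}_m) = 0$ for $i \ge 1$ valid for any finite flat commutative group scheme, one identifies $R\underline{\Hom}_X(f^*G^\dual, \mathbb{G}_m)$ with $f^*G$ concentrated in degree $0$. The internalized $(f^*, Rf_*)$-adjunction then gives
\[
 Rf_* G \;\cong\; Rf_*\, R\underline{\Hom}_X(f^*G^\dual, \mathbb{G}_m) \;\cong\; R\underline{\Hom}_k(G^\dual, Rf_* \mathbb{G}_m).
\]
Next I would read off cohomology sheaves from the hyper-Ext spectral sequence $\underline{\operatorname{Ext}}^p_k(G^\dual, R^q f_* \mathbb{G}_m) \Rightarrow R^{p+q} f_* G$, using $R^0 f_* \mathbb{G}_m = \mathbb{G}_m$ (as $X$ is proper and geometrically connected, its global units are scalars) and $R^1 f_* \mathbb{G}_m = \Pic_{X/k}$, which is the defining property of the Picard scheme as the fppf-sheafified relative Picard functor. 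In total degree $1$ the only surviving term is $E_2^{0,1} = \underline{\Hom}_k(G^\dual, \Pic_{X/k})$: the term $E_2^{1,0} = \underline{\operatorname{Ext}}^1_k(G^\dual, \mathbb{G}_m)$ vanishes, and the differential $d_2 \colon E_2^{0,1} \to E_2^{2,0} = \underline{\operatorname{Ext}}^2_k(G^\dual, \mathbb{G}_m)$ lands in a zero group. This produces the sheaf isomorphism $R^1 f_* G \cong \underline{\Hom}_k(G^\dual, \Pic_{X/k})$, which one verifies coincides with the torsor map of the first paragraph via the edge homomorphism of the spectral sequence.

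Finally I would descend from sheaves to the stated groups. Taking sections of the sheaf isomorphism identifies $H^0(\Spec k, R^1 f_* G)$ with $\Hom_{k\text{-}\mathrm{Grp}}(G^\dual, \Pic_{X/k})$, and the Leray spectral sequence for $f$ relates this to $H^1_{\mathrm{fppf}}(X, G)$ through the obstruction term $H^1_{\mathrm{fppf}}(\Spec k, f_* G) = H^1_{\mathrm{fppf}}(\Spec k, G)$. After base change to $\bar k$ this obstruction vanishes, since every finite commutative group scheme torsor over an algebraically closed field is trivial, so the edge map becomes the asserted isomorphism, and it remains $\Aut(\bar k/k)$-equivariant by functoriality in the base field.

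I expect the derived step to be the main obstacle: making the internal adjunction precise in the fppf topos and, above all, establishing the Ext-sheaf vanishing $\underline{\operatorname{Ext}}^{\ge 1}_k(G^\dual, \mathbb{G}_m) = 0$ in all relevant degrees. This is precisely the input encoding the perfectness of Cartier duality, and it is exactly the fact that it persists for non-\'etale $G^\dual$ in characteristic $p$ that lets the argument capture the $p$-power torsion invisible to the reduced Picard variety. A secondary delicate point is the careful bookkeeping in passing from the sheaf statement to the displayed group-level, Galois-equivariant isomorphism.
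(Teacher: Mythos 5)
The paper offers no argument of its own here: the proposition is quoted from \cite[Expos\'e~XI, Remarques~6.11]{SGA1}, where the correspondence is built by hand in both directions (pushout of a torsor along characters, and, conversely, construction of the torsor attached to a homomorphism $G^\dual \to \Pic_{X/k}$). Your route is instead the derived fppf-duality one, and its skeleton (pushout map, internal Cartier duality, $(f^*, Rf_*)$-adjunction, Leray) is the right shape; but the load-bearing step is exactly the one you leave open, and it cannot be waved through. The blanket vanishing $\underline{\mathrm{Ext}}^i_{\mathrm{fppf}}(G^\dual,\mathbb{G}_m)=0$ for \emph{all} $i\ge 1$ is not a quotable fact: in characteristic $p$ the higher $\underline{\mathrm{Ext}}$-sheaves against $\mathbb{G}_m$ are governed by Breen's computations and do not vanish in all positive degrees. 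What is true --- and is itself a deep theorem, proved via Raynaud's fppf-local embedding of a finite flat group scheme into an abelian scheme, the Barsotti--Weil formula, and Breen's vanishing theorem for abelian schemes --- is the vanishing in degree $1$; your identification of $Rf_*G$ should accordingly be truncated to degrees $\le 1$, which suffices for reading off $R^1f_*$. Worse, your spectral-sequence argument additionally needs $E_2^{2,0}=\underline{\mathrm{Ext}}^2_k(G^\dual,\mathbb{G}_m)=0$ to kill $d_2$, which is not available in general; surjectivity of $R^1f_*G \to \underline{\Hom}_k(G^\dual,\Pic_{X/k})$ has to come instead from the explicit construction of a torsor realizing a given homomorphism --- i.e.\ precisely the elementary SGA~1 argument your derived approach was meant to replace. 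So, as written, the central step is incomplete and partly false; it is repairable, but only by importing the hard inputs you hoped to defer.

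Separately, your endgame establishes the isomorphism only after base change to $\bar k$: you dispose of the Leray obstruction $H^1_{\mathrm{fppf}}(\Spec k, G)$ only there. This is not a removable blemish, because the proposition as literally stated over $k$ is false: for $X=\mathbb{P}^1_{\mathbb{Q}}$ and $G=\mathbb{Z}/2$, the group $H^1_{\mathrm{fppf}}(X,G)=H^1_{\mathrm{\acute{e}t}}(\mathbb{P}^1_{\mathbb{Q}},\mathbb{Z}/2)$ contains $H^1(\mathbb{Q},\mathbb{Z}/2)=\mathbb{Q}^\times/(\mathbb{Q}^\times)^2\neq 0$, while $\Pic_{\mathbb{P}^1/\mathbb{Q}}\cong\underline{\mathbb{Z}}$ admits no nonzero homomorphism from $\mmu_2$, so the right-hand side vanishes. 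The paper only ever invokes the proposition with $\bar k$ as the base field, and your proof (once the Ext input is supplied) does give that version together with its $\Aut(\bar k/k)$-equivariance; but you should say explicitly that this, and not the statement as printed, is what you prove.
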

\begin{proof}
    See \cite[Exposé~XI, Remarques~6.11, p.~309]{SGA1}.
\end{proof}

For two groups $A$ and $B$, we denote by $\Hom_{\mathrm{Grp}}(A, B)$ the group of homomorphisms.

\begin{lemma}\label{lem:first_cohomology_dual}
    We have a canonical isomorphism of $\Aut(\bar{k}/k)$-modules
    \[ 
        H_{\et}^1(X_{\bar{k}}, \mathbb{Z}/n\mathbb{Z})^\dual \cong \frac{\pi^\et_1(X_{\bar{k}}, x)^{\ab}}{n\pi^\et_1(X_{\bar{k}}, x)^{\ab}}, 
    \]
    where $(\cdot)^\dual$ denotes the Pontryagin dual of finite abelian groups, defined by $\Hom_{\mathrm{Grp}}(\cdot, \mathbb{Q}/\mathbb{Z})$.
\end{lemma}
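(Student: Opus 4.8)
The plan is to realize the étale cohomology group as a group of continuous homomorphisms out of the fundamental group, and then to dualize. The starting point is the standard classification of torsors under a finite constant group scheme: for the geometrically connected variety $X_{\bar{k}}$ over the algebraically closed field $\bar{k}$, with geometric base point $x$, there is a canonical isomorphism
\[
    H^1_{\et}(X_{\bar{k}}, \mathbb{Z}/n\mathbb{Z}) \cong \Hom_{\mathrm{cont}}\!\bigl(\pi^\et_1(X_{\bar{k}}, x), \mathbb{Z}/n\mathbb{Z}\bigr),
\]
where $\mathbb{Z}/n\mathbb{Z}$ carries the discrete topology. This comparison is functorial in $X_{\bar{k}}$, hence compatible with the $\Aut(\bar{k}/k)$-action induced by the action on $\bar{k}$; I would cite a standard reference (e.g.\ Milne or SGA1) for both the isomorphism and its naturality.

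Second, I would strip off the algebraic structure of the target. Because $\mathbb{Z}/n\mathbb{Z}$ is abelian, every continuous homomorphism factors through the abelianization $\pi^\et_1(X_{\bar{k}}, x)^{\ab}$; because it is annihilated by $n$, it factors further through the finite quotient $\pi^\et_1(X_{\bar{k}}, x)^{\ab}/n\pi^\et_1(X_{\bar{k}}, x)^{\ab}$. Finiteness here uses that the fundamental group of a variety is topologically finitely generated, so continuity becomes automatic. Writing $M$ for this finite quotient, this yields a natural isomorphism $H^1_{\et}(X_{\bar{k}}, \mathbb{Z}/n\mathbb{Z}) \cong \Hom_{\mathrm{Grp}}(M, \mathbb{Z}/n\mathbb{Z})$. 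Since $M$ is killed by $n$, the inclusion of $\mathbb{Z}/n\mathbb{Z}$ into $\mathbb{Q}/\mathbb{Z}$ as its $n$-torsion subgroup identifies $\Hom_{\mathrm{Grp}}(M, \mathbb{Z}/n\mathbb{Z})$ with $\Hom_{\mathrm{Grp}}(M, \mathbb{Q}/\mathbb{Z}) = M^\dual$, so that $H^1_{\et}(X_{\bar{k}}, \mathbb{Z}/n\mathbb{Z}) \cong M^\dual$.

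Finally, I would take the Pontryagin dual of both sides and invoke biduality for finite abelian groups, obtaining $H^1_{\et}(X_{\bar{k}}, \mathbb{Z}/n\mathbb{Z})^\dual \cong M^{\dual\dual} \cong M$, which is exactly the claimed isomorphism. Each arrow above is canonical, so the composite intertwines the $\Aut(\bar{k}/k)$-actions; the only point needing care is that the Galois action on the left, coming from the functoriality of étale cohomology under the action on $\bar{k}$, matches the action on $M$, coming from the functoriality of $\pi^\et_1$. This follows from the naturality asserted in the first step. I expect the main obstacle to be purely expository, namely pinning down a reference that states the $H^1$–$\pi_1$ comparison together with its equivariance, rather than any genuine mathematical difficulty, since all remaining manipulations are formal facts about finite abelian groups and their Pontryagin duals.
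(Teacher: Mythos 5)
Your proof is correct, but it takes a genuinely different route from the paper. You invoke the classical torsor-theoretic comparison $H^1_{\et}(X_{\bar{k}}, \mathbb{Z}/n\mathbb{Z}) \cong \Hom_{\mathrm{cont}}\bigl(\pi^\et_1(X_{\bar{k}}, x), \mathbb{Z}/n\mathbb{Z}\bigr)$ --- which does hold for all $n$, including $p \mid n$, since $\mathbb{Z}/n\mathbb{Z}$-torsors (e.g.\ Artin--Schreier covers) are finite \'etale --- and then finish by formal Pontryagin duality. The paper instead stays inside its Picard-scheme framework: it applies fppf duality (its Proposition on $H^1_{\mathrm{fppf}}(X,G) \cong \Hom_{k\text{-}\mathrm{Grp}}(G^\dual, \Pic_{X/k})$) to get $H^1_{\et}(X_{\bar{k}}, \mathbb{Z}/n\mathbb{Z}) \cong \Hom_{\bar{k}\text{-}\mathrm{Grp}}(\mmu_n, \Pic X_{\bar{k}})$, factors through $(\Pic^\tau X_{\bar{k}})[n]$, applies Cartier duality, and then uses its earlier proposition (resting on Antei's theorem about Nori's fundamental group scheme) identifying $(\Pic^\tau X_{\bar{k}})[n]^\dual(\bar{k})$ with $\pi^\et_1(X_{\bar{k}}, x)^{\ab}/n$. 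Your argument is more elementary and self-contained; the paper's has the advantage of tying the isomorphism directly to the group scheme $(\Pic^\tau X_{\bar{k}})[n]^\dual$ that its algorithms actually compute, so the equivariance and the computational content come bundled together. Two small points you should nail down: (i) the finiteness of $\pi^\et_1(X_{\bar{k}}, x)^{\ab}/n\pi^\et_1(X_{\bar{k}}, x)^{\ab}$, which you attribute to topological finite generation (SGA~1 for proper varieties, or simply the paper's own structure result for $\pi^\et_1(X_{\bar{k}}, x)^{\ab}$), noting also that $n\pi^\et_1(X_{\bar{k}}, x)^{\ab}$ is automatically closed as the continuous image of a compact group; and (ii) the well-definedness of the $\Aut(\bar{k}/k)$-action on the abelianized fundamental group despite the base-point ambiguity (the outer action becomes an honest action after abelianization), which is the precise content behind your appeal to naturality.
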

\begin{proof}
    Let $\underline{\mathbb{Z}/n\mathbb{Z}}$ be the constant group scheme over $\bar{k}$ associated to $\mathbb{Z}/n\mathbb{Z}$. By \Cref{prop:fppf_duality},
    \[
        H_{\et}^1(X_{\bar{k}}, \mathbb{Z}/n\mathbb{Z}) \cong \Hom_{\bar{k}\text{-}\mathrm{Grp}}(\mmu_n, \Pic X_{\bar{k}}).
    \]
    Since $\mmu_n$ is annihilated by $n$, any homomorphism from $\mmu_n$ to $\Pic X_{\bar{k}}$ factors through $(\Pic^\tau X_{\bar{k}})[n]$. Applying Cartier duality, we obtain
    \begin{align*}
        H_{\et}^1(X_{\bar{k}}, \mathbb{Z}/n\mathbb{Z}) &\cong \Hom_{\bar{k}\text{-}\mathrm{Grp}}(\mmu_n, (\Pic^\tau X_{\bar{k}})[n]) \\
        &\cong \Hom_{\bar{k}\text{-}\mathrm{Grp}}((\Pic^\tau X_{\bar{k}})[n]^\dual, \underline{\mathbb{Z}/n\mathbb{Z}}).
    \end{align*}
    Since $\underline{\mathbb{Z}/n\mathbb{Z}}$ is a constant group scheme, taking $\bar{k}$-valued points yields an isomorphism of abstract groups. Using \Cref{prop:ab_fund_group_dual}, we have
    \begin{align*}
        H_{\et}^1(X_{\bar{k}}, \mathbb{Z}/n\mathbb{Z}) 
        &\cong \Hom_{\mathrm{Grp}}((\Pic^\tau X_{\bar{k}})[n]^\dual(\bar{k}), \mathbb{Z}/n\mathbb{Z})\\
        &\cong \Hom_{\mathrm{Grp}}\!\left( \frac{\pi^\et_1(X_{\bar{k}}, x)^{\ab}}{n\pi^\et_1(X_{\bar{k}}, x)^{\ab}}, \mathbb{Z}/n\mathbb{Z} \right) \\
        &\cong \Hom_{\mathrm{Grp}}\!\left( \frac{\pi^\et_1(X_{\bar{k}}, x)^{\ab}}{n\pi^\et_1(X_{\bar{k}}, x)^{\ab}}, \mathbb{Q}/\mathbb{Z} \right).
    \end{align*}
    All isomorphisms considered here are canonical isomorphisms of $\Aut(\bar{k}/k)$-modules.
\end{proof}

\begin{proposition}\label{prop:compute_fund_group_mod_n}
    For any integer $n > 0$, there exists an algorithm to compute the $\Aut(\bar{k}/ k)$-module 
    \[ 
        \frac{\pi^\et_1(X_{\bar{k}}, x)^{\ab}}{n\pi^\et_1(X_{\bar{k}}, x)^{\ab}}.
    \]
\end{proposition}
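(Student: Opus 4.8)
The plan is to reduce the computation to the algorithms already assembled, using \Cref{prop:ab_fund_group_dual} as the bridge. That proposition supplies a canonical isomorphism of $\Aut(\bar{k}/k)$-modules
\[
  \frac{\pi^\et_1(X_{\bar{k}}, x)^{\ab}}{n\pi^\et_1(X_{\bar{k}}, x)^{\ab}} \cong (\Pic^\tau X_{\bar{k}})[n]^\dual(\bar{k}),
\]
so it suffices to compute the right-hand side as an $\Aut(\bar{k}/k)$-module. Since the formation of both the $n$-torsion subgroup scheme and the Cartier dual commutes with base change, we have $(\Pic^\tau X_{\bar{k}})[n]^\dual = \bigl((\Pic^\tau X)[n]^\dual\bigr)_{\bar{k}}$, and hence it is enough to produce the finite commutative group scheme $(\Pic^\tau X)[n]^\dual$ over $k$ as an explicit Hopf algebra and then feed it to \Cref{hyp:group_at_bar_k}.

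First I would invoke \Cref{thm:Pic_tau} and \Cref{thm:group_structure} to obtain $\Pic^\tau X$ together with its addition $\alpha$, inverse $\iota$, and identity $\epsilon$. Using the procedure recalled just before the statement, I would compute the multiplication-by-$n$ morphism $[n] \colon \Pic^\tau X \to \Pic^\tau X$ by iterating $\alpha$ and the diagonal, and then realize $(\Pic^\tau X)[n]$ as the scheme-theoretic fiber of $[n]$ over the identity section. As a finite, hence projective, group scheme, $(\Pic^\tau X)[n]$ falls under \Cref{lem:hopf_structure}, which returns the finite-dimensional Hopf algebra $A \coloneqq H^0((\Pic^\tau X)[n], \mathcal{O})$ with explicit unit, multiplication, counit, antipode, and comultiplication in a basis. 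The Cartier dual $(\Pic^\tau X)[n]^\dual = \Spec A^\dual$ is then computed by dualizing this data: $A^\dual$ is the dual vector space with the roles of multiplication and comultiplication (and of unit and counit) interchanged, exactly as in the construction of $(\NNS X)_\tor^\dual$ in the proof of \Cref{thm:first_homology}. Finally, applying \Cref{hyp:group_at_bar_k} to the explicit Hopf algebra $A^\dual$ produces the minimal splitting field $L$, the abstract group $(\Pic^\tau X)[n]^\dual(L)$, and the action of $\Aut(L/k)$, which is precisely the desired module.

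The main obstacle is conceptual rather than computational, and it is concentrated in the case $\charac k = p > 0$ with $p \mid n$. Here $(\Pic^\tau X)[n]$ need not be \'etale: its non-reduced structure, coming from the possible non-reducedness of $\Pic^\tau X$ emphasized in the introduction, carries the $p$-power torsion information. Consequently one cannot simply read off the answer from the $\bar{k}$-points of $(\Pic^\tau X)[n]$, since passing to $\bar{k}$-points kills the infinitesimal part and discards exactly the $p$-torsion of interest. The decisive point is that \Cref{prop:ab_fund_group_dual} routes the computation through the Cartier dual: taking $\bar{k}$-points of $(\Pic^\tau X)[n]^\dual$, rather than of $(\Pic^\tau X)[n]$ itself, recovers the full module, because Cartier duality exchanges the infinitesimal and \'etale parts of a finite group scheme. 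Thus the correctness of the algorithm hinges on working scheme-theoretically with the Hopf algebra and its dual throughout, never replacing $(\Pic^\tau X)[n]$ by its reduction; the base-change compatibility noted above guarantees that the group scheme computed over $k$ agrees, after extension to $\bar{k}$, with the object appearing in \Cref{prop:ab_fund_group_dual}.
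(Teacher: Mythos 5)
Your proposal is correct and follows essentially the same route as the paper: compute $\Pic^\tau X$ with its group structure via \Cref{thm:Pic_tau} and \Cref{thm:group_structure}, form $(\Pic^\tau X)[n]$ as the fiber of $[n]$ over the identity, extract its Hopf algebra via \Cref{lem:hopf_structure}, dualize to get the Cartier dual, and apply \Cref{hyp:group_at_bar_k}, with \Cref{prop:ab_fund_group_dual} supplying the identification with $\pi^\et_1(X_{\bar{k}}, x)^{\ab}/n\pi^\et_1(X_{\bar{k}}, x)^{\ab}$. One caveat on your closing remark only: Cartier duality does \emph{not} in general exchange infinitesimal and \'etale parts (e.g.\ $\boldsymbol{\alpha}_p$ is self-dual); it exchanges \'etale and multiplicative parts, but this imprecision is confined to your motivational commentary and does not affect the correctness of the algorithm, which computes exactly the module that \Cref{prop:ab_fund_group_dual} prescribes.
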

\begin{proof}
    The algorithm proceeds as follows. First, we compute $\Pic^\tau X$ and its group structure using \Cref{thm:Pic_tau} and \Cref{thm:group_structure}. Second, we compute the coordinate ring of $(\Pic^\tau X)[n]$ via \Cref{lem:hopf_structure}. Third, we compute its Cartier dual $(\Pic^\tau X)[n]^\dual$ by taking the dual vector space of the coordinate ring. Fourth, we apply \Cref{hyp:group_at_bar_k} to compute the $\Aut(\bar{k}/k)$-module
    \[
        (\Pic^\tau X_{\bar{k}})[n]^\dual(\bar{k}),
    \]
    which yields the desired result by \Cref{prop:ab_fund_group_dual}.
\end{proof}

We now turn to the computation of \'etale cohomology groups. Previously, Madore and Orgogozo \cite[0.6]{MadoreOrgogozo} provided algorithms to compute $H^i(X, \mathbb{Z}/\ell^n \mathbb{Z})$ for all $i$. However, analogous results for $H^i(X, \mathbb{Z}/p^n \mathbb{Z})$ where $p = \charac k > 0$ have not been established. We present an algorithm to compute this for the case $i = 1$.

\firstCohomology
\begin{proof}
    By \Cref{lem:first_cohomology_dual}, it suffices to compute the dual of the finite abelian group obtained in \Cref{prop:compute_fund_group_mod_n}.
\end{proof}

\begin{remark}
    These examples highlight the significance of our results. For both $\pi^\et_1(X_{\bar{k}}, x)^{\ab}$ and $H_{\et}^1(X_{\bar{k}}, \mathbb{Z}/n\mathbb{Z})$, the $p$-power torsion subgroup of the Néron-Severi group has no impact on the results. In contrast, the non-reduced structure of the Picard scheme influences the outcome. Consequently, computing the Picard group alone is insufficient for these applications. It is essential to determine both the full scheme structure and the group structure of the Picard scheme.
\end{remark}

The $p$-power torsion of the N\'eron-Severi group is related to the following result.

\begin{proposition}
    For any integer $n > 0$, there exists an algorithm to compute the $\Aut(\bar{k}/ k)$-module $H^1_{\mathrm{fppf}}(X_{\bar{k}}, \mmu_n)$.
\end{proposition}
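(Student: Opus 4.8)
The plan is to reduce the statement to the computation of the $n$-torsion subgroup scheme $(\Pic^\tau X)[n]$, running the same algorithmic chain that underlies \Cref{prop:compute_fund_group_mod_n} but applied to $(\Pic^\tau X)[n]$ directly rather than to its Cartier dual. First I would invoke \Cref{prop:fppf_duality} with $G = \mmu_n$. Its Cartier dual is the constant group scheme $\mmu_n^\dual \cong \underline{\mathbb{Z}/n\mathbb{Z}}$, so after base change to $\bar{k}$ the canonical, hence $\Aut(\bar{k}/k)$-equivariant, isomorphism of \Cref{prop:fppf_duality} reads
\[
    H^1_{\mathrm{fppf}}(X_{\bar{k}}, \mmu_n) \cong \Hom_{\bar{k}\text{-}\mathrm{Grp}}(\underline{\mathbb{Z}/n\mathbb{Z}}, \Pic X_{\bar{k}}).
\]
A homomorphism out of the constant group scheme $\underline{\mathbb{Z}/n\mathbb{Z}}$ is determined by the image of the generator, which must be an $n$-torsion $\bar{k}$-point of $\Pic X_{\bar{k}}$. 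Since any $n$-torsion line bundle has torsion class in the N\'eron--Severi group and is therefore numerically trivial, the $n$-torsion subgroup scheme satisfies $(\Pic X_{\bar{k}})[n] = (\Pic^\tau X_{\bar{k}})[n]$. This yields the key identification
\[
    H^1_{\mathrm{fppf}}(X_{\bar{k}}, \mmu_n) \cong (\Pic^\tau X_{\bar{k}})[n](\bar{k})
\]
as $\Aut(\bar{k}/k)$-modules.

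The algorithm then proceeds along the established route. I would compute $\Pic^\tau X$ together with its group scheme structure via \Cref{thm:Pic_tau} and \Cref{thm:group_structure}. The multiplication-by-$n$ morphism $[n]$ is obtained by iterating the diagonal and the addition morphism $\alpha$, and $(\Pic^\tau X)[n]$ is realized as its scheme-theoretic kernel, a finite group scheme. Applying \Cref{lem:hopf_structure} to this projective group scheme produces an explicit finite-dimensional Hopf algebra presentation of $(\Pic^\tau X)[n]$. Finally, \Cref{hyp:group_at_bar_k} accepts this explicit finite group scheme as input and returns the full $\Aut(\bar{k}/k)$-module structure on $(\Pic^\tau X_{\bar{k}})[n](\bar{k})$: a finite extension $L/k$ through which the action factors, the resulting action of $\Aut(L/k)$, and the abstract group structure. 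Transporting along the isomorphism above delivers the $\Aut(\bar{k}/k)$-module $H^1_{\mathrm{fppf}}(X_{\bar{k}}, \mmu_n)$.

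The conceptual content is entirely supplied by \Cref{prop:fppf_duality}, and the computational engine is \Cref{hyp:group_at_bar_k}; no genuinely new obstacle appears. The one point meriting care is that the duality isomorphism of \Cref{prop:fppf_duality}, stated over $k$, must remain canonical after base change to $\bar{k}$ so as to be compatible with the Galois action, but this is automatic from its functoriality in $X$ and in $G$. It is worth emphasizing that this computation differs from \Cref{prop:compute_fund_group_mod_n} only in whether one dualizes the Hopf algebra of $(\Pic^\tau X)[n]$ before invoking \Cref{hyp:group_at_bar_k}: the present result uses $(\Pic^\tau X_{\bar{k}})[n]$ itself, whereas the computation of $\pi^\et_1(X_{\bar{k}}, x)^{\ab}/n\pi^\et_1(X_{\bar{k}}, x)^{\ab}$ uses its Cartier dual. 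In particular, this makes transparent how the non-reduced structure of $\Pic^\tau X$ in characteristic $p$ contributes the $p$-part of $H^1_{\mathrm{fppf}}(X_{\bar{k}}, \mmu_n)$, which is precisely the information inaccessible from the prime-to-$p$ \'etale theory.
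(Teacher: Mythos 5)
Your proposal is correct and takes essentially the same route as the paper's proof: invoke \Cref{prop:fppf_duality} with $G = \mmu_n$ to identify $H^1_{\mathrm{fppf}}(X_{\bar{k}}, \mmu_n)$ with the $n$-torsion points $(\Pic X)[n](\bar{k}) = (\Pic^\tau X_{\bar{k}})[n](\bar{k})$, then run the algorithm of \Cref{prop:compute_fund_group_mod_n} with the Cartier-dualization step omitted. You merely make explicit two points the paper leaves implicit, namely that torsion of $\Pic X$ lands in $\Pic^\tau X$ and that homomorphisms out of $\underline{\mathbb{Z}/n\mathbb{Z}}$ over $\bar{k}$ correspond to $n$-torsion $\bar{k}$-points.
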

\begin{proof}
    By \Cref{prop:fppf_duality}, we have
    \begin{align*}
        H^1_{\mathrm{fppf}}(X_{\bar{k}}, \mmu_n) &\cong \Hom_{\bar{k}\text{-}\mathrm{Grp}}(\underline{\mathbb{Z}/n\mathbb{Z}}, \Pic X_{\bar{k}}) \\
        &\cong (\Pic X)[n](\bar{k}).
    \end{align*}
    Therefore, the result is obtained by following the algorithm in \Cref{prop:compute_fund_group_mod_n}, but omitting the steps of taking the Cartier dual.
\end{proof}

\begin{remark}
    The results presented in this section can be applied to other fundamental groups or cohomology theories as well. However, integral crystalline cohomology is an exception due to the pathological nature of its torsion.

    If $X$ is a complex variety, the torsion subgroup $H^2(X, \mathbb{Z})_\tor$ coincides with the torsion subgroup of the N\'eron-Severi group $\NS X$. Analogously, in the context of crystalline cohomology, one might naturally conjecture that the torsion part of the second crystalline cohomology $H^2_{\mathrm{crys}}(X/W)_\tor$ corresponds to the covariant Dieudonn\'e module $\mathbb{D}((\NNS X)_\tor)$ of $(\NNS X)_\tor$. However, as noted in \cite[p.~600]{Illusie}, this is not true in general. Specifically, while $\mathbb{D}((\NNS X)_\tor)$ injects into $H^2_{\mathrm{crys}}(X/W)_\tor$, the latter is generally strictly larger. The torsion elements that are not explained by the Dieudonn\'e module are referred to as \emph{exotic torsion}. See \cite{Oda} for examples exhibiting exotic torsion and \cite{Josh} for cases where exotic torsion vanishes.
\end{remark}
\bibliographystyle{plain}
\bibliography{mybib}

\end{document}